\documentclass[11pt,a4paper]{article}
\usepackage{soul}
\usepackage{amssymb}
\usepackage{dsfont}
\usepackage{graphicx} 
\usepackage{amsmath}
\usepackage{amsthm}
\usepackage{enumitem}   
\usepackage[dvipsnames]{xcolor}
\usepackage{hyperref}
\hypersetup{
    colorlinks=true,
    linkcolor=Green,
    citecolor=Green,
    urlcolor=Cyan
    }
\usepackage{soul}
\usepackage{diagbox}
\usepackage{multirow}
\usepackage{subcaption}
\usepackage[titletoc,title]{appendix}
\usepackage{titlesec}
\setcounter{secnumdepth}{4}
\usepackage{pgfplotstable}
\usepackage{pgfplots}
\usepgfplotslibrary{fillbetween}
\pgfplotsset{compat = newest}

\titleformat{\paragraph}
{\normalfont\normalsize\bfseries}{\theparagraph}{1em}{}
\titlespacing*{\paragraph}
{0pt}{3.25ex plus 1ex minus .2ex}{1.5ex plus .2ex}

\setlength{\topmargin}{-2cm} 
\setlength{\oddsidemargin}{0.1cm}
\setlength{\textwidth}{16.0cm} 
\setlength{\textheight}{24.0cm}
\setlength{\parskip}{6pt} 
\setlength{\parindent}{0pt} 
\frenchspacing
\sloppy

\def\num#1{\numx#1}\def\numx#1e#2{{#1}\mathrm{e}{#2}}
\newcommand*{\collauthor}[2]{{#1}$^{#2}$}

\newcommand*{\affiliation}[2]{$\mbox{}^{{#2}}${#1}}
\newcommand*{\colltitle}[1]{\textbf{#1}}

\newenvironment{keywords}[1]{\vspace{1cm}\\{\bf \slshape{Keywords}}\quad\slshape{#1}}{}
\DeclareMathOperator*{\argmin}{arg\,min}
\newtheorem{example}{Example}[]
\newtheorem{lemma}{Lemma}[section]
\newtheorem{theorem}{Theorem}[section]

\newenvironment{assump}[1]{%
  \assumption
}{\endassumption}

\DeclareMathOperator{\Tr}{Tr}
\DeclareMathOperator{\Hessian}{Hess}

\newcommand{\bfrac}{\mathfrak{b}}
\newcommand{\ifrac}{\mathfrak{i}}

\newcommand{\Kf}{\mathfrak{K}}
\newcommand{\lfrac}{\mathfrak{l}}

\newcommand{\qfrac}{\mathfrak{q}}

\newcommand{\pfrac}{\mathfrak{p}}

\def\num#1{\numx#1}\def\numx#1e#2{{#1}\mathrm{e}{#2}}

\begin{document}
\begin{center}
\begin{Large}
  \colltitle{A backward differential deep learning-based algorithm for solving high-dimensional nonlinear backward stochastic differential equations}
\end{Large}
\vspace*{1.5ex}

\begin{sc}
\begin{large}
\collauthor{Lorenc Kapllani}{}
and
\collauthor{Long Teng}{}
\end{large}
\end{sc}
\vspace{1.5ex}

\affiliation{Chair of Applied and Computational Mathematics,\\
Faculty of Mathematics and Natural Sciences,\\
University of Wuppertal,\\
Gau{\ss}str. 20, 42119 Wuppertal, Germany\linebreak }{} \\
\end{center}

\section*{Abstract}
In this work, we propose a novel backward differential deep learning-based algorithm for solving high-dimensional nonlinear backward stochastic differential equations (BSDEs), where the deep neural network (DNN) models are trained not only on the inputs and labels but also the differentials of the corresponding labels. This is motivated by the fact that differential deep learning can provide an efficient approximation of the labels and their derivatives with respect to inputs. The BSDEs are reformulated as differential deep learning problems by using Malliavin calculus. The Malliavin derivatives of solution to a BSDE satisfy themselves another BSDE, resulting thus in a system of BSDEs. Such formulation requires the estimation of the solution, its gradient, and the Hessian matrix, represented by the triple of processes $\left(Y, Z, \Gamma\right).$ All the integrals within this system are discretized by using the Euler-Maruyama method. Subsequently, DNNs are employed to approximate the triple of these unknown processes. The DNN parameters are backwardly optimized at each time step by minimizing a differential learning type loss function, which is defined as a weighted sum of the dynamics of the discretized BSDE system, with the first term providing the dynamics of the process $Y$ and the other the process $Z$. An error analysis is carried out to show the convergence of the proposed algorithm. Various numerical experiments up to $50$ dimensions are provided to demonstrate the high efficiency. Both theoretically and numerically, it is demonstrated that our proposed scheme is more efficient compared to other contemporary deep learning-based methodologies, especially in the computation of the process $\Gamma$. 
\begin{keywords}
backward stochastic differential equations, high-dimensional problems, deep neural networks, differential deep learning, malliavin calculus, nonlinear option pricing and hedging
\end{keywords}

\section{Introduction}
\label{sec1}
In this paper, we are concerned with the numerical solution of the decoupled forward-backward stochastic differential equation (FBSDE) of the form
\begin{equation}
    \begin{split}
        \left\{
            \begin{array}{rcl}
                X_t & = & x_0 + \int_{0}^{t} a \left(s, X_s\right)\,ds + \int_{0}^{t} b \left(s, X_s\right)\,dW_s,\\
   	   		    Y_t & = & g\left(X_T\right) + \int_{t}^{T} f\left(s, X_s, Y_s, Z_s\right)\,ds -\int_{t}^{T}Z_s\,dW_s,
            \end{array} \forall\, t \in [0, T]
        \right. 
    \end{split}
\label{eq1}
\end{equation}
where $W_t = \left( W_t^1, \ldots, W_t^d \right)^\top$ is a $d$-dimensional Brownian motion, $a: [0, T] \times \mathbb{R}^{d} \to \mathbb{R}^{d}$, $b: [0, T] \times \mathbb{R}^{d} \to \mathbb{R}^{d \times d}$, $f:\left[0,T\right]\times\mathbb{R}^{d}\times\mathbb{R}\times\mathbb{R}^{1\times d} \to\mathbb{R}$ is the driver function and $g: \mathbb{R}^{d} \to \mathbb{R}$ is the terminal condition which depends on the final value $X_T$ of the forward stochastic differential equation (SDE). Hence, the randomness in the backward stochastic differential equation (BSDE) is driven by the forward SDE. Usually, the coupled FBSDE is referred to as a FBSDE. Hence, to avoid confusion, we refer to the decoupled FBSDE~\eqref{eq1} as a BSDE. We shall work under the standard well-posedness assumptions of~\cite{Pardoux1990} to ensure the existence of a unique solution pair of~\eqref{eq1}.

The main motivation for studying BSDEs lies in their significance as essential tools for modeling problems across various scientific domains, including finance, economics, physics, etc., due to their connection to partial differential equations (PDEs) through the well-known (nonlinear) Feynman-Kac formula. As an illustrative example of their applications in finance, it was demonstrated in~\cite{El1997} that the price and delta hedging of an option can be represented by a BSDE. Such an approach via a BSDE has a couple of advantages when compared with the usual one of considering the associated PDE. Firstly, the delta hedging strategy is inclusive in the BSDE solution. Secondly, many market models can be presented in terms of BSDEs, ranging from the Black-Scholes model to more advanced ones such as local volatility models~\cite{labart2011parallel}, stochastic volatility models~\cite{Fahim_2011}, jump-diffusion models~\cite{Eyraud_Loisel_2005}, defaultable options~\cite{ANKIRCHNER_2010}, and many others. Thirdly, BSDEs can also be used in incomplete markets~\cite{El1997}. Furthermore, using BSDEs  eliminates the need to switch to the so-called risk-neutral measure. Therefore, BSDEs represent a more intuitive and understandable approach for option pricing and hedging.

Under the Black-Scholes framework, such a BSDE is linear and the solution is given in a closed form. However, in most practical scenarios, BSDEs cannot be explicitly solved. For instance, the Black-Scholes model under different interest rates for lending and borrowing~\cite{bergman1995option} leads to a nonlinear BSDE for which finding an analytical solution becomes challenging. Hence, advanced numerical techniques to approximate their solutions become desired. In recent years, various numerical methods have been proposed for solving BSDEs, e.g., \cite{bouchard2004discrete,zhang2004numerical,gobet2005regression,lemor2006rate,zhao2006new,bender2008,ma2008numerical,zhao2010stable,gobet2010solving,crisan2012solving,zhao2014new,ruijter2015fourier,ruijter2016fourier,teng2020multi,teng2021high} and many others. However, most of them are not suitable for tackling high-dimensional BSDEs due to the well-recognized challenge known as the ``curse of dimensionality". The computational cost associated with solving high-dimensional BSDEs grows exponentially with the increase in dimensionality. Some of the most important equations are naturally formulated in high dimensions. For instance, the Black-Scholes equation for option pricing exhibits the dimensionality of the BSDE with the number of underlying financial assets under consideration. Some techniques such as parallel computing using GPU computing~\cite{gobet2016stratified, kapllani2022multistep} or sparse grid methods~\cite{zhang2013sparse,fu2017efficient,chassagneux2023learning} have proven effective in solving only moderately dimensional BSDEs within reasonable computation time.

In recent years, machine learning models have demonstrated remarkable success in the field of artificial intelligence, inspiring applications in other domains where the curse of dimensionality has been a persistent challenge. Consequently, different approaches using machine learning have been proposed to solve high-dimensional BSDEs: the deep learning-based methods using deep neural networks (DNNs) and the regression tree-based methods~\cite{teng2021review,teng2022gradient}. The first deep learning-based scheme called the deep BSDE (we refer to it as the DBSDE scheme), was introduced in~\cite{weinan2017deep,han2018solving}. The authors conducted numerical experiments with various examples, demonstrating the effectiveness of their proposed algorithm in high-dimensional settings. It proved proficient in delivering both accurate approximations of the solution and computational efficiency. Therefore, the method opened the door to solving BSDEs in hundreds of dimensions in a reasonable amount of time. Several articles have been published after the original publication of the DBSDE method, some adjusting, reformulating, or extending the algorithm~\cite{wang2018deep,fujii2019asymptotic,ji2020three,hure2020deep,kremsner2020deep,pereira2019learning,beck2021deep,chen2021deep,liang2021deep,ji2021novel,pham2021neural,takahashi2022new,germain2022approximation,gnoatto2022deep,ji2022deep,kapllani2022effect,abbas2022pathwise,andersson2023convergence,gnoatto2023deep,raissi2024forward,kapllani2024deep,negyesi2024one}, while others focused on error analysis~\cite{han2020convergence,jiang2021convergence,negyesi2024generalized} and uncertainty quantification~\cite{kapllaniuncertainty}. It has been pointed out in the literature that the DBSDE method suffers from different issues such as convergence to an approximation far from the solution or even divergence when the problem has a complex structure and a long terminal time. To tackle these drawbacks, many alternative methods have been proposed, we refer to, e.g.,~\cite{hure2020deep,chassagneux2023learning,teng2022gradient,andersson2023convergence,kapllani2024deep}. High-accurate gradient approximations are of great significance, especially in financial applications, where the process $Z$ represents the hedging strategy for an option contract. Except the works in~\cite{kapllani2024deep,negyesi2024one}, other deep learning-based schemes does not discuss in detail the approximations for $Z$ in high-dimensional spaces, as it is generally more challenging than approximating $Y$ for BSDEs. In this work, we develop a novel algorithm that ensures high accuracy not only for the process $Y$ but also for the process $Z$.

The authors in~\cite{hure2020deep} approximate the unknown solution pair of~\eqref{eq1} using DNNs. The network parameters are optimized at each time step through the minimization of loss functions defined recursively via backward induction. More precisely, the loss is formulated from the Euler-Maruyama discretization of the BSDE at each time interval. Such formulation gives an implicit approximation of the process $Z$. Hence, the stochastic gradient descent (SGD) algorithm lacks explicit information about $Z$, which impacts its approximation accuracy. To address this, we enhance the SGD algorithm by providing it with additional information to achieve accurate approximations of $Z$. We make use of differential deep learning~\cite{huge2020differential}, a general extension of supervised deep learning. In this framework, the DNN model is trained not only on inputs and labels but also on differentials of labels with respect to (w.r.t.) inputs. Differential deep learning offers an efficient approximation not only of the labels but also of their derivatives when compared to traditional supervised deep learning. We use Malliavin calculus to formulate the BSDE problem as a differential deep learning problem. By applying the Malliavin derivative to a BSDE, the Malliavin derivatives of the solution pair $(Y, Z)$ of the BSDE satisfy themselves another BSDE, resulting thus in a system of BSDEs. This formulation also requires estimating the Hessian matrix of the solution. In the context of option pricing, this matrix corresponds to $\Gamma$ sensitivity, which can be used to indicate a potential acceleration in changes in the option's value. 

Our method works as follows. Firstly, we discretize the system of BSDEs using the Euler-Maruyama method. Subsequently, we utilize DNNs to approximate the unknown solution of these BSDEs, requiring the estimation of the triple of the processes $\left(Y, Z, \Gamma\right)$. The network parameters are optimized backwardly at each time step by minimizing a loss function defined as a weighted sum of the dynamics of the discretized BSDE system. Through this way, SGD is equipped with explicit information about the dynamics of the process $Z$. As a result, our method can yield more accurate approximations than the scheme proposed in~\cite{hure2020deep} not only for the process $Z$, but also for the process $\Gamma$. Moreover, the computation time of our scheme is shorter in comparison to~\cite{hure2020deep} when including the computation of $\Gamma$ at each optimization step. This is because the latter requires the use of automatic differentiation (AD) to estimate the process $\Gamma$, whereas in our method, we approximate it using a DNN, which is more efficient in terms of time consumption, as we demonstrate in our numerical experiments. Note that the authors in~\cite{negyesi2024one} also used the Malliavin derivative to improve the accuracy of $Z$. However, their method significantly differs from ours, as they only employ supervised deep learning. Their approach requires training the BSDE system separately, which can be expected to have a higher computational cost compared to our method. Furthermore, our approach using differential deep learning can be straightforwardly extended not only to~\cite{hure2020deep}, which operates backward in time through local optimization at each discrete time step, but also to other deep learning-based schemes~\cite{weinan2017deep,raissi2024forward,kapllani2024deep} formulated forward in time as a global optimization problem (this is part of our ongoing research). In contrast, the scheme presented in~\cite{negyesi2024one} cannot be integrated into such methodologies, as it cannot be formulated as a global optimization problem. To the best of our knowledge, only~\cite{lefebvre2023differential} applies differential deep learning to solve high-dimensional PDEs, where the authors consider the associated dual stochastic control problem instead of working with BSDEs.

The outline of the paper is organized as follows. In the next section, we recall some of the well-known results concerning BSDEs. In Section~\ref{sec3}, DNNs and differential deep learning techniques are described. Our backward differential deep learning-based algorithm is presented in Section~\ref{sec4}. Section~\ref{sec5} is devoted to the convergence analysis of our algorithm. The numerical experiments presented in Section~\ref{sec6} confirm the theoretical results and show high accuracy of the solution, its gradient, and the Hessian matrix of the solution over different option pricing problems. Finally, Section~\ref{sec7} concludes this work.

\section{Preliminaries}
\label{sec2}
\subsection{Spaces and notation}
\label{subsec21}
Let $\left(\Omega,\mathcal{F},\mathbb{P},\{\mathcal{F}_t\}_{0\le t \le T}\right)$ be a complete, filtered probability space. In this space a standard $d$-dimensional Brownian motion $\{W_t\}_{\leq t \leq T}$ is defined, such that the filtration $\{\mathcal{F}_t\}_{0\le t\le T}$ is the natural filtration of $W_t.$ As usual, we identify random variables that are equal $\mathbb{P}$-a.s. and accordingly, understand equalities and inequalities between them in the $\mathbb{P}$-a.s. sense. For the expectation, we omit the superscript $\mathbb{P}$ if it is meant under probability measure $\mathbb{P}$ (unless stated otherwise). We denote further
\begin{itemize}
    \item $x \in \mathbb{R}^d$ as a column vector. $x \in \mathbb{R}^{1\times d}$ as a row vector. 
    \item $| x |$ for the Frobenius norm of any $x \in \mathbb{R}^{d \times \qfrac}$. In the case of scalar and vector inputs, these coincide with the standard Euclidian norm. 
    \item $\mathbb{S}^2\left([0, T] \times \Omega; \mathbb{R}^{d\times \qfrac} \right)$ for the space of continuous and progressively measurable stochastic processes $X: [0, T] \times \Omega \to \mathbb{R}^{d\times \qfrac} $ such that $\mathbb{E}\bigl[\sup_{ 0 \leq t\leq T}\left|X_t\right|^2\bigr] < \infty$.
    \item $\mathbb{H}^2\left([0, T] \times \Omega; \mathbb{R}^{d\times \qfrac} \right)$ for the space of progressively measurable stochastic processes $Z: [0, T] \times \Omega \to \mathbb{R}^{d\times \qfrac} $ such that $\mathbb{E}\left[ \int_0^T \left|Z_t\right|^2 \, dt \right] < \infty$.
    \item $\mathbb{L}^2_{\mathcal{F}_t}\left(\Omega; \mathbb{R}^{d \times \qfrac} \right)$ for the space of $\mathcal{F}_t$-measurable random variable $\xi: \Omega \to \mathbb{R}^{d \times \qfrac} $ such that $\mathbb{E}\bigl[\left|\xi\right|^2 \bigr] < \infty$.
    \item $H^2\left( [0, T] ; \mathbb{R}^{\qfrac}\right)$ for the Hilbert space of deterministic functions $h: [0, T] \to \mathbb{R}^{\qfrac}$ such that $\int_{0}^T \left| h\left(t\right) \right|^2 dt < \infty$.
    \item $\nabla_x f := \left( \frac{\partial f}{\partial x_1}, \ldots, \frac{\partial f}{\partial x_d} \right) \in \mathbb{R}^{1 \times d}$ for the gradient of scalar-valued multivariate function $f\left(t, x, y, z\right)$ w.r.t. $x \in \mathbb{R}^{d}$, and analogously for $\nabla_y f \in \mathbb{R}$ and $\nabla_z f \in \mathbb{R}^{1 \times d}$ w.r.t. $y \in \mathbb{R}$ and $z \in \mathbb{R}^{1 \times d}$, respectively. Similarly, we denote the Jacobian matrix of a vector-valued function $u: \mathbb{R}^d \to \mathbb{R}^{\qfrac}$ by $\nabla_x u \in \mathbb{R}^{\qfrac \times d}$.
    \item $\Hessian_x u \in \mathbb{R}^{d \times d}$ the Hessian matrix of a function $u: \mathbb{R}^d \to \mathbb{R}$.
    \item $C^{\lfrac}_{\bfrac}\left( \mathbb{R}^{d}; \mathbb{R}^{\qfrac} \right)$ and $C^{\lfrac}_{\pfrac}\left( \mathbb{R}^{d}; \mathbb{R}^{\qfrac} \right)$ for the set of $\lfrac$-times continuously differentiable functions $\varphi: \mathbb{R}^d \to \mathbb{R}^{\qfrac}$ such that all partial derivatives up to order $\lfrac$ are bounded or have polynomial growth, respectively.
    \item $\Delta = \{t_0, t_1, \ldots, t_N\}$ is the time discretization of $[0, T]$ with $t_0 = 0 < t_1 < \ldots < t_N = T$, $\Delta t_n = t_{n+1} - t_n$ and $| \Delta | := \max_{ 0\leq n \leq N-1 } t_{n+1} - t_n$.
    \item $\mathbb{E}_n\left[ Y \right]:=\mathbb{E}\left[ Y | \mathcal{F}_{t_n} \right]$ for the conditional expectation w.r.t. the natural filtration, given the time partition $\Delta$.
    \item $x^{\top}\in \mathbb{R}^{d \times \qfrac}$ for the transpose of any $x \in \mathbb{R}^{d \times \qfrac}$.
    \item $\Tr\left[x\right]$ for the trace of any $x \in \mathbb{R}^{d \times d}$.
    \item $\mathbf{0}_{d,d}$, $\mathbf{1}_{d,d}$ for $\mathbb{R}^{d \times d}$ matrices of all zeros and ones, respectively.
\end{itemize}
\subsection{Malliavin calculus}
\label{subsec22}
We shall use techniques of the stochastic calculus of variations. To this end, we use the following notation. For more details, we refer the reader to~\cite{nualart2006malliavin}. Let $\mathcal{S}$ be the space of smooth random variables of the form 
$$\xi = \varphi \left( \int_0^T h_1(t) dW_t, \ldots, \int_0^T h_d(t) dW_t\right)$$
where $\varphi \in C^{\infty}_{\pfrac}\left(\mathbb{R}^d;\mathbb{R}\right)$, $h_1, \ldots, h_d \in H^2\left( [0, T] ; \mathbb{R}^{\qfrac}\right)$. The Malliavin derivative of smooth random variable $\xi \in \mathcal{S}$ is the $\mathbb{R}^{1 \times \qfrac}$-valued stochastic process given by
$$
D_s \xi : = \sum_{k=1}^d \frac{\partial \varphi}{x_k}\left(  \int_0^T h_1(t) dW_t, \ldots, \int_0^T h_d(t) dW_t \right) h_k(s).
$$
We define the domain of $D$ in $\mathbb{L}^2_{\mathcal{F}_T}$ as $\mathbb{D}^{1,2}\left( \Omega;\mathbb{R} \right)$, meaning that $\mathbb{D}^{1,2}$ is the closure of the class of smooth random variables $\mathcal{S}$  w.r.t. the norm
$$
\| \xi  \|_{\mathbb{D}^{1,2}}:= \left( \mathbb{E}\left[  \left| \xi \right|^2 + \int_{0}^T \left| D_s \xi \right|^2 ds \right] \right)^{\frac{1}{2}}.
$$
Note that in case of vector valued Malliavin differentiable random variables $\xi = \left( \xi_1, \ldots, \xi_{\qfrac} \right)$, $\xi \in \mathbb{D}^{1,2}\left(\Omega; \mathbb{R}^{\qfrac} \right)$, its Malliavin derivative $D_s \xi \in \mathbb{R}^{\qfrac \times \qfrac}$ is the matrix-valued stochastic process. 

The following lemma represents the Malliavin chain rule, which can be extended to Lipschitz continuous functions.
\begin{lemma} 
(Malliavin chain rule~\cite{nualart2006malliavin})\\
Let $F \in C^{1}_{\bfrac}\left( \mathbb{R}^d; \mathbb{R}^{\qfrac} \right)$. Suppose that $\xi \in \mathbb{D}^{1, 2}\left( \Omega; \mathbb{R}^d \right)$. Then $F\left(\xi\right) \in \mathbb{D}^{1,2}\left( \Omega; \mathbb{R}^{\qfrac} \right)$, and for each $0 \leq s \leq T$
\begin{equation*}    
    D_s F (\xi) = \nabla_x F (\xi) D_s \xi.
\end{equation*}
\label{lemma1}
\end{lemma}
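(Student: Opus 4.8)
The plan is to build the identity up in three stages, passing from the generating class $\mathcal{S}$ to all of $\mathbb{D}^{1,2}$ and from smooth $F$ to general $F \in C^1_{\bfrac}$, closing the argument each time with the closability of the operator $D$. I take the closability of $D$ (a foundational property of the calculus of variations) as known.

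First I would treat the elementary case in which $\xi \in \mathcal{S}$ and $F$ is additionally smooth with bounded derivatives. Writing $\xi = \varphi\bigl(\int_0^T h_1(t)\,dW_t, \ldots, \int_0^T h_d(t)\,dW_t\bigr)$ with $\varphi \in C^\infty_{\pfrac}$, the composition $F \circ \varphi$ is again smooth with derivatives of polynomial growth, so $F(\xi)$ lies componentwise in $\mathcal{S}$ and its Malliavin derivative can be read off directly from the definition. Applying the ordinary finite-dimensional chain rule to $\partial_{x_k}(F\circ\varphi)$ and collecting the factors $h_k(s)$ gives exactly $D_s F(\xi) = \nabla_x F(\xi)\,D_s\xi$, where the product pairs the Jacobian $\nabla_x F(\xi) \in \mathbb{R}^{\qfrac \times d}$ with $D_s\xi \in \mathbb{R}^{d \times \qfrac}$. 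This stage is pure bookkeeping once the dimensions are lined up.

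Next I would remove the smoothness of $F$. Given $F \in C^1_{\bfrac}$, I would mollify by convolution to obtain $F_m \in C^\infty$ with $F_m \to F$ and $\nabla_x F_m \to \nabla_x F$ locally uniformly, while retaining the uniform bound $\sup_m \|\nabla_x F_m\|_\infty \leq \|\nabla_x F\|_\infty$. For fixed $\xi \in \mathcal{S}$, the first stage yields $D_s F_m(\xi) = \nabla_x F_m(\xi)\,D_s \xi$. Since a single smooth random variable has finite moments of every order and $D_s\xi \in \mathbb{H}^2$, dominated convergence gives $F_m(\xi) \to F(\xi)$ in $L^2(\Omega)$ and $\nabla_x F_m(\xi)\,D_s \xi \to \nabla_x F(\xi)\,D_s \xi$ in $\mathbb{H}^2$. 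Closability of $D$ then places $F(\xi)$ in $\mathbb{D}^{1,2}$ with $D_s F(\xi) = \nabla_x F(\xi)\,D_s\xi$, so the claim holds for all $\xi \in \mathcal{S}$.

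Finally I would pass from $\xi \in \mathcal{S}$ to general $\xi \in \mathbb{D}^{1,2}$. By definition of $\mathbb{D}^{1,2}$ as the closure of $\mathcal{S}$, choose $\xi_n \in \mathcal{S}$ with $\xi_n \to \xi$ in $\|\cdot\|_{\mathbb{D}^{1,2}}$, that is $\xi_n \to \xi$ in $L^2(\Omega)$ and $D_s\xi_n \to D_s\xi$ in $\mathbb{H}^2$. The previous stage gives $D_s F(\xi_n) = \nabla_x F(\xi_n)\,D_s \xi_n$, and Lipschitz continuity of $F$ (a consequence of the boundedness of $\nabla_x F$) gives $F(\xi_n) \to F(\xi)$ in $L^2(\Omega)$. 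For the derivative side I would split
\begin{equation*}
\nabla_x F(\xi_n)\,D_s\xi_n - \nabla_x F(\xi)\,D_s\xi = \nabla_x F(\xi_n)\bigl(D_s\xi_n - D_s\xi\bigr) + \bigl(\nabla_x F(\xi_n) - \nabla_x F(\xi)\bigr)D_s\xi;
\end{equation*}
the first term vanishes in $\mathbb{H}^2$ because $\nabla_x F$ is bounded and $D_s\xi_n \to D_s\xi$, while the second vanishes along an a.s.-convergent subsequence of $(\xi_n)$ by continuity and boundedness of $\nabla_x F$ together with dominated convergence. Invoking closability of $D$ once more identifies $F(\xi) \in \mathbb{D}^{1,2}$ with derivative $\nabla_x F(\xi)\,D_s\xi$. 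The main obstacle is precisely this last $\mathbb{H}^2$-convergence of the product together with the closability argument that lets one read off the limit as the Malliavin derivative; every other step is a controlled $L^2$ limit.
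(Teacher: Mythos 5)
The paper does not prove this lemma---it is quoted verbatim from Nualart and used as a black box---so there is no in-paper argument to compare against. Your three-stage proof (exact computation on $\mathcal{S}$ for smooth $F$, mollification of $F$ with the uniform gradient bound, then density of $\mathcal{S}$ in $\mathbb{D}^{1,2}$ with the split of $\nabla_x F(\xi_n)D_s\xi_n - \nabla_x F(\xi)D_s\xi$ and closability of $D$ at each limit) is correct and is essentially the standard textbook proof in the cited reference, so nothing further is needed.
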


\subsection{Some results on BSDEs}
\label{subsec23}
We recall some results on BSDE known from the literature that are relevant for this work. For the functions in BSDE~\eqref{eq1}, we hierarchically structure the properties that they are assumed to fulfill.
\begin{assump}{AX1}
    The initial condition $x_0 \in \mathbb{L}^2_{\mathcal{F}_0}\left(\Omega; \mathbb{R}^{d}\right)$ and $a, b$ satisfy a linear growth condition in $x$, i.e.,
    $$ \left| a\left(t, x\right) \right| + \left| b\left(t, x\right) \right| \leq C \left( 1+\left| x\right|\right),$$
    $\forall \,  t \in [0, T], x \in \mathbb{R}^{d}$ and some constant $C>0$. Furthermore, $a, b$ are uniformly Lipschitz continuous in the spatial variable, i.e.,
    \begin{equation*}
        \left| a\left(t, x_1\right)  - a\left(t, x_2\right) \right| + \left| b\left(t, x_1\right)  - b\left(t, x_2\right) \right| \leq L_{a,b} \left| x_1 - x_2 \right|
    \end{equation*}
    $\forall \,  t \in [0, T], x_1, x_2 \in \mathbb{R}^{d}$, for some constant $L_{a,b}>0$.
    \label{AX1}
\end{assump}
\begin{assump}{AX2}
    Assumption~\ref{AX1} holds. Moreover, $a(t, 0)$, $b(t, 0)$ are uniformly bounded $\forall$ $0 \leq t \leq T$ and $a \in C_{\bfrac}^{0, 1}\left( [0, T] \times \mathbb{R}^{d}; \mathbb{R}^{d} \right)$, $b \in C_{\bfrac}^{0, 1}\left( [0, T] \times \mathbb{R}^{d}; \mathbb{R}^{d \times d} \right)$.
    \label{AX2}
\end{assump}
\begin{assump}{AX3}
    Assumption~\ref{AX2} holds. Moreover, $a \in C_{\bfrac}^{0, 2}\left( [0, T] \times \mathbb{R}^{d}; \mathbb{R}^{d} \right)$, $b \in C_{\bfrac}^{0, 2}\left( [0, T] \times \mathbb{R}^{d}; \mathbb{R}^{d \times d} \right)$ and there exist and positive constant $C>0$ such that
     $$v^{\top} b(t, x) b(t, x)^{\top} v \geq C |v|^2, \quad x, v \in \mathbb{R}^d, t \in [0, T].$$    
    \label{AX3}
\end{assump}
\begin{assump}{AY1}
    The function $f(t,x,y,z)$ is uniformly Lipschitz continuous w.r.t $y$ and $z$, i.e.
    \begin{equation*}    
        \left| f\left(t, x, y_1, z_1\right) - f\left(t, x, y_2, z_2\right)\right| \leq L_{f}\left( \left| y_1 - y_2 \right| + \left| z_1 - z_2 \right| \right),
    \end{equation*}
    $\forall \,  (t, x, y_1, z_1)$ and $(t, x, y_2, z_2) \in [0, T] \times \mathbb{R}^d \times \mathbb{R} \times \mathbb{R}^{1\times d}$, for some constant $L_{f}>0$. Moreover, $f, g$ satisfy a polynomial growth condition in $x$,i.e.,
    \begin{equation*}    
        \left| f\left(t, x, y, z\right) \right| + \left| g\left(x\right) \right| \leq C\left( 1 + \left|x\right|^2 \right),
    \end{equation*}
    $\forall \,  (t, x, y, z) \in [0, T] \times \mathbb{R}^d \times \mathbb{R} \times \mathbb{R}^{1\times d}$ for some constant $C>0$.
    \label{AY1}
\end{assump}
\begin{assump}{AY2}
    Assumption~\ref{AY1} holds. Moreover, $f \in C_{\bfrac}^{0,1,1,1}\left( [0, T] \times \mathbb{R}^{d} \times \mathbb{R} \times \mathbb{R}^{1 \times d}; \mathbb{R}\right)$ and $g \in C_{\bfrac}^1\left( \mathbb{R}^{d}; \mathbb{R} \right)$.
    \label{AY2}
\end{assump}
\begin{assump}{AY3}
    Assumption~\ref{AY2} holds. Moreover, $f \in C_{\bfrac}^{0,2,2,2}\left( [0, T] \times \mathbb{R}^{d} \times \mathbb{R} \times \mathbb{R}^{1 \times d}; \mathbb{R}\right)$ and $g \in C_{\bfrac}^2\left( \mathbb{R}^{d}; \mathbb{R} \right)$.
    \label{AY3}
\end{assump}

In the following theorem, we state the well-known result on SDEs.
\begin{theorem}(Moment Estimates for SDEs~\cite{kloeden2013numerical})\\
    Assume that Assumption~\ref{AX1} holds. Then the SDE in~\eqref{eq1} has a unique strong solution $\{ X_t \}_{0\leq t \leq T} \in \mathbb{S}^2\left([0, T] \times \Omega; \mathbb{R}^{d} \right)$ and the following moment estimates hold:
    \begin{equation*}
        \mathbb{E}\left[ \sup_{0 \leq t \leq T} \left| X_t \right|^2 \right] \leq C, \quad \mathbb{E}\left[ \sup_{s \leq r \leq t}\left| X_r - X_s \right|^2 \right] \leq C \left| t - s \right|,
    \end{equation*}
    where constant $C$ depends only on $T, d$. 
    \label{theorem1}
\end{theorem}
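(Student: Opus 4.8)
The plan is to establish existence and uniqueness first by a Picard fixed-point argument, and then to derive the two moment estimates by combining the Burkholder--Davis--Gundy (BDG) inequality with Grönwall's lemma, with a localization argument supplying the a priori integrability needed to start Grönwall.

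First I would prove existence and uniqueness. Define the Picard iterates $X^{(0)}_t \equiv x_0$ and
$$ X^{(n+1)}_t = x_0 + \int_0^t a(s, X^{(n)}_s)\,ds + \int_0^t b(s, X^{(n)}_s)\,dW_s, $$
and show that the map $\Phi$ sending a process to the right-hand side is a contraction on $\mathbb{H}^2\left([0,T]\times\Omega;\mathbb{R}^d\right)$ equipped with a suitably weighted norm $\|X\|_\beta^2 = \mathbb{E}\left[\int_0^T e^{-\beta t}|X_t|^2\,dt\right]$. Using $(u+v+w)^2 \le 3\left(|u|^2+|v|^2+|w|^2\right)$, the Cauchy--Schwarz inequality on the drift integral, the It\^o isometry on the stochastic integral, and the Lipschitz bound from Assumption~\ref{AX1}, one obtains $\|\Phi(X)-\Phi(X')\|_\beta^2 \le (C/\beta)\|X-X'\|_\beta^2$, which is a strict contraction for $\beta$ large. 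The Banach fixed-point theorem then yields the unique solution; continuity of paths follows from the continuity of the two integrals, and membership in $\mathbb{S}^2$ is furnished by the first moment estimate below.

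Next, to get $\mathbb{E}\left[\sup_{0\le t\le T}|X_t|^2\right]\le C$, I would introduce the stopping times $\tau_M = \inf\{t\ge 0: |X_t|\ge M\}\wedge T$ so that the stopped process is bounded and all expectations below are a priori finite. Setting $\phi_M(t) = \mathbb{E}\left[\sup_{0\le r\le t\wedge\tau_M}|X_r|^2\right]$, I apply $(u+v+w)^2\le 3\left(|u|^2+|v|^2+|w|^2\right)$ to the SDE, Cauchy--Schwarz to the drift term, and the BDG (or Doob's $L^2$-maximal) inequality together with the It\^o isometry to the stochastic term; the linear growth condition $|a|^2+|b|^2\le C\left(1+|x|^2\right)$ then produces
$$ \phi_M(t) \le C\Big(1 + \mathbb{E}\big[|x_0|^2\big] + \int_0^t \phi_M(s)\,ds\Big). $$
Grönwall's lemma gives $\phi_M(t)\le C e^{Ct}$ with $C$ independent of $M$, and letting $M\to\infty$ by monotone convergence yields the first estimate with a constant depending only on $T,d$ (and the structural constants, $\mathbb{E}[|x_0|^2]$ being absorbed).

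Finally, for the increment bound I would write $X_r - X_s = \int_s^r a\,du + \int_s^r b\,dW$, apply $(u+v)^2\le 2\left(|u|^2+|v|^2\right)$, bound the drift by Cauchy--Schwarz via $\left|\int_s^r a\,du\right|^2\le |t-s|\int_s^t|a|^2\,du$ and the diffusion by BDG, and then use linear growth together with the already-established bound $\mathbb{E}\left[\sup_u|X_u|^2\right]\le C$ to obtain $\mathbb{E}\left[\int_s^t\left(1+|X_u|^2\right)du\right]\le C|t-s|$; combining these gives the $C|t-s|$ estimate. The main obstacle is the standard-but-delicate point that Grönwall's lemma cannot be invoked before knowing $\phi(t)<\infty$; the localization by $\tau_M$ circumvents this, and the essential bookkeeping is to verify that the resulting constant is independent of $M$ so that the limit survives.
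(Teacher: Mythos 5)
The paper gives no proof of this theorem; it is stated as a classical result cited from Kloeden and Platen, so there is nothing in the text to compare your argument against. Your proposal is the standard (and correct) proof that the cited reference itself uses --- Picard iteration in a weighted $\mathbb{H}^2$ norm for existence and uniqueness, then localization by $\tau_M$, the Burkholder--Davis--Gundy and Cauchy--Schwarz inequalities, the linear growth condition, and Gr\"onwall's lemma for the two moment estimates --- and the one point worth flagging is cosmetic rather than substantive: as you note, the constant also depends on the Lipschitz/growth constants and on $\mathbb{E}\left[ \left| x_0 \right|^2 \right]$, which the theorem's phrasing ``depends only on $T, d$'' silently absorbs.
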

The well-posedness of the BSDE~\eqref{eq1} is guaranted by Assumption~\ref{AY1}. The following theorem guarantes the existence of a unique solution triple of~\eqref{eq1}.
\begin{theorem}(Properties of BSDEs~\cite{El1997})\\
    Assume that Assumptions~\ref{AX1} and~\ref{AY1} holds. Then the BSDE~\eqref{eq1} admits a unique solution triple $\{X_t, Y_t, Z_t \}_{0\leq t \leq T} \in \mathbb{S}^2\left([0, T]\times \Omega; \mathbb{R}^{d} \right) \times \mathbb{S}^2\left( [0, T]\times \Omega;\mathbb{R} \right) \times \mathbb{H}^2\left([0, T]\times \Omega; \mathbb{R}^{1 \times d} \right)$.
    \label{theorem2}
\end{theorem}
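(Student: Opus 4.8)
The plan is to treat the forward and backward parts separately. Existence, uniqueness and square-integrability of $\{X_t\}_{0\leq t \leq T} \in \mathbb{S}^2\left([0,T]\times\Omega;\mathbb{R}^d\right)$ are already supplied by Theorem~\ref{theorem1}, so the entire task reduces to constructing the pair $(Y,Z)$ solving the backward equation once the path of $X$ is regarded as a fixed square-integrable input. First I would verify the data integrability: the polynomial growth of $g$ and $f$ in Assumption~\ref{AY1}, combined with the moment bounds for $X$ stemming from Assumption~\ref{AX1}, show that $g(X_T) \in \mathbb{L}^2_{\mathcal{F}_T}\left(\Omega;\mathbb{R}\right)$ and $\mathbb{E}\bigl[\int_0^T |f(s,X_s,0,0)|^2\,ds\bigr] < \infty$, which is exactly what is needed for the fixed-point space to contain the relevant data.

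The core of the argument is a Picard/Banach fixed-point construction on the Hilbert space $\mathbb{H}^2\left([0,T]\times\Omega;\mathbb{R}\right)\times\mathbb{H}^2\left([0,T]\times\Omega;\mathbb{R}^{1\times d}\right)$ equipped with the weighted norm $\|(y,z)\|_\beta^2 := \mathbb{E}\bigl[\int_0^T e^{\beta t}\bigl(|y_t|^2 + |z_t|^2\bigr)\,dt\bigr]$ for a parameter $\beta>0$ to be fixed later. Given $(y,z)$ in this space I would freeze the driver and solve the resulting equation with a non-interacting generator: setting $M_t := \mathbb{E}\bigl[g(X_T) + \int_0^T f(s,X_s,y_s,z_s)\,ds \,\big|\, \mathcal{F}_t\bigr]$, the martingale representation theorem for the Brownian filtration yields a unique $Z \in \mathbb{H}^2$ with $M_t = M_0 + \int_0^t Z_s\,dW_s$, and then $Y_t := \mathbb{E}\bigl[g(X_T) + \int_t^T f(s,X_s,y_s,z_s)\,ds\,\big|\,\mathcal{F}_t\bigr]$ defines a continuous, square-integrable adapted process. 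This produces a well-defined map $\Phi:(y,z)\mapsto(Y,Z)$.

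Next I would show $\Phi$ is a contraction for $\beta$ large. Writing $(Y^i,Z^i)=\Phi(y^i,z^i)$ and $\delta Y = Y^1-Y^2$, $\delta Z = Z^1 - Z^2$, with $\delta y, \delta z$ analogously, I would apply It\^o's formula to $e^{\beta t}|\delta Y_t|^2$, take expectations so the stochastic integral drops out, and bound the driver increment through the Lipschitz constant $L_f$ of Assumption~\ref{AY1}. A Young inequality splits the cross term, and choosing $\beta$ sufficiently large (depending only on $L_f$) absorbs the $\delta Y$ and $\delta Z$ contributions on the left, leaving
$$\|(Y^1,Z^1)-(Y^2,Z^2)\|_\beta^2 \leq \tfrac{1}{2}\,\|(y^1,z^1)-(y^2,z^2)\|_\beta^2.$$
The Banach fixed-point theorem then delivers a unique fixed point of $\Phi$, which is precisely the sought solution pair $(Y,Z)$; uniqueness in the unweighted norm follows because the weighted and unweighted $\mathbb{H}^2$-norms are equivalent on the finite horizon $[0,T]$, and $Y \in \mathbb{S}^2$ is recovered from its representation as a conditional expectation together with the Burkholder-Davis-Gundy inequality.

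I expect the contraction estimate to be the main obstacle: the It\^o expansion must be carried out so that the a-priori unknown sign of the cross terms is controlled purely through $L_f$ and $\beta$, and one must check that a single $\beta$ simultaneously contracts both the $Y$- and $Z$-components. By comparison, the $\mathbb{L}^2$ data integrability is routine, though it requires invoking moment estimates for $X$ of order higher than two; these are not literally stated in Theorem~\ref{theorem1} but follow from Assumption~\ref{AX1} by the same standard SDE arguments.
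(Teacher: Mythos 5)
The paper offers no proof of this statement---it is imported directly from~\cite{El1997}---and your Picard iteration via the martingale representation theorem combined with a Banach fixed-point argument in the $e^{\beta t}$-weighted $\mathbb{H}^2$ norm is precisely the classical proof given in that reference (and in Pardoux--Peng), so your approach is sound and complete in outline, including the standard technical points (localization so the stochastic integral vanishes under expectation, and a single large $\beta$ contracting both components). The one caveat is the data integrability you flag at the end: because Assumption~\ref{AY1} permits quadratic growth of $f$ and $g$ in $x$, the requirements $g(X_T)\in\mathbb{L}^2_{\mathcal{F}_T}\left(\Omega;\mathbb{R}\right)$ and $\mathbb{E}\left[\int_0^T \left|f\left(s,X_s,0,0\right)\right|^2 ds\right]<\infty$ need fourth moments of $X$, which require $x_0\in\mathbb{L}^4$ rather than the $\mathbb{L}^2$ guaranteed by Assumption~\ref{AX1}, so these bounds do not follow from Assumption~\ref{AX1} alone unless $x_0$ is deterministic (as it is in the paper's numerical examples).
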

An important property of BSDEs is that they provide a probabilistic representation for the solution of a specific class of PDEs given by the nonlinear Feynman–Kac formula. Consider the semi-linear parabolic PDE
\begin{equation}
    \begin{aligned}        
    \frac{\partial u(t,x)}{\partial t} + \nabla_x u(t, x)\,a(t, x) + \frac{1}{2} \Tr\left[b b^{\top} \Hessian_x u (t, x)\right] + f\left(t, x, u, \nabla_x u \,  b\right)(t, x) = 0,
    \end{aligned}
    \label{eq2}
\end{equation}
for all $(t, x) \in ([0, T]\times \mathbb{R}^d)$ and the terminal condition $u(T,x)=g(x)$. Assume that~\eqref{eq2} has a classical solution $u \in C^{1,2}_{\bfrac}\left([0, T] \times \mathbb{R}^{d}; \mathbb{R}\right)$ and the aforementioned standard Lipschitz assumptions of~\eqref{eq1} are satisfied. 
Then the solution of~\eqref{eq1} can be represented $\mathbb{P}$-a.s. by 
\begin{equation}
	Y_t = u\left(t, X_t\right), \quad Z_t= \nabla_x u\left(t, X_t\right)b\left(t, X_t\right) \quad \forall \, t \in \left[0,T\right).
\label{eq3}
\end{equation}

Next, we collect some Malliavin differentiability results on BSDEs, as we are interested on BSDEs such that their solution triple $\{X_t, Y_t, Z_t \}_{0\leq t \leq T}$ is differentiable
in the Malliavin sense. The results are stated in the following theorems.
\begin{theorem}
(Malliavin differentiability of SDEs~\cite{nualart2006malliavin})\\
Assume that Assumption~\ref{AX2} holds. Then $\forall \, t \in [0, T]$, $X_t \in \mathbb{D}^{1,2}\left( \Omega;\mathbb{R}^{d} \right)$ and its Malliavin derivative admits a continuous version $\{ D_s X_t \}_{ 0\leq s, t \leq T} \in \mathbb{S}^2\left([0, T] \times \Omega; 
 \mathbb{R}^{d \times d}\right)$ satisfying for $ 0\leq s \leq t \leq T$ the SDE
 \begin{equation*}
    D_s X_t = \mathds{1}_{s \leq t}\Biggl\{ b\left(s, X_s\right)  + \int_s^t \nabla_x a\left( r, X_r \right) D_s X_r dr +  \int_s^t \nabla_x b\left( r, X_r \right) D_s X_r dW_r\Biggr\},
 \end{equation*}
 where $\nabla_x b$ denotes a $\mathbb{R}^{d \times d \times d}$-valued tensor. Moreover, there exists a constant $C > 0$ such that
 \begin{equation*}
    \sup_{s \in [0, T]}\mathbb{E}\left[ \sup_{t \in [s, T]} \left| D_s X_t \right|^2 \right] \leq C, \quad \mathbb{E}\left[ \left| D_s X_r - D_s X_t \right|^2 \right] \leq C \left| r - t \right|.
\end{equation*}
\label{theorem3}
\end{theorem}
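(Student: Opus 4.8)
The plan is to follow the classical Picard-iteration argument for Malliavin differentiability of SDE solutions, exploiting the closedness of the operator $D$ together with the fact that under Assumption~\ref{AX2} the coefficients $a, b$ are continuously differentiable in $x$ with bounded derivatives, so that the chain rule of Lemma~\ref{lemma1} applies directly. First I would introduce the Picard iterates $X^{(0)}_t = x_0$ and
\begin{equation*}
    X^{(n+1)}_t = x_0 + \int_0^t a\left(r, X^{(n)}_r\right) dr + \int_0^t b\left(r, X^{(n)}_r\right) dW_r,
\end{equation*}
which, by the standard contraction estimate underlying Theorem~\ref{theorem1}, converge to $X_t$ in $\mathbb{S}^2$.

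The first substantive step is to show by induction that each $X^{(n)}_t \in \mathbb{D}^{1,2}$ and to derive the linear SDE satisfied by its Malliavin derivative. Since $X^{(0)}_t = x_0$ is deterministic, $D_s X^{(0)}_t = \mathbf{0}_{d,d}$. Assuming $X^{(n)}_r \in \mathbb{D}^{1,2}$ with $\sup_r \mathbb{E}[|D_s X^{(n)}_r|^2]$ under control, I would apply the commutation of $D$ with the Lebesgue and It\^o integrals together with the chain rule (Lemma~\ref{lemma1}) to obtain, for $s \leq t$,
\begin{equation*}
    D_s X^{(n+1)}_t = b\left(s, X^{(n)}_s\right) + \int_s^t \nabla_x a\left(r, X^{(n)}_r\right) D_s X^{(n)}_r\, dr + \int_s^t \nabla_x b\left(r, X^{(n)}_r\right) D_s X^{(n)}_r\, dW_r,
\end{equation*}
and $D_s X^{(n+1)}_t = \mathbf{0}_{d,d}$ for $s > t$; the term $b(s, X^{(n)}_s)$ arises because $D_s$ acts on the integrand of the stochastic integral at the lower endpoint $r = s$.

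Next I would establish the key a priori bound. Using that $|\nabla_x a|, |\nabla_x b| \leq L_{a,b}$, the linear growth bound $\mathbb{E}[\sup_r |b(r, X^{(n)}_r)|^2] \leq C$ (from Assumption~\ref{AX2} and Theorem~\ref{theorem1}), and the Burkholder-Davis-Gundy and Cauchy-Schwarz inequalities, Gronwall's lemma yields a constant $C$ independent of $n$ and $s$ with $\sup_s \mathbb{E}[\sup_{t \in [s,T]} |D_s X^{(n)}_t|^2] \leq C$. A parallel estimate on the differences $D_s X^{(n+1)}_t - D_s X^{(n)}_t$, combined with the $\mathbb{S}^2$-convergence of $X^{(n)}$ and the continuity of $\nabla_x a, \nabla_x b$, shows that $\left(D_s X^{(n)}_t\right)_n$ is Cauchy in the $\mathbb{D}^{1,2}$ norm, i.e.\ the derivatives converge in $L^2\left(\Omega \times [0,T]; \mathbb{R}^{d\times d}\right)$. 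The closedness of $D$ then guarantees $X_t \in \mathbb{D}^{1,2}$ and that the limit $D_s X_t$ satisfies the stated linear SDE, while the uniform bound passes to the limit to give the first moment estimate.

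Finally, the time-regularity estimate $\mathbb{E}[|D_s X_r - D_s X_t|^2] \leq C|r - t|$ follows by writing $D_s X_r - D_s X_t$ as the sum of the drift and diffusion increments over $[t, r]$, applying the It\^o isometry and BDG, and bounding the integrands by the moment estimate already obtained. The main obstacle I anticipate is the uniform-in-$n$ and uniform-in-$s$ a priori estimate: one must carefully track the contraction of the third-order tensor $\nabla_x b$ against the matrix-valued process $D_s X^{(n)}_r$, handle the $s$-dependent ``initial datum'' $b(s, X^{(n)}_s)$, and ensure that the Gronwall constant does not degenerate as $s$ ranges over $[0,T]$ --- this is precisely what legitimizes the passage to the limit through the closedness of $D$.
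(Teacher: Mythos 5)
The paper does not prove this statement: Theorem~\ref{theorem3} is quoted verbatim from the cited reference~\cite{nualart2006malliavin} (Theorem 2.2.1 there) and used as a black box. Your Picard-iteration sketch is essentially the standard proof from that reference and is correct in outline: induction on the iterates via the chain rule of Lemma~\ref{lemma1}, a uniform-in-$n$ and uniform-in-$s$ Gronwall bound, and passage to the limit using the closedness of $D$. The only remark worth making is that the textbook argument does not need the sequence $\left(D_s X^{(n)}_t\right)_n$ to be Cauchy in the $\mathbb{D}^{1,2}$ norm; it suffices to combine the $L^2$-convergence of $X^{(n)}_t$ with the uniform bound $\sup_n \mathbb{E}\bigl[\int_0^T |D_s X^{(n)}_t|^2\, ds\bigr] < \infty$ and invoke the closability lemma (Nualart, Lemma 1.2.3), which yields $X_t \in \mathbb{D}^{1,2}$ and weak convergence of the derivatives --- enough to pass to the limit in the linear SDE. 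Your stronger Cauchy claim is also provable via the difference estimates you describe, so either route closes the argument.
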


\begin{theorem}
(Malliavin differentiability of BSDEs~\cite{El1997})\\
Assume that Assumptions~\ref{AX2} and~\ref{AY2} holds. Then  the solution triple $\{X_t, Y_t, Z_t \}_{0\leq t \leq T}$ of~\eqref{eq1} verify that $\forall \, t \in [0, T]$ $Y_t \in \mathbb{D}^{1,2}\left( \Omega;\mathbb{R} \right)$, $Z_t \in \mathbb{D}^{1,2}\left( \Omega;\mathbb{R}^{1 \times d} \right)$, $X$ satisfies the statement of Theorem~\ref{theorem3}, and a version of $\{ D_s Y_t \}_{ 0\leq s, t \leq T} \in \mathbb{S}^2\left( [0, T] \times \Omega;
 \mathbb{R}^{1 \times d}\right)$, $\{ D_s Z_t \}_{ 0\leq s, t \leq T} \in \mathbb{H}^2\left( [0, T] \times \Omega; 
 \mathbb{R}^{d \times d}\right)$ satisfy the BSDE
 \begin{equation*}
     \begin{split}     
         D_s Y_t & = \mathbf{0}_d, \quad D_s Z_t = \mathbf{0}_{d,d}, \quad 0 \leq t < s \leq T,\\
        D_s Y_t & = \nabla_x g\left(X_T\right) D_s X_T+ \int_t^T \bigl( \nabla_x f\left( r, X_r, Y_r, Z_r \right) D_s X_r + \nabla_y f\left( r, X_r, Y_r, Z_r \right) D_s X_r \bigr. \\ 
         & \quad  + \bigl. \nabla_z f\left( r, X_r, Y_r, Z_r \right) D_s X_r \bigr)dr -  \int_t^T D_s Z_r dW_r, \quad 0 \leq s \leq t \leq T.\\
    \end{split}
\end{equation*}
Moreover, $D_t Y_t$ defined by the above equation is a version of $Z_t$ $\mathbb{P}$-a.s. $\forall \, t \in [0, T]$.
\label{theorem4}
\end{theorem}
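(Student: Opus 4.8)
The plan is to follow the classical fixed-point route of~\cite{El1997,nualart2006malliavin}: realise $(Y,Z)$ as the limit of a Picard iteration, show that each iterate is Malliavin differentiable with derivatives solving an explicit \emph{linear} BSDE, derive bounds on these derivatives that are uniform in the iteration index, and finally invoke the closedness of the operator $D$ to transfer differentiability to the limit. Concretely, I would set $(Y^0,Z^0)=(\mathbf{0}_d,\mathbf{0}_{d,d})$ and define
\begin{equation*}
Y_t^{m+1} = g(X_T) + \int_t^T f(r,X_r,Y_r^m,Z_r^m)\,dr - \int_t^T Z_r^{m+1}\,dW_r,
\end{equation*}
where $Z^{m+1}$ arises from the martingale representation theorem applied to the $\mathcal{F}_t$-martingale $t\mapsto \mathbb{E}_t\bigl[\,g(X_T)+\int_0^T f(r,X_r,Y^m_r,Z^m_r)\,dr\,\bigr]$. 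Under Assumption~\ref{AY1} this scheme is a contraction on $\mathbb{S}^2\times\mathbb{H}^2$, so $(Y^m,Z^m)\to(Y,Z)$ there.

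The inductive step is to show $(Y^m_t,Z^m_t)\in\mathbb{D}^{1,2}$ for every $m$. The terminal datum lies in $\mathbb{D}^{1,2}$ by Lemma~\ref{lemma1} (here $g\in C^1_{\bfrac}$ by Assumption~\ref{AY2} and $X_T\in\mathbb{D}^{1,2}$ by Theorem~\ref{theorem3}), with $D_s\,g(X_T)=\nabla_x g(X_T)\,D_s X_T$; likewise $f\in C^{0,1,1,1}_{\bfrac}$ renders the driver Malliavin differentiable in its $(x,y,z)$ arguments. Commuting $D_s$ through the Lebesgue and stochastic integrals (using that $D$ commutes with conditional expectation and with $\int\cdot\,dW$ on $\mathbb{D}^{1,2}$) shows that $(D_sY^{m+1},D_sZ^{m+1})$ solves a \emph{linear} BSDE whose coefficients are the bounded partial derivatives of $f$ evaluated along the $m$-th iterate and whose data are $\nabla_x g(X_T)D_sX_T$ and the forcing $\nabla_x f\,D_sX_r$. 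Applying the standard linear-BSDE a priori estimate to this equation, together with the moment bound $\sup_s\mathbb{E}[\sup_t|D_sX_t|^2]\le C$ from Theorem~\ref{theorem3}, yields bounds on $\sup_s\mathbb{E}[\sup_t|D_sY^m_t|^2]$ and $\sup_s\mathbb{E}[\int_0^T|D_sZ^m_r|^2\,dr]$ that are \emph{uniform} in $m$.

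With these uniform bounds, closedness of $D$ finishes the differentiability claim: since $Y^m_t\to Y_t$ and $Z^m_t\to Z_t$ in $L^2$ while $\{D_sY^m_t\}$ and $\{D_sZ^m_t\}$ are bounded in $L^2([0,T]\times\Omega)$, a weak-compactness and closed-graph argument forces $Y_t,Z_t\in\mathbb{D}^{1,2}$ and identifies the weak limits as $D_sY_t,D_sZ_t$. Passing $m\to\infty$ in the linear BSDE for the iterates (the Lipschitz/continuity of the coefficients lets the limit pass inside $f$ and its derivatives) produces exactly the linear BSDE in the statement, while $D_sY_t=\mathbf{0}_d$, $D_sZ_t=\mathbf{0}_{d,d}$ for $t<s$ follow from the non-anticipativity of the solution. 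I expect the genuine obstacle to be precisely this step: establishing the \emph{uniform-in-$m$} Malliavin--Sobolev bounds and justifying the limit exchange rigorously, since one must control the derivative BSDEs simultaneously over all source points $s$ and ensure the limit of $D_sZ^m$ is an admissible element of $\mathbb{H}^2([0,T]\times\Omega;\mathbb{R}^{d\times d})$.

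For the representation $Z_t=D_tY_t$, I would exploit that $t\mapsto Y_t+\int_0^t f(r,X_r,Y_r,Z_r)\,dr$ is the martingale $\mathbb{E}_t[\,g(X_T)+\int_0^T f\,dr\,]$ with $dW$-integrand $Z_t$. The assertion is then the Clark--Ocone-type statement that the Malliavin derivative evaluated on the diagonal recovers this integrand: taking a continuous version of $s\mapsto D_sY_t$ (whose existence follows from the $\mathbb{S}^2$-regularity of $D_sY$ and the continuity estimate of Theorem~\ref{theorem3}), evaluating its BSDE at $s=t$, and matching against the $dW_t$-term of the original BSDE gives $Z_t=D_tY_t$ $\mathbb{P}$-a.s.\ for a.e.\ $t$, which extends to all $t\in[0,T]$ by continuity of both sides.
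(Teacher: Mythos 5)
The paper offers no proof of Theorem~\ref{theorem4}: it is recalled as a preliminary and attributed to~\cite{El1997} (see also~\cite{nualart2006malliavin}), so there is no in-paper argument to compare against. Your sketch is a faithful reconstruction of the classical proof (Proposition~5.3 of~\cite{El1997}): Picard iteration contracting on $\mathbb{S}^2\times\mathbb{H}^2$, Malliavin differentiability of each iterate via the chain rule and commutation of $D$ with the Lebesgue and It\^o integrals, uniform-in-$m$ bounds on the derivatives, and the closedness/weak-compactness lemma for $D$ (Lemma~1.2.3 in~\cite{nualart2006malliavin}) to pass to the limit; the diagonal identification $Z_t=D_tY_t$ via the martingale representation is also the standard route. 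Two remarks. First, a small imprecision: the equation satisfied by $\left(D_sY^{m+1},D_sZ^{m+1}\right)$ is not a self-contained linear BSDE in those unknowns, because the terms $\nabla_y f\,D_sY^m_r$ and $\nabla_z f\,D_sZ^m_r$ involve the \emph{previous} iterate's derivatives; it is an explicit recursion, and the uniform bounds require a Gr\"onwall-type argument propagated across $m$ (and uniformly in the source point $s$), which is exactly the ``genuine obstacle'' you flag. Second, your description of the limiting equation --- with $\nabla_y f$ multiplying $D_sY_r$ and $\nabla_z f$ multiplying $D_sZ_r$ --- is the correct one; the display in the paper's statement writes $D_sX_r$ after all three partial derivatives, which is a typographical slip relative to the source (compare the corrected form used in~\eqref{eq10}).
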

The final important result that is relevant for this work is the path regularity result of the processes $Y$ and $Z$, which we state in the following theorem.
\begin{theorem}
(Path regularity~\cite{imkeller2010path})\\
Under Assumptions~\ref{AX2} and~\ref{AY2} the BSDE~\eqref{eq1} admits a unique solution triple $\{X_t, Y_t, Z_t \}_{0\leq t \leq T} \in \mathbb{S}^2\left([0, T]\times \Omega; \mathbb{R}^{d} \right) \times \mathbb{S}^2\left( [0, T]\times \Omega;\mathbb{R} \right) \times \mathbb{H}^2\left([0, T]\times \Omega; \mathbb{R}^{1 \times d} \right)$. Moreover, the following holds true:
\begin{itemize}
    \item[(i)] There exist a constant $C>0$ such that $\forall \, 0\leq s\leq t\leq T$
    \begin{equation*}
        \mathbb{E}\left[ \sup_{s \leq r \leq t}\left| Y_r - Y_s \right|^2 \right] \leq C \left| t - s \right|
    \end{equation*}
    \item[(ii)] There exist a constant $C>0$ such that for any partition $\Delta$ of $[0, T]$
    \begin{equation*}
        \sum_{n=1}^{N-1}\mathbb{E}\left[ \int_{t_n}^{t_{n+1}} \left| Z_t - Z_{t_n} \right|^2 dt  \right] \leq C \left| \Delta \right|.    
    \end{equation*}
    \item[(iii)] Under Assumptions~\ref{AX3} and~\ref{AY3} we further have that there exist a constant $C>0$ such that $\forall \, 0\leq s\leq t\leq T$
    \begin{equation*}
        \mathbb{E}\left[ \sup_{s \leq r \leq t}\left| Z_r - Z_s \right|^2 \right] \leq C \left| t - s \right|
    \end{equation*}
    In particular, there exists a continuous modification of the process $Z$. 
\end{itemize}
\label{theorem5}
\end{theorem}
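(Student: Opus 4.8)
The plan is to dispatch well-posedness through Theorem~\ref{theorem2} and then to establish the three estimates one at a time, exploiting the two representations available for $Z$: the Markovian one $Z_t=\nabla_x u(t,X_t)\,b(t,X_t)$ from~\eqref{eq3} and the Malliavin one $Z_t=D_tY_t$ from Theorem~\ref{theorem4}. Throughout, $C>0$ denotes a generic constant depending only on $T$, $d$ and the structural constants of the assumptions, allowed to change from line to line. Since existence and uniqueness of the triple in the stated spaces are already furnished by Theorem~\ref{theorem2}, only~(i)--(iii) remain.

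For~(i), I would argue directly from the backward equation: for $s\le r\le t$,
\begin{equation*}
Y_r-Y_s=-\int_s^r f\left(\tau,X_\tau,Y_\tau,Z_\tau\right)d\tau+\int_s^r Z_\tau\,dW_\tau.
\end{equation*}
Cauchy--Schwarz bounds the drift contribution by $|t-s|\int_s^t|f_\tau|^2\,d\tau$, while Doob's (or the Burkholder--Davis--Gundy) inequality bounds the supremum of the stochastic integral by $C\,\mathbb{E}[\int_s^t|Z_\tau|^2\,d\tau]$. The polynomial growth of $f$ and the moment bounds of Theorem~\ref{theorem1} control the former, and the uniform bound $\sup_\tau\mathbb{E}[|Z_\tau|^2]\le C$, which follows from $Z_\tau=D_\tau Y_\tau$ and the $\mathbb{S}^2$-estimate on $\{D_sY_t\}$ in Theorem~\ref{theorem4}, yields $\mathbb{E}[\int_s^t|Z_\tau|^2\,d\tau]\le C|t-s|$. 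Taking the supremum over $r$ and then expectations gives the claim.

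For~(ii), I would follow the $L^2$-regularity argument of Zhang. One introduces the $\mathcal{F}_{t_n}$-measurable $L^2$-projection $\bar Z_{t_n}:=\tfrac{1}{\Delta t_n}\mathbb{E}_n[\int_{t_n}^{t_{n+1}}Z_\tau\,d\tau]$, which minimizes $\zeta\mapsto\mathbb{E}[\int_{t_n}^{t_{n+1}}|Z_\tau-\zeta|^2\,d\tau]$ over $\mathcal{F}_{t_n}$-measurable $\zeta$, so that the core estimate is $\sum_n\mathbb{E}[\int_{t_n}^{t_{n+1}}|Z_\tau-\bar Z_{t_n}|^2\,d\tau]\le C|\Delta|$. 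This is obtained by expressing the centered martingale increment $\int_{t_n}^{t_{n+1}}(Z_\tau-\bar Z_{t_n})\,dW_\tau$ through $Y_{t_{n+1}}-Y_{t_n}$ and its drift, invoking the $Y$-regularity from~(i) and the $X$-regularity from Theorem~\ref{theorem1}, and summing to the global order $|\Delta|$. Passing from $\bar Z_{t_n}$ to the pointwise $Z_{t_n}$ in the statement is then achieved through the representation $Z_t=D_tY_t$ and the $\mathbb{S}^2$-regularity of $\{D_sY_t\}$, which controls the residual $\sum_n\Delta t_n\,\mathbb{E}[|\bar Z_{t_n}-Z_{t_n}|^2]$.

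The hardest part is~(iii), which is exactly why the stronger Assumptions~\ref{AX3} and~\ref{AY3} are imposed. I would use the representation $Z_t=\nabla_x u(t,X_t)\,b(t,X_t)=:v(t,X_t)$ and argue that, under the non-degeneracy of $bb^\top$ together with the $C^2_{\bfrac}$-data, parabolic (Schauder) regularity upgrades the solution $u$ of~\eqref{eq2} to one whose gradient $\nabla_x u$ is bounded, Lipschitz in $x$ and $\tfrac{1}{2}$-Hölder in $t$; hence $v$ is Lipschitz in $x$ and $\tfrac{1}{2}$-Hölder in $t$. Then
\begin{equation*}
|Z_r-Z_s|\le|v(r,X_r)-v(r,X_s)|+|v(r,X_s)-v(s,X_s)|\le C|X_r-X_s|+C|r-s|^{1/2},
\end{equation*}
and taking the supremum over $r\in[s,t]$ and invoking $\mathbb{E}[\sup_{s\le r\le t}|X_r-X_s|^2]\le C|t-s|$ from Theorem~\ref{theorem1} yields the Hölder bound, while the continuity of $v$ and the path-continuity of $X$ furnish the continuous modification. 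I expect the analytic regularity of $u$ --- equivalently, the path regularity of the Malliavin derivatives $\{D_sZ_t\}$ solving the linear BSDE of Theorem~\ref{theorem4}, for which the ellipticity of $bb^\top$ is the decisive ingredient that renders the diagonal $Z_t=D_tY_t$ Hölder continuous --- to be the main obstacle; this is the technical core of~\cite{imkeller2010path} and is not reachable by the softer arguments used for~(i) and~(ii).
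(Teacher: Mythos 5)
The paper offers no proof of Theorem~\ref{theorem5}: it is quoted verbatim as a known result from~\cite{imkeller2010path} (and ultimately from Zhang's $L^2$-regularity theory), so there is no in-paper argument to compare against. Judged on its own terms, your outline identifies the correct ingredients and the standard route for each item: Doob/Burkholder--Davis--Gundy plus Cauchy--Schwarz on the backward dynamics for (i), Zhang's $L^2$-regularity argument via the projections $\bar Z_{t_n}$ for (ii), and the representation $Z_t=\nabla_x u(t,X_t)\,b(t,X_t)$ combined with the regularity of $u$ under uniform ellipticity for (iii). The reduction of (iii) to the space-Lipschitz/time-H\"older regularity of $v(t,x)=\nabla_x u(t,x)b(t,x)$ is exactly how the continuous modification of $Z$ is obtained in the Markovian setting, and the paper itself leans on the same fact (via~\cite{delarue2006forward}, Theorem~2.1) in its Lemma~\ref{lemma2}.

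Two places where your sketch silently assumes more than it proves. First, in (i) the drift estimate $\mathbb{E}\bigl[\int_s^t|f(\tau,X_\tau,Y_\tau,Z_\tau)|^2\,d\tau\bigr]\leq C$ uses the quadratic growth of $f$ in $x$ from Assumption~\ref{AY1}, so $|f|^2$ grows like $|x|^4$ and you need fourth moments of $X$; these hold under the linear-growth/Lipschitz hypotheses but only if $x_0$ has fourth moments, which is a hypothesis you should flag rather than absorb into ``the moment bounds of Theorem~\ref{theorem1}'' (which are stated only for second moments). Second, in (ii) the passage from the projections $\bar Z_{t_n}$ to the pointwise values $Z_{t_n}$ is the genuinely delicate step of Zhang's theorem --- it is where one needs the Malliavin representation $Z_t=D_tY_t$ together with the mean-square continuity of $t\mapsto D_sY_t$, and a one-line appeal to ``the $\mathbb{S}^2$-regularity of $\{D_sY_t\}$'' undersells that this regularity must be uniform in the lower index $s$ along the diagonal. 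Neither point is a wrong turn, but both are exactly the places where a referee of a self-contained proof would push back; as a reconstruction of the cited result the proposal is sound.
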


\section{Differential deep learning}
\label{sec3}
In this section, we discuss differential machine learning in the context of DNNs, specifically differential deep learning, which plays a crucial role in formulating our algorithm.  We start by describing DNNs, that are designed to approximate unknown or a large class of functions.
\subsection{Deep neural networks}
\label{subsec31}
Let $d_0, d_1\in \mathbb{N}$ be the input and output dimensions, respectively. We fix the global number of layers as $L+2$, $L \in \mathbb{N}$ the number of hidden layers each with $\eta \in \mathbb{N}$ neurons. The first layer is the input layer with $d_0$ neurons and the last layer is the output layer with $d_1$ neurons. A DNN is a function $\phi(\cdot; \theta): \mathbb{R}^{d_0} \to \mathbb{R}^{d_1}$ composed of a sequence of simple functions, which therefore can be collected in the following form
\begin{equation*}
    x \in \mathbb{R}^{d_0} \longmapsto A_{L+1}(\cdot;\theta(L+1)) \circ \varrho \circ A_{L}(\cdot;\theta(L)) \circ \varrho \circ \ldots \circ \varrho \circ A_1(x;\theta(1)) \in \mathbb{R}^{d_1},
\end{equation*}
where $\theta:=\left( \theta(1), \ldots, \theta(L+1) \right) \in \mathbb{R}^{P}$ and $P$ is the total number of network parameters, $x \in \mathbb{R}^{d_0}$ is called an input vector. Moreover, $A_l(\cdot; \theta(l)), l = 1, 2, \ldots, L+1$ are affine transformations: $A_1(\cdot;\theta(1)): \mathbb{R}^{d_0} \to \mathbb{R}^{\eta}$, $A_l(\cdot;\theta(l)),  l = 2, \ldots, L: \mathbb{R}^{\eta} \to \mathbb{R}^{\eta}$ and $A_{L+1}(\cdot;\theta(L+1)): \mathbb{R}^{\eta} \to \mathbb{R}^{d_1}$, represented by
\begin{equation*}
    A_l(v;\theta(l)) = \mathcal{W}_l v + \mathcal{B}_l, \quad v \in \mathbb{R}^{\eta_{l-1}},
\end{equation*}
where $\theta(l):=\left(\mathcal{W}_l, \mathcal{B}_l\right)$, $\mathcal{W}_l \in \mathbb{R}^{\eta_{l} \times \eta_{l-1}}$ is the weight matrix and $\mathcal{B}_l \in \mathbb{R}^{\eta_{l}}$ is the bias vector with $\eta_0 = d_0, \eta_{L+1} = d_1, \eta_l = \eta$ for $l = 1, \ldots, L$ and $\varrho: \mathbb{R} \to \mathbb{R}$ is a nonlinear function (called the activation function), and applied component-wise on the outputs of $A_l(\cdot;\theta(l))$. Common choices are $\tanh(\cdot), \sin(\cdot), \max(0,\cdot)$ etc. All these matrices $\mathcal{W}_l$ and vectors $\mathcal{B}_l$ form the parameters $\theta$ of the DNN and can be collected as 
$$P = \sum_{l=1}^{L+1}\eta_{l}(\eta_{l-1}+1) = \eta(d_0+1) + \eta(\eta+1)(L-1) + d_1(\eta+1),$$
for fixed $d_0, d_1, L$ and $\eta$. We denote by $\Theta$ the set of possible parameters for the DNN $\phi(\cdot; \theta)$ with $\theta \in \Theta$. The Universal Approximation Theorem (UAT)~\cite{hornik1989multilayer,cybenko1989approximation} justifies the use of DNNs as function approximators.

\subsection{Training of deep neural networks using supervised deep learning}
\label{subsec32}
Once the DNN architecture is defined, what determines the mapping of a certain input to an output are the parameters $\theta$ incorporated in the DNN model. These parameters need to be optimized such that the DNN approximates the unknown function which is called the training of the DNN. The loss function acts as the objective function to be minimized during the training procedure, in which the DNNs optimal set of parameters is searched. 

Consider the training data sampled from some (unknown) multivariate joint distribution $(\mathcal{X}, \mathcal{Y}) \sim \mathcal{P}$, where the random variable $\mathcal{X} \in \mathbb{R}^d$ is referred as the input and the random variable $\mathcal{Y} \in \mathbb{R}$ as the label. The goal (in a regression setting) is then to approximate the deterministic function $F(x) := \mathbb{E}^{\mathcal{P}} \left[ \mathcal{Y} |\mathcal{X} = x \right]$ by DNN $\phi(\mathcal{X};\theta)$ using $(\mathcal{X}, \mathcal{Y}) \sim \mathcal{P}$. The loss function measures how well the current approximation of the DNN is compared to the label. A common choice is the expected squared error, which is given as  
\begin{equation}
    \mathbf{L}( \theta ):= \mathbb{E}^{\mathcal{P}}\left[ \left| \phi\left( \mathcal{X}; \theta \right) - \mathcal{Y} \right|^2\right].
    \label{eq4}
\end{equation}
Then the optimal parameters $\theta^{*}$ in~\eqref{eq4} are given as 
\begin{equation*}
    \theta^{*} \in \argmin_{\theta \in \Theta} \mathbf{L}(\theta),
\end{equation*}
which can be estimated by using SGD-type algorithms. 

\subsection{Training of deep neural networks using differential deep learning}
\label{subsec33}
One of the biggest challenges w.r.t. finding the optimal parameter set of the DNN is to avoid learning training data-specific patterns, namely overfitting, and rather enforcing better generalization of the fitted models. Hence, regularization approaches have been developed for DNNs to avoid overfitting and thus improve the performance of the model. Such approaches penalize certain norms of the parameters $\theta$, expressing a {\it preference} for $\theta$. Differential deep learning~\cite{huge2020differential} has the same motivation as regularization, namely to improve the accuracy of the model. This is achieved by not expressing a {\it preference}, but {\it correctness}, in particular enforcing differential {\it correctness}. It assumes that the derivative of the label w.r.t. input is known. Let us consider the function $F_x(x)=\nabla_{x} F(x)$ and the random variable $\mathcal{Z}:= F_x(\mathcal{X}) \in \mathbb{R}^{1\times d}$. The goal in differential deep learning is to approximate the label function $F(x)$ by DNN $\phi(\mathcal{X};\theta)$ using data $( \mathcal{X}, \mathcal{Y}, \mathcal{Z}) \sim \mathcal{P}$ and minimizing the extended loss function~\eqref{eq4} given as
\begin{equation}
        \mathbf{L}( \theta )= \mathbb{E}^{\mathcal{P}}\left[ \left| \phi\left( \mathcal{X}; \theta \right) - \mathcal{Y} \right|^2\right] + \lambda \mathbb{E}^{\mathcal{P}}\left[ \left| \nabla_{x}\phi\left( \mathcal{X}; \theta \right) - \mathcal{Z} \right|^2\right],
\label{eq5}
\end{equation}
where $\nabla_{x}\phi$ is calculated using AD and $\lambda \in \mathbb{R}_{+}$. For our purpose, we consider a slightly different formulation of differential deep learning compared to~\cite{huge2020differential}. We use one DNN $\phi^y\left( \mathcal{X} ; \theta^y\right)$ to approximate the function $F(x)$ and another $\phi^z\left( \mathcal{X} ; \theta^z\right)$ for $F_x(x)$, and rewrite the loss function~\eqref{eq5} as
\begin{equation}
    \mathbf{L}( \theta )= \omega_1 \mathbb{E}^{\mathcal{P}}\left[ \left| \phi^y\left( \mathcal{X}; \theta^y \right) - \mathcal{Y} \right|^2\right] + \omega_2 \mathbb{E}^{\mathcal{P}}\left[ \left| \phi^z\left( \mathcal{X}; \theta^z \right) - \mathcal{Z} \right|^2\right],
\label{eq6}
\end{equation}
where $\theta = \left( \theta^y, \theta^z \right)$, $\omega_1, \omega_2 \in [0, 1]$ and $\omega_1 +\omega_2 = 1$. Then the optimal parameters $\theta^{*}$ in~\eqref{eq6} are given as 
\begin{equation*}
    \theta^{*} \in \argmin_{\theta \in \Theta} \mathbf{L}(\theta),
\end{equation*}
estimated using a SGD method. 

\section{A backward differential deep learning-based scheme for BSDEs}
\label{sec4}
In this section, we introduce the proposed backward differential deep learning-based method. In order to formulate BSDE as a differential learning problem, we firstly discretize the integrals in the resulting BSDE system given as
\begin{align}
        X_t & = x_0 + \int_{0}^{t} a \left(s, X_s\right)\,ds + \int_{0}^{t} b \left(s, X_s\right)\,dW_s, \label{eq7}\\
      Y_t & = g\left(X_T\right) + \int_{t}^{T} f\left(s, \mathbf{X}_s\right)\,ds -\int_{t}^{T}Z_s\,dW_s, \label{eq8}\\
        D_s X_t & = \mathds{1}_{s \leq t}\Bigl[ b\left(s, X_s\right)  + \int_s^t \nabla_x a\left( r, X_r \right) D_s X_r dr +  \int_s^t \nabla_x b\left( r, X_r \right) D_s X_r dW_r\Bigr], \label{eq9}\\
    D_s Y_t & = \mathds{1}_{s \leq t}\Bigl[\nabla_x g\left(X_T\right) D_s X_T+ \int_t^T f_D\left( r, \mathbf{X}_r, \mathbf{D}_s \mathbf{X}_r\right) dr -  \int_t^T D_s Z_r dW_r \Bigl],  \label{eq10}
\end{align}
where we introduced the notations $\mathbf{X}_t := \left( X_t, Y_t, Z_t\right)$, $\mathbf{D}_s\mathbf{X}_t := \left( D_s X_t, D_s Y_t, D_s Z_t\right)$ and $f_D\left(t, \mathbf{X}_t, \mathbf{D}_s\mathbf{X}_t \right):= \nabla_x f\left( t, \mathbf{X}_t \right) D_s X_t + \nabla_y f\left( t, \mathbf{X}_t\right) D_s Y_t + f\left( t, \mathbf{X}_t\right) D_s Z_t$ $\forall \, 0 \leq s, t \leq T$. Note that the solution of the above BSDE system is a pair of triples of stochastic processes $\{\left(X_t, Y_t, Z_t\right)\}_{0\leq t \leq T}$ and $\{\left(D_s X_t, D_s Y_t, D_s Z_t\right)\}_{0\leq s, t \leq T}$ such that~\eqref{eq7}-\eqref{eq10} holds $\mathbb{P}$-a.s.

Let us consider the time discretization $\Delta$. For notational convenience we write $\Delta W_n = W_{t_{n+1}} - W_{t_n}$, $(X_n, Y_n, Z_n) = (X_{t_n}, Y_{t_n}, Z_{t_n})$, $(D_n X_m, D_n Y_m, D_n Z_m) = (D_{t_n} X_{t_m}, D_{t_n} Y_{t_m}, D_{t_n} Z_{t_m})$, and $\left(X^{\Delta}_n, Y^{\Delta}_n, Z^{\Delta}_n\right)$, $\left(D_nX^{\Delta}_m, D_nY^{\Delta}_m, D_n Z^{\Delta}_m\right)$ for the approximations, where $0 \leq n, m \leq N$. The forward SDE~\eqref{eq7} is approximated by the Euler-Maruyama scheme, i.e.,
\begin{equation}
     X^{\Delta}_{n+1} = X^{\Delta}_n + a\left(t_n, X^{\Delta}_n\right) \Delta t_n + b\left(t_n, X^{\Delta}_n\right) \Delta W_n,
     \label{eq11}
\end{equation}
for $n = 0, 1, \ldots, N-1$, where $X^{\Delta}_0 = x_0$.

Next we apply the Euler-Maruyama scheme to~\eqref{eq8}. For the time interval $\left[t_n, t_{n+1}\right]$ we have
\begin{equation}
     Y_n = Y_{n+1} + \int_{t_{n}}^{t_{n+1}} f\left(s, \mathbf{X}_s\right)\,ds -\int_{t_{n}}^{t_{n+1}} Z_s\,dW_s.
     \label{eq12}
\end{equation}
Applying the Euler-Maruyama scheme in~\eqref{eq12} one obtains
\begin{equation}
     Y^{\Delta}_n = Y^{\Delta}_{n+1} + f\left(t_n, \mathbf{X}_n^{\Delta}\right) \Delta t_n -  Z^{\Delta}_n \Delta W_n,
     \label{eq13}
\end{equation}
for $n = N-1, N-2, \ldots, 0,$ where $\mathbf{X}_n^{\Delta} := \left( X_n^{\Delta}, Y_n^{\Delta}, Z_n^{\Delta}\right)$ and $Y^{\Delta}_N = g\left(X^{\Delta}_N\right)$. 

Next, we discretize the BSDE for the Malliavin derivatives, i.e.~\eqref{eq9}-\eqref{eq10} in a similar manner. The Malliavin derivative~\eqref{eq9} approximated by the Euler-Maruyama method gives the estimates
\begin{equation}
    D_n X_m^{\Delta} = 
    \begin{cases}
        \mathds{1}_{n = m}b\left( t_n, X_n^{\Delta}\right), \quad  0 \leq m \leq n \leq N,\\
   	D_n X^{\Delta}_{m-1} + \nabla_x a\left(t_{m-1}, X^{\Delta}_{m-1}\right) D_n X_{m-1}^{\Delta} \Delta t_{m-1}\\
    + \nabla_xb\left(t_{m-1}, X^{\Delta}_{m-1}\right) D_n X_{m-1}^{\Delta} \Delta W_{m-1},  \quad  0 \leq n < m \leq N.
    \end{cases}
     \label{eq14}
\end{equation}
From $\left[t_n, t_{n+1}\right]$~\eqref{eq10} is given as
\begin{equation}
    \begin{split}
    D_n Y_n & = D_n Y_{n+1} + \int_{t_{n}}^{t_{n+1}} f_D\left(s, \mathbf{X}_s, \mathbf{D}_n \mathbf{X}_s\right)\,ds - \int_{t_{n}}^{t_{n+1}} D_n Z_s\,dW_s.
    \end{split}
    \label{eq15}
\end{equation}
Using Euler-Maruyama scheme in~\eqref{eq15}, we get
\begin{equation}
    \begin{split}
    D_n Y_n^{\Delta} & = D_n Y_{n+1}^{\Delta} + f_D\left(t_n, \mathbf{X}_n^{\Delta}, \mathbf{D}_n \mathbf{X}_n^{\Delta}\right)\,\Delta t_n - D_n Z_n^{\Delta} \Delta W_n,
    \end{split}
    \label{eq16}
\end{equation}
with $\mathbf{D}_n\mathbf{X}_n^{\Delta} := \left( D_n X_n^{\Delta}, D_n Y_n^{\Delta}, D_n Z_n^{\Delta}\right)$. Given the Markov property of the underlying processes, the Malliavin chain rule (Lemma~\ref{lemma1}) implies that
\begin{equation}
  D_n Y_m = \nabla_x y\left(t_m, X_m\right) D_n X_m, \quad  D_n Z_m = \nabla_x z\left(t_m, X_m\right) D_n X_m =: \gamma\left(t_m, X_m\right) D_n X_m,  
  \label{eq17}
\end{equation}
for some deterministic functions $y: [0, T] \times \mathbb{R}^{d} \to \mathbb{R}$ and $z: [0, T] \times \mathbb{R}^{d} \to \mathbb{R}^{1 \times d}$, where $\gamma: [0, T] \times \mathbb{R}^{d} \to \mathbb{R}^{d \times d} $ is the Jacobian matrix of $z\left(t_m, X_m\right)$. Note that from the Feynman-Kac relation~\eqref{eq3} we have that $z\left(t_m, X_m\right) = \nabla_x y\left(t_m, X_m\right) b\left(t_m, X_m\right)$. Hence, one can write that $D_n Y_m = z\left(t_m, X_m\right) b^{-1}\left(t_m, X_m\right) D_n X_m$. Using Theorem~\ref{theorem4}, we have that~\eqref{eq17} becomes
\begin{equation}
    Z_n^{\Delta} = Z_{n+1}^{\Delta} b^{-1}\left(t_{n+1}, X_{n+1}^{\Delta}\right) D_n X_{n+1}^{\Delta} + f_D\left(t_n, \mathbf{X}_n^{\Delta}, \mathbf{D}_n \mathbf{X}_n^{\Delta}\right) \Delta t_n - \Gamma_N^{\Delta} D_n X_n^{\Delta} \Delta W_n,
    \label{eq18}
\end{equation}
where due to the aforementioned relations $f_D\left(t, \mathbf{X}_n^{\Delta}, \mathbf{D}_n \mathbf{X}_n^{\Delta} \right)= \nabla_x f\left( t_n, \mathbf{X}_n^{\Delta} \right) D_n X_n^{\Delta} + \nabla_y f\left( t_n, \mathbf{X}_n^{\Delta}\right) Z_n^{\Delta} + \nabla_z f\left( t_n, \mathbf{X}_n^{\Delta}\right) \Gamma_n^{\Delta} D_n X_n^{\Delta}.$ 

After discretizing the integrals, our scheme is made fully implementable at each discrete time point $t_n$ by an appropriate function approximator to estimate the discrete unknown processes $\left(Y^{\Delta}_n, Z^{\Delta}_n, \Gamma^{\Delta}_n\right)$ in~\eqref{eq13} and~\eqref{eq18}. We estimate these unknown processes using DNNs and propose the following scheme:
\begin{itemize}
    \item Generate estimates $X^{\Delta}_{n+1}$ for $n = 0, 1, \ldots, N-1$ of SDE~\eqref{eq7} via~\eqref{eq11} and its discrete Malliavin derivative $D_n X_n^{\Delta}$, $D_n X_{n+1}^{\Delta}$ using~\eqref{eq14}.
    \item Set $$Y_N^{\Delta, \hat{\theta}} := g(X_N^{\Delta}), \quad Z_N^{\Delta, \hat{\theta}} := \nabla_x g(X_N^{\Delta}) b(t_N, X_N^{\Delta}), \quad \Gamma_N^{\Delta, \hat{\theta}} := \left[\nabla_x (\nabla_x g\, b)\right](t_N, X_N^{\Delta}).$$
    \item For each discrete time point $t_n$, $n = N-1, N-2, \ldots, 0$, use three independent DNNs $\phi^y_n(\cdot; \theta^y_n): \mathbb{R}^{d} \to \mathbb{R}$, $\phi^z_n(\cdot; \theta^z_n): \mathbb{R}^{d} \to \mathbb{R}^{1 \times d}$ and $\phi^{\gamma}_n(\cdot; \theta^{\gamma}_n): \mathbb{R}^{d} \to \mathbb{R}^{d \times d}$ to approximate the
    discrete processes $\left(Y_n^{\Delta}, Z_n^{\Delta}, \Gamma_n^{\Delta}\right)$, respectively. Train the parameter set $\theta_n = \left( \theta^y_n, \theta^z_n, \theta^{\gamma}_n\right)$ using the differential learning approach by constructing a loss function - as in~\eqref{eq6} - such that the dynamics of the discretized process $Y$ and $Z$ given by~\eqref{eq13} and~\eqref{eq18} are fulfilled, namely
    \begin{equation}
    \begin{aligned}
        \mathbf{L}_n^{\Delta}\left( \theta_n \right) & := \omega_1 \mathbf{L}^{y,\Delta}_n\left( \theta_n \right) + \omega_2  \mathbf{L}^{z,\Delta}_n\left( \theta_n \right),\\
        \mathbf{L}^{y,\Delta}_n\left( \theta_n \right)& :=\mathbb{E}\left[ \left\vert Y^{\Delta, \hat{\theta}}_{n+1} - \phi^y_n\left( X^{\Delta}_n; \theta^y_n \right) + f\left(t_n, \mathbf{X}^{\Delta, \theta}_n\right) \Delta t_n -  \phi^z_n\left( X^{\Delta}_n; \theta^z_n \right) \Delta W_n  \right\vert^2 \right],\\
        \mathbf{L}^{z,\Delta}_n\left( \theta_n \right) & := \mathbb{E}\left[ \left\vert Z^{\Delta, \hat{\theta}}_{n+1} b^{-1}\left( t_{n+1}, X_{n+1}^{\Delta} \right) D_n X^{\Delta}_{n+1} - \phi^z_n\left( X^{\Delta}_n; \theta^z_n \right) \right. \right.\\
        & \quad \left. \left. + f_D\left(t_n, \mathbf{X}^{\Delta, \theta}_n, \mathbf{D}_n\mathbf{X}_n^{\Delta, \theta}\right)\Delta t_n - \phi^{\gamma}_n\left( X_n^{\Delta}; \theta^{\gamma}_n\right) D_n X_n^{\Delta} \Delta W_n
        \right\vert^2\right],
    \end{aligned}
    \label{eq19}
    \end{equation}
    where for notational convinience $\mathbf{X}^{\Delta, \theta}_n:=\left( X_n^{\Delta}, \phi^y_n\left( X^{\Delta}_n; \theta^y_n \right), \phi^z_n\left( X^{\Delta}_n; \theta^z_n \right) \right)$ and $\mathbf{D}_n\mathbf{X}^{\Delta, \theta}_n:=\left( D_n X_n^{\Delta}, \phi^z_n\left( X^{\Delta}_n; \theta^y_n \right), \phi^{\gamma}_n\left( X^{\Delta}_n; \theta^z_n \right) D_n X_n^{\Delta} \right)$. Approximate the optimal parameters $\theta^*_n \in \argmin_{\theta_n \in \Theta_{n}} \mathbf{L}^{\Delta}_n\left( \theta_n \right)$ using a SGD method and receive the estimated parameters $\hat{\theta}_n = \left( \hat{\theta}^y_n, \hat{\theta}^z_n, \hat{\theta}^{\gamma}_n \right)$. Then, define
    $$Y_n^{\Delta, \hat{\theta}} := \phi^y_n\left( X^{\Delta}_n; \hat{\theta}^y_n \right), \quad Z_n^{\Delta, \hat{\theta}} := \phi^z_n\left( X^{\Delta}_n; \hat{\theta}^z_n \right), \quad \Gamma_n^{\Delta, \hat{\theta}} := \phi^{\gamma}_n \left( X^{\Delta}_n; \hat{\theta}^{\gamma}_n \right).$$ 
\end{itemize}
Note that the scheme in~\cite{hure2020deep} (the deep backward dynamic programming (DBDP) scheme) can be considered as a special case of our scheme by choosing $\omega_1 = 1$ and $\omega_2 = 0$. We refer to our scheme as DLBDP (differential learning backward dynamic programming) scheme, where $\omega_1 = \frac{1}{d+1}$ and $\omega_2 = \frac{d}{d+1}$ is considered due to dimensionality of the processes $Y$ and $Z$.

Our scheme offers several advantages over the DBDP scheme and other well-known deep learning-based approaches~\cite{weinan2017deep,germain2022approximation,raissi2024forward,kapllani2024deep}:
\begin{itemize}
    \item[(i)] By explicitly incorporating the dynamics of the process $Z$ via the BSDE~\eqref{eq10} in the loss function~\eqref{eq19}, we enhance the accuracy of $Z$ approximations through the SGD method.
    \item[(ii)] Additionally, the inclusion of the process $\Gamma$ in the loss function through BSDE~\eqref{eq10} allows for better estimation of $\Gamma$ within the DLBDP scheme compared to the deep learning-based schemes, where AD is required for approximation of $\Gamma$.
\end{itemize}
In comparison to the approach presented in~\cite{negyesi2024one}, which utilizes deep learning and the Malliavin derivative, our scheme demonstrates the following advantages:
\begin{itemize}
    \item[(i)] Since the scheme in~\cite{negyesi2024one} employs supervised deep learning, it requires solving two optimization problems per time step - one for BSDE~\eqref{eq8} and another for BSDE~\eqref{eq10} - to approximate the unknown processes $\left(Y, Z, \Gamma\right)$. Consequently, the computational cost of the scheme in~\cite{negyesi2024one} is expected to be higher than that of our scheme.
    \item[(ii)] Our scheme can be seamlessly extended not only to DBDP scheme, which is formulated backward in time through local optimizations at each discrete time step, but also to other supervised deep learning-based approaches, such as~\cite{weinan2017deep,raissi2024forward,kapllani2024deep}, which are formulated forward in time as a global optimization problem. This is part of our ongoing research. The approach proposed in~\cite{negyesi2024one} cannot be integrated to such schemes, as it cannot be formulated as a global optimization problem.
\end{itemize}

\section{Convergence analysis}
\label{sec5}
The main goal of this section is to prove the convergence of the DLBDP scheme towards the solution $\left(Y, Z, \Gamma\right)$ of the BSDE system~\eqref{eq7}-\eqref{eq10}, and provide a rate of convergence that depends on the discretization error from the Euler-Maruyama scheme and the approximation or model error by the DNNs. 

For the functions figuring in the BSDE system~\eqref{eq7}-\eqref{eq10}, the following assumptions are in place.
\begin{assump}{AX4}
    Assumption~\ref{AX3} holds, with the function $b(t,x)$ depends only on $t$. The functions $a(t,x)$ and $b(t,x)$ are $1/2$-Hölder continuous in time.
    \label{AX4}
\end{assump}    
\begin{assump}{AY4}
    Assumption~\ref{AY3} holds. Moreover, $g \in C^{2+\lfrac}_{\bfrac}\left( \mathbb{R}^d; \mathbb{R} \right)$, $\lfrac >0$. The function $f(t,x,y,z)$ and its partial derivatives $\nabla_x f$, $\nabla_y f$ and $\nabla_z f$ are all $1/2$-Hölder continuous in time.
    \label{AY4}
\end{assump}
We emphasise that our assumption for the SDE is less restrictive than that in~\cite{negyesi2024one}, where arithmetic Brownian motion (ABM) is assumed. When pricing and hedging options, usually the stock dynamics are modeled by the geometric Brownian motion (GBM). To ensure the applicability of our convergence analysis in such cases, we consider in the numerical section the ln-transformation of stock prices. Consequently, we obtain a drift and diffusion function that satisfy Assumption~\ref{AX4}, thereby ensuring that our theoretical analysis holds in the numerical experiments.

The following lemma is a consequence of the considered assumptions.
\begin{lemma}
Under Assumptions~\ref{AX4} and~\ref{AY4}, the Malliavin derivatives $\left( D_s X_t, D_s Y_t, D_s Z_t  \right)$ are bounded $\mathbb{P}$-a.s. for $0\leq s \leq t \leq T$.
\label{lemma2}
\end{lemma}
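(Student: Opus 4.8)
The plan is to treat the three Malliavin derivatives separately, exploiting that Assumption~\ref{AX4} forces $b$ to depend on $t$ alone, whence $\nabla_x b \equiv 0$. This single observation removes the stochastic integral from the first variation equation of Theorem~\ref{theorem3} and is the structural reason boundedness is attainable here.

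First I would bound $D_s X_t$. With $\nabla_x b \equiv 0$, the SDE of Theorem~\ref{theorem3} reduces, for each fixed $s$ and $\mathbb{P}$-a.e.\ $\omega$, to the pathwise linear integral equation
\[
D_s X_t = b(s) + \int_s^t \nabla_x a(r, X_r)\, D_s X_r \, dr, \qquad s \leq t \leq T,
\]
with no stochastic integral term. Taking Frobenius norms and using that $\nabla_x a$ is bounded (Assumption~\ref{AX2}) together with submultiplicativity gives $|D_s X_t| \leq |b(s)| + \|\nabla_x a\|_\infty \int_s^t |D_s X_r|\, dr$, and a pathwise Gronwall inequality yields $|D_s X_t| \leq |b(s)|\, e^{\|\nabla_x a\|_\infty T}$. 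Since $b(s) = b(s,0)$ is uniformly bounded (Assumption~\ref{AX2}), this estimate is uniform in $(s,t,\omega)$, proving that $D_s X_t$ is bounded $\mathbb{P}$-a.s.

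For $D_s Y_t$ and $D_s Z_t$ I would pass through the Markovian representation. By the Feynman--Kac formula~\eqref{eq3}, $Y_t = u(t, X_t)$ and $Z_t = \nabla_x u(t, X_t)\, b(t) =: z(t, X_t)$, with $u$ the classical solution of the semilinear PDE~\eqref{eq2}; the Malliavin chain rule (Lemma~\ref{lemma1}) then gives $D_s Y_t = \nabla_x u(t, X_t)\, D_s X_t$ and $D_s Z_t = \gamma(t, X_t)\, D_s X_t$, where, since $\nabla_x b \equiv 0$, $\gamma = \nabla_x z$ reduces to a product of the Hessian $\Hessian_x u$ and $b$. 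It therefore suffices to show that $\nabla_x u$ and $\Hessian_x u$ are bounded on $[0,T]\times\mathbb{R}^d$, after which boundedness of $D_s Y_t$ and $D_s Z_t$ follows by multiplying these bounded factors with the already bounded $D_s X_t$. The needed bounds are furnished by classical parabolic regularity: under the uniform ellipticity of $b b^\top$ (Assumptions~\ref{AX3}--\ref{AX4}), the boundedness of $a, f$ and their derivatives (Assumptions~\ref{AX3} and~\ref{AY3}), the $1/2$-Hölder-in-time regularity, and the terminal data $g \in C^{2+\lfrac}_{\bfrac}$ with $\lfrac > 0$ (Assumption~\ref{AY4}), global Schauder estimates place $u$ in the parabolic Hölder class $C^{1+\lfrac/2,\,2+\lfrac}_{\bfrac}$, so that $\nabla_x u$ and $\Hessian_x u$ are uniformly bounded.

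The crux is this final step, namely obtaining \emph{global} (uniform in $x$) bounds on the second spatial derivatives of $u$, equivalently on $\gamma$. Boundedness of $D_s Y_t$ by itself could be obtained more cheaply from the linear BSDE of Theorem~\ref{theorem4} --- its terminal value $\nabla_x g(X_T)\, D_s X_T$ and source $\nabla_x f\, D_s X_r$ are bounded, and the coefficients $\nabla_y f, \nabla_z f$ are bounded --- via the stochastic-exponential representation for linear BSDEs; but that argument controls only the $Y$-type component and gives no pointwise bound on the $Z$-type process $D_s Z$. Upgrading from the generic $\mathbb{S}^2$/$\mathbb{H}^2$ estimates to an almost sure bound on $D_s Z$ is exactly what forces the use of the Markovian PDE representation, the enhanced terminal regularity $g \in C^{2+\lfrac}_{\bfrac}$, and the time-homogeneity of $b$; verifying that the Schauder estimates hold uniformly on all of $\mathbb{R}^d$ is where the technical effort is concentrated.
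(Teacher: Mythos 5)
Your proposal is correct and follows essentially the same route as the paper: boundedness of $D_s X_t$ from the degeneration of the first-variation equation when $b$ depends only on $t$, followed by the Markovian representation $D_s Y_t = \nabla_x u\, D_s X_t$, $D_s Z_t = \gamma\, D_s X_t$ and global boundedness of $\nabla_x u$ and $\Hessian_x u$. The only difference is that the paper simply cites references for the two ingredients (Lemma 4.2 of Cheridito--Nam for $|D_s X_t|\le C$, and Theorem 2.1 of Delarue--Menozzi for $u\in C^{1,2}_{\bfrac}$), whereas you sketch the underlying Gronwall and Schauder arguments yourself.
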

\begin{proof}
Due to Assumption~\ref{AX4}, we have that $\left\vert D_s X_t \right\vert \leq C$ $\mathbb{P}$-a.s. for $0\leq s \leq t \leq T$ (see~\cite{cheridito2014bsdes}, Lemma 4.2.). Moreover, the parabolic PDE~\eqref{eq3} has a classical solution $u \in C^{1,2}_{\bfrac}\left([0, T] \times \mathbb{R}^{d}; \mathbb{R}\right)$ (see~\cite{delarue2006forward}, Theorem 2.1). The boundedness of $\left(D_s Y_t, D_s Z_t  \right)$ follows after using the relations~\eqref{eq17}.
\end{proof}
From the mean-value theorem for $f \in C^{0,2,2,2}_{\bfrac}\left( [0, T]\times \mathbb{R}^d \times \mathbb{R} \times \mathbb{R}^{1 \times d}; \mathbb{R} \right)$, we have that $f$ and all its first-order derivatives in $(x, y, z)$ are Lipschitz continuous. Therefore, the following holds (using also Assumption~\ref{AY4} and Lemma~\ref{lemma2})
\begin{equation}    
\begin{split}
    | f(t_1, \mathbf{x}_1) - f(t_2, \mathbf{x}_2) | & \leq L_f \left( | t_1 - t_2 |^\frac{1}{2} + | x_1 - x_2 | + | y_1 - y_2 | + | z_1 - z_2 |\right),\\
    |f_D(t_1, \mathbf{x}_1, \mathbf{Dx}_1) - f_D (t_2, \mathbf{x}_2, \mathbf{Dx}_2) | & \leq L_{f_D} \left( | t_1 - t_2 |^\frac{1}{2} + | x_1 - x_2 | + | y_1 - y_2 | + | z_1 - z_2 | \right.\\
    &\quad \left. + | Dx_1 - Dx_2 | + | Dy_1 - Dy_2 | + | Dz_1 - Dz_2 | \vphantom{| t_1 - t_2 |^\frac{1}{2}} \right),\\
\end{split}
\label{eq20}
\end{equation}
with $\mathbf{x}_{\ifrac} = \left( x_{\ifrac}, y_{\ifrac}, z_{\ifrac} \right)$, $\mathbf{Dx}_{\ifrac} = \left( Dx_{\ifrac}, Dy_{\ifrac}, Dz_{\ifrac} \right)$ and $t_{\ifrac} \in [0,T]$, $x_{\ifrac} \in \mathbb{R}^d$, $y_{\ifrac} \in \mathbb{R}$, $z_{\ifrac}, Dy_{\ifrac} \in \mathbb{R}^{1 \times d}$, $Dx_{\ifrac}, Dz_{\ifrac} \in \mathbb{R}^{d \times d}$, where $L_f, L_{f_D} >0$ and ${\ifrac} = 1, 2$.  

Under Assumptions~\ref{AX4} and~\ref{AY4}, Theorems~\ref{theorem1},~\ref{theorem3} and~\ref{theorem5}, we have that the processes $(X, Y, Z, DX, DY )$ are all mean-squared continuous in time, i.e., there exists some constant $C>0$ such that $\forall\, s, r, t \in [0, T]$ 
\begin{equation}    
\begin{split}
    \mathbb{E}\left[ \left| X_t - X_s \right|^2 \right] & \leq C \left| t - s \right|, \quad \mathbb{E}\left[ \left| D_s X_t - D_sX_r \right|^2 \right] \leq C \left| t - r \right|,\\    
    \mathbb{E}\left[ \left| Y_t - Y_s \right|^2 \right] & \leq C \left| t - s \right|, \quad \mathbb{E}\left[ \left| Z_t - Z_s \right|^2 \right] \leq C \left| t - s \right|, \quad \mathbb{E}\left[ \left| D_sY_t - D_sY_r \right|^2 \right] \leq C \left| t - r \right|.    
\end{split}
\label{eq21}
\end{equation}
From Assumptions~\ref{AX4},~\ref{AY4} and Lemma~\ref{lemma2}, we also see for $0\leq s \leq t \leq T$ that 
\begin{equation}
    \mathbb{E}\left[ \int_{0}^T \left\vert f_D\left( t, \mathbf{X}_t, \mathbf{D}_s\mathbf{X}_t\right) \right\vert^2 dt \right] < \infty.
    \label{eq22}
\end{equation}
Moreover, we have the well-known error estimate that the Euler-Maruyama approximations in~\eqref{eq11} admit to
\begin{equation}
        \max_{0\leq n \leq N-1}\mathbb{E}\left[  \sup_{ t_n\leq t \leq t_{n+1} }\left| X_t - X_{t_n}^{\Delta} \right|^2\right] = \mathcal{O}\left( \left| \Delta \right| \right),
    \label{eq23}
\end{equation}
under Assumption~\ref{AX1} and the Hölder continuity assumption in~\ref{AX4} (see~\cite{zhang2017backward}, Theorem 5.3.1), where the notation $\mathcal{O}\left( \left| \Delta \right| \right)$ means that $\limsup_{\left| \Delta \right| \to 0 } \left| \Delta \right|^{-1} \mathcal{O}\left( \left| \Delta \right| \right) < \infty$. Note that under Assumption~\ref{AX2} and the Hölder continuity assumption in~\ref{AX4}, it can be showed that the Euler-Maruyama Malliavin derivative approximations $D_n X_{n+1}^{\Delta}$ in~\eqref{eq14} admit to similar error estimates as in~\eqref{eq23}
\begin{equation}
    \mathbb{E}\left[\left| D_n X_{n+1} - D_n X_{n+1}^{\Delta} \right|^2\right] = \mathcal{O}\left( \left| \Delta \right| \right).
    \label{eq24}
\end{equation}
Let us introduce the $\mathbb{L}^2$-regularity of $DZ$:
\begin{equation}
    \varepsilon^{DZ}\left( |\Delta| \right):= \mathbb{E}\left[ \sum_{n=0}^{N-1} \int_{t_n}^{t_{n+1}} \left\vert D_{n} Z_r - \hat{DZ}_n \right\vert^2 dr \right], \quad \hat{DZ}_n:=\frac{1}{\Delta t_n} \mathbb{E}_n\left[ \int_{t_n}^{t_{n+1}} D_{n} Z_s ds \right].
    \label{eq25}
\end{equation}
Since $\hat{DZ}_n$ is an $\mathbb{L}^2$-projection of $DZ$, we have that $\varepsilon^{DZ}\left( |\Delta| \right)$ converges to zero for $|\Delta|$ going to zero. Moreover, as mentioned in~\cite{negyesi2024one}, since the terminal condition of the Malliavin BSDE~\eqref{eq10} is also Lipschitz continuous due to Assumption~\ref{AY4}, we have that
\begin{equation*}
    \varepsilon^{DZ}\left( |\Delta| \right) = \mathcal{O}\left( | \Delta | \right),
\end{equation*}
after applying~\cite{zhang2004numerical} (Theorem 3.1).

We now define 
\begin{equation}
    \begin{cases}
        \hat{Y}_n^{\Delta} & := \mathbb{E}_n\left[ Y_{n+1}^{\Delta, \hat{\theta}} \right] + f\left(t_n, \hat{\mathbf{X}}_n^{\Delta}\right) \Delta t_n,\\
        \hat{Z}_n^{\Delta} & := \mathbb{E}_n\left[ Z_{n+1}^{\Delta, \hat{\theta}} b^{-1}\left( t_{n+1}, X_{n+1}^{\Delta} \right) D_n X_{n+1}^{\Delta} \right] + f_D\left(t_n,\hat{\mathbf{X}}_n^{\Delta}, \mathbf{D}_n \hat{\mathbf{X}}_n^{\Delta}\right) \Delta t_n,\\
        \hat{\Gamma}_n^{\Delta} & := \frac{1}{\Delta t_n}\mathbb{E}_n\left[ \Delta W_n Z_{n+1}^{\Delta, \hat{\theta}} b^{-1}\left( t_{n+1}, X_{n+1}^{\Delta} \right) D_n X_{n+1}^{\Delta} \right] b^{-1}\left( t_{n}, X_{n}^{\Delta} \right),
    \end{cases}
    \label{eq26}
\end{equation}
for $n=0, \ldots, N-1$, where $\hat{\mathbf{X}}_n := \left( X_n^{\Delta}, \hat{Y}_n^{\Delta}, \hat{Z}_n^{\Delta}\right)$ and $\mathbf{D}_n \hat{\mathbf{X}}_n := \left( D_n X_n^{\Delta}, \hat{Z}_n^{\Delta}, \hat{\Gamma}_n^{\Delta} b(t_n, X_n^{\Delta})\right)$. Note that  $\hat{Y}_n^{\Delta}$ and $\hat{Z}_n^{\Delta}$ in~\eqref{eq26} are calculated by taking $\mathbb{E}_n[\cdot]$ in~\eqref{eq13} and~\eqref{eq18}, where $\mathbb{E}_n\left[\hat{Z}_n^{\Delta} \Delta W_n\right] = 0$ and $\mathbb{E}_n\left[\hat{\Gamma}_n^{\Delta} b(t_n, X_n^{\Delta})\Delta W_n\right] = 0$. Moreover, $\hat{\Gamma}_n^{\Delta}$ in~\eqref{eq26} is calculated by multiplying both sides of~\eqref{eq18} with $\Delta W_n$, where $\mathbb{E}_n\left[\Delta W_n f_D\left(t_n,\hat{\mathbf{X}}_n^{\Delta}, \mathbf{D}_n \hat{\mathbf{X}}_n^{\Delta}\right)\right] = 0$. Finally, applying the It\^o isometry gives $\hat{\Gamma}_n^{\Delta}$ in~\eqref{eq26}.

By the Markov property of the underlying processes, there exist some deterministic functions $\hat{y}_n$, $\hat{z}_n$ and $\hat{\gamma}_n$ such that
\begin{equation}
    \hat{Y}_n^{\Delta} = \hat{y}_n\left( X_n^{\Delta} \right), \quad \hat{Z}_n^{\Delta} = \hat{z}_n\left( X_n^{\Delta} \right), \quad \hat{\Gamma}_n^{\Delta} = \hat{\gamma}_n\left( X_n^{\Delta} \right), \quad n = 0, \ldots, N-1. 
\label{eq27}
\end{equation}
Moreover, by the martingale representation theorem, there exists an $\mathbb{R}^{d \times d}-$valued square integrable process $D_n \hat{Z}_t$ such that
\begin{equation}
     D_n Y_{n+1}^{\Delta, \hat{\theta}} = \hat{Z}_n^{\Delta} - f_D\left(t_n, \hat{\mathbf{X}}_n^{\Delta}, \mathbf{D}_n \hat{\mathbf{X}}_n^{\Delta}\right) \Delta t_n + \int_{t_n}^{t_{n+1}} D_n \hat{Z}_s dW_s,
\label{eq28}
\end{equation}
and by It\^o isometry, we have
\begin{equation*}
    D_n \hat{Z}_n^{\Delta} = \hat{\Gamma}_n^{\Delta} b(t_n, X_n^{\Delta})  =\frac{1}{\Delta t_n} \mathbb{E}_n\left[ \int_{t_n}^{t_{n+1}} D_n \hat{Z}_s ds\right], \quad n=0, \ldots, N-1.
\end{equation*}
Hence, $D \hat{Z}^{\Delta}$ is an $\mathbb{L}^2-$projection of $D \hat{Z}$. Moreover, $\hat{Z}^{\Delta}$ is an $\mathbb{L}^2-$projection of $\hat{Z}$ such that
\begin{equation}
    Y_{n+1}^{\Delta, \hat{\theta}} = \hat{Y}_n^{\Delta} - f\left(t_n, \hat{\mathbf{X}}_n^{\Delta}\right) \Delta t_n + \int_{t_n}^{t_{n+1}} \hat{Z}_s dW_s.
\label{eq29}
\end{equation}
Finally, we define the approximation errors of $\hat{y}_n$, $\hat{z}_n$ and $\hat{\gamma}_n$ by the DNNs $\phi^y_n$, $\phi^z_n$ and $\phi^{\gamma}_n$ defined as
\begin{equation}
    \begin{split}
        \varepsilon_n^{y}&:= \inf_{\theta^y_n \in \Theta^y_n} \mathbb{E}\left[ \left\vert \hat{y}_n\left( X_n^{\Delta} \right) - \phi^{y}_n\left(X_n^{\Delta}; \theta_n^y\right)\right\vert^2 \right],\\
        \varepsilon_n^{z}&:= \inf_{\theta^z_n \in \Theta^z_n} \mathbb{E}\left[ \left\vert \hat{z}_n\left( X_n^{\Delta} \right) - \phi^{z}_n\left(X_n^{\Delta}; \theta_n^z\right)\right\vert^2 \right],\\ \varepsilon_n^{\gamma}&:= \inf_{\theta^{\gamma}_n \in \Theta^{\gamma}_n} \mathbb{E}\left[ \left\vert \left(\hat{\gamma}_n\left( X_n^{\Delta} \right) - \phi^{\gamma}_n\left(X_n^{\Delta}; \theta_n^{\gamma}\right)\right) b(t_n, X_n^{\Delta})\right\vert^2 \right],
    \end{split}
    \label{eq30}
\end{equation}
for $n = 0, \ldots, N-1.$ The goal is now to find an upper bound of the total approximation error of the DLBDP scheme defined as 
\begin{equation*}
    \begin{split}        
    \mathcal{E}\left[\left(Y, Z, \Gamma\right), \left(Y^{\Delta, \hat{\theta}}, Z^{\Delta, \hat{\theta}}, \Gamma^{\Delta, \hat{\theta}}\right) \right]&:= \max_{0\leq n\leq N} \mathbb{E}\left[ \left\vert Y_n - Y^{\Delta, \hat{\theta}}_n\right\vert^2 \right] + \max_{0\leq n\leq N} \mathbb{E}\left[\left\vert Z_n - Z^{\Delta, \hat{\theta}}_n\right\vert^2 \right]\\
    & \qquad + \mathbb{E}\left[ \sum_{n=0}^{N-1} \int_{t_n}^{t_{n+1}} \left\vert D_n Z_s - D_n Z^{\Delta, \hat{\theta}}_n \right\vert^2 ds \right],
    \end{split}
\end{equation*}
in terms of the discretization error (from the Euler-Maruyama scheme) and the approximation errors~\eqref{eq30} by the DNNs, where $D_n Z_s - D_n Z^{\Delta, \hat{\theta}}_n = \Gamma_s b(s) - \Gamma_n^{\Delta, \hat{\theta}} b(t_n)$ due to relations~\eqref{eq17} and Assumption~\ref{AX4}.
\begin{theorem}
(Consistency of DLBDP scheme). Under Assumptions~\ref{AX4} and~\ref{AY4}, there exist a constant $C>0$ independent of the time partition $\Delta$, such that the total approximation error of the DLBDP scheme admits to
\begin{align*}
    \mathcal{E}\left[\left(Y, Z, \Gamma\right), \left(Y^{\Delta, \hat{\theta}}, Z^{\Delta, \hat{\theta}}, \Gamma^{\Delta, \hat{\theta}}\right) \right]
    & \leq C \left\{ \vphantom{\sum_{n=0}^{N-1}} \mathbb{E}\left[ \left\vert g(X_T) - g\left(X_N^{\Delta}\right) \right\vert^2\right] \right.\\
    &\quad \left. + \mathbb{E}\left[ \left\vert \nabla_x g(X_T) - \nabla_x g\left(X_N^{\Delta}\right) \right\vert^2\right] + |\Delta| + \varepsilon^{DZ}\left( |\Delta| \right) \right.\\
    & \quad \left. + N \sum_{n=0}^{N-1} \left( \omega_1 \varepsilon_n^{y} + \omega_2  \varepsilon_n^{z} \right)\right.\\
    & \quad \left. + \sum_{n=0}^{N-1}\left( \omega_2 \varepsilon_n^{y} + \omega_1  \varepsilon_n^{z} + \omega_2 \varepsilon_n^{\gamma}\right)\right\}.
\end{align*}
\label{theorem6}
\end{theorem}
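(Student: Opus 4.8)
The plan is to split the total error into a \emph{local} one-step part governed by the DNN approximation quality and a \emph{global} part propagated through the two coupled backward recursions, and then to close everything with a discrete Gronwall argument. The guiding observation is that \eqref{eq13} and \eqref{eq18} are two structurally identical one-step schemes: in the original BSDE \eqref{eq8} the network $\phi^y$ plays the role of the solution and $\phi^z$ that of the martingale control, whereas in the Malliavin BSDE \eqref{eq10} the network $\phi^z$ plays the role of the solution and $\phi^\gamma$ that of the control. This symmetry is what ultimately assigns the factor $N$ to $\omega_1\varepsilon_n^y$ and $\omega_2\varepsilon_n^z$ but not to the remaining terms.

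\textbf{Step 1 (loss decomposition).} Using the martingale representations \eqref{eq28}--\eqref{eq29} together with the definitions \eqref{eq26}, I would rewrite each weighted summand of \eqref{eq19} as an orthogonal (Pythagorean) sum. Since $\int_{t_n}^{t_{n+1}}(\hat Z_s - \hat Z_n^\Delta)\,dW_s$ and $\int_{t_n}^{t_{n+1}} D_n\hat Z_s\,dW_s$ are orthogonal in $\mathbb{L}^2$ to every $\mathcal{F}_{t_n}$-measurable quantity and, by It\^o isometry, contribute only their own variance, one obtains schematically
\[ \mathbf{L}_n^{y,\Delta}(\theta_n) = \mathbb{E}\bigl[\bigl|\hat Y_n^\Delta - \phi_n^y + \delta f\,\Delta t_n\bigr|^2\bigr] + \Delta t_n\,\mathbb{E}\bigl[\bigl|\hat Z_n^\Delta - \phi_n^z\bigr|^2\bigr] + \mathcal{R}_n^y, \]
with $\mathcal{R}_n^y$ a $\theta$-independent remainder, and an analogous identity for $\mathbf{L}_n^{z,\Delta}$ carrying $\hat Z_n^\Delta,\hat\Gamma_n^\Delta$ in place of $\hat Y_n^\Delta,\hat Z_n^\Delta$ and a remainder tied to $\varepsilon^{DZ}(|\Delta|)$ from \eqref{eq25}. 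The Lipschitz perturbations $\delta f$ and $\delta f_D$ produced by $f$ and $f_D$ through \eqref{eq20} — which couple $\phi^y,\phi^z,\phi^\gamma$ — would be absorbed with Young's inequality at the cost of $O(\Delta t_n)$ factors, invoking Lemma~\ref{lemma2} for the boundedness of the Malliavin derivatives entering $f_D$.

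\textbf{Step 2 (local error) and Step 3 (propagation).} Because $\hat\theta_n$ minimises the combined loss and the projected functions $\hat y_n,\hat z_n,\hat\gamma_n$ of \eqref{eq27} are attainable up to the approximation errors \eqref{eq30}, the minimal loss is bounded by $\omega_1\varepsilon_n^y + (\omega_1\Delta t_n + \omega_2)\varepsilon_n^z + \omega_2\Delta t_n\varepsilon_n^\gamma$ plus the $\theta$-independent remainders; reading off each nonnegative summand of the Step~1 decomposition then yields local bounds on $\mathbb{E}[|\hat Y_n^\Delta - Y_n^{\Delta,\hat\theta}|^2]$, $\mathbb{E}[|\hat Z_n^\Delta - Z_n^{\Delta,\hat\theta}|^2]$ and $\mathbb{E}[|(\hat\Gamma_n^\Delta - \Gamma_n^{\Delta,\hat\theta})b|^2]$. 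Next I would compare the projections \eqref{eq26} with the true solution: taking $\mathbb{E}_n[\cdot]$ in \eqref{eq13} and \eqref{eq18} and subtracting the exact dynamics \eqref{eq8} and \eqref{eq10}, the Euler estimates \eqref{eq23}--\eqref{eq24}, the path regularity of Theorem~\ref{theorem5}, the mean-square continuity \eqref{eq21} and the $\mathbb{L}^2$-regularity \eqref{eq25} give one-step errors of order $|\Delta|$ and $\varepsilon^{DZ}(|\Delta|)$. Combining with Step~2 produces coupled recursions $e_n \le (1+C\Delta t_n)\,e_{n+1} + (\text{local terms})$ for the $Y$-error and the $Z/DZ$-error, to which discrete Gronwall applies; the $1/\Delta t_n \sim N$ amplification of $\omega_1\varepsilon_n^y$ and $\omega_2\varepsilon_n^z$ appears precisely when extracting the control ($Z$, resp.\ $\Gamma$) from a single increment forces a division by $\Delta t_n$, while $\omega_2\varepsilon_n^y$, $\omega_1\varepsilon_n^z$ and $\omega_2\varepsilon_n^\gamma$ enter the recursions directly and remain $O(1)$.

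The hardest part will be the bookkeeping of the weights $\omega_1,\omega_2$ through the coupling. The driver $f_D$ in \eqref{eq18} ties $\phi^\gamma,\phi^z,\phi^y$ together via $\nabla_x f\,D_nX$, $\nabla_y f\,D_nY$ and $\nabla_z f\,\Gamma D_nX$, so the two error recursions do not decouple; one must run a simultaneous discrete Gronwall estimate while keeping the powers of $\Delta t_n$ aligned so that exactly $\omega_1\varepsilon_n^y$ and $\omega_2\varepsilon_n^z$ — and no other combination — acquire the factor $N$. Tracking the mixed terminal contributions $\mathbb{E}[|g(X_T)-g(X_N^\Delta)|^2]$ and $\mathbb{E}[|\nabla_x g(X_T)-\nabla_x g(X_N^\Delta)|^2]$ through both recursions, and confirming that the $\theta$-independent remainders collapse to $|\Delta| + \varepsilon^{DZ}(|\Delta|)$, completes the estimate.
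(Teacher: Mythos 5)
Your plan follows essentially the same route as the paper's proof: the orthogonal (martingale-representation) decomposition of the loss into a network-dependent part plus $\theta$-independent remainders, the optimality-of-$\hat{\theta}_n$ argument yielding local bounds weighted by $\omega_1,\omega_2$ and $\Delta t_n$, the one-step consistency of the conditional-expectation projections via the Euler, path-regularity and $\mathbb{L}^2$-regularity estimates, and the coupled discrete Gronwall recursion that assigns the factor $N$ to $\omega_1\varepsilon_n^{y}$ and $\omega_2\varepsilon_n^{z}$ exactly as you predict. The paper merely orders the steps differently (propagation first, loss decomposition second) and finishes the $D_nZ$ component of the total error with a telescoping of conditional variances, but the substance is identical.
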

\begin{proof}
In the following, $C$ denotes a positive generic constant independent of $\Delta$, which may take different values from line to line.

{\it Step 1.} Let us fix $n \in \{0, 1, \ldots, N-1\}$. After taking $\mathbb{E}_n\left[ \cdot \right]$ in~\eqref{eq12} and using the relation for $\hat{Y}_n^{\Delta}$ in~\eqref{eq26}, we get
\begin{equation*}
   Y_n - \hat{Y}_n^{\Delta} = \mathbb{E}_n\left[Y_{n+1} - Y_{n+1}^{\Delta, \hat{\theta}}\right] + \mathbb{E}_n\left[ \int_{t_{n}}^{t_{n+1}} f\left(s, \mathbf{X}_s\right) - f\left(t_n, \hat{\mathbf{X}}_n^{\Delta}\right)\,ds \right].
\end{equation*} 
Using the Jensen inequality for the second term above and then the $H^2\left([0, T]; \mathbb{R} \right)$ Cauchy-Schwarz inequality, we have
\begin{align*}
    \left\vert Y_n - \hat{Y}_n^{\Delta} \right\vert & \leq  \left\vert \mathbb{E}_n\left[ Y_{n+1} - Y_{n+1}^{\Delta, \hat{\theta}} \right]  \right\vert + \left(\Delta t_n\right)^\frac{1}{2}\left( \mathbb{E}_n\left[  \int_{t_{n}}^{t_{n+1}} \left\vert f\left(s, \mathbf{X}_s\right)  - f\left(t_n, \hat{\mathbf{X}}_n^{\Delta}\right)\right\vert^2 ds  \right] \right)^\frac{1}{2}.
\end{align*}
Using again the Jensen inequality for the first term above and the Young inequality of the form
\begin{equation}
  \left( c_1 + c_2\right)^2 \leq \left(1 + \nu \Delta t_n \right)c_1^2 + \left( 1 + \frac{1}{\nu \Delta t_n} \right) c_2^2, \quad \nu >0, 
  \label{eq31}
\end{equation}
we get (after taking $\mathbb{E}\left[ \cdot\right]$) that
\begin{align*}
    \mathbb{E}\left[\left\vert Y_n - \hat{Y}_n^{\Delta} \right\vert^2\right] & \leq   \left(1 + \nu \Delta t_n \right) \mathbb{E}\left[\left\vert Y_{n+1} - Y_{n+1}^{\Delta, \hat{\theta}} \right\vert^2\right] \\
    & \quad + \frac{1}{\nu}\left( 1 +  \nu \Delta t_n \right) \mathbb{E}\left[  \int_{t_{n}}^{t_{n+1}} \left\vert f\left(s, \mathbf{X}_s\right) - f\left(t_n, \hat{\mathbf{X}}_n^{\Delta}\right)\right\vert^2 ds  \right].
\end{align*}
The Lipschitz and Hölder continuity of $f$ in~\eqref{eq20} and the inequality $\left(\sum_{\ifrac=1}^4 c_{\ifrac}\right)^2 \leq 4 \sum_{\ifrac=1}^4 c_{\ifrac}^2$ yields
\begin{equation}
    \begin{aligned}
    & \mathbb{E}\left[ \int_{t_{n}}^{t_{n+1}} \left\vert f\left(s, \mathbf{X}_s\right) -  f\left(t_n, \hat{\mathbf{X}}_n^{\Delta}\right) \right\vert^2 ds \right]\\
    & \leq 4 L_{f}^2 \left( \int_{t_{n}}^{t_{n+1}} \left\vert s - t_n \right\vert ds + \int_{t_{n}}^{t_{n+1}} \mathbb{E}\left[\left\vert X_s - X_n^{\Delta} \right\vert^2\right] ds + \int_{t_{n}}^{t_{n+1}} \mathbb{E}\left[\left\vert Y_s - \hat{Y}_n^{\Delta} \right\vert^2\right] ds \right.\\
    & \left. \quad + \int_{t_{n}}^{t_{n+1}} \mathbb{E}\left[\left\vert Z_s - \hat{Z}_n^{\Delta} \right\vert^2\right] ds\right)
\end{aligned}
\label{eq32}
\end{equation}
Due to the mean squared continuities~\eqref{eq21} and the inequality $\left( c_1 + c_2 \right)^2 \leq 2\left( c_1^2 + c_2^2 \right)$, we have
\begin{align*}
    \int_{t_{n}}^{t_{n+1}} \mathbb{E}\left[\left\vert X_s - X_n^{\Delta} \right\vert^2\right] ds &= \int_{t_{n}}^{t_{n+1}} \mathbb{E}\left[\left\vert \left(X_s - X_n\right) + \left( X_n - X_n^{\Delta}\right) \right\vert^2\right] ds,\\
    & \leq \int_{t_{n}}^{t_{n+1}} \mathbb{E}\left[ \left( \left\vert X_s - X_n \right\vert  + \left\vert X_n - X_n^{\Delta}\right\vert \right)^2\right] ds,\\        
    & \leq C |\Delta|^2 + 2 \Delta t_n \mathbb{E}\left[ \left\vert  X_n - X_n^{\Delta} \right\vert^2 \right].
\end{align*}
Performing similar calculations for other terms in~\eqref{eq32}, we gather
\begin{equation}
\begin{aligned}
    \mathbb{E}\left[\left\vert Y_n - \hat{Y}_n^{\Delta} \right\vert^2\right] & \leq \left( 1 +  \nu \Delta t_n \right) \mathbb{E}\left[ \left\vert Y_{n+1} - Y_{n+1}^{\Delta, \hat{\theta}} \right\vert^2\right]\\
    & \quad + \frac{4 L_{f}^2}{\nu} \left( 1 +  \nu \Delta t_n \right) \left\{ \vphantom{\mathbb{E}\left[ \left\vert \Delta Z_n^{\Delta} \right\vert^2 \right]} C |\Delta|^2 + 2\Delta t_n \left(\mathbb{E}\left[ \left\vert Y_n - \hat{Y}_n^{\Delta} \right\vert^2 \right] +\mathbb{E}\left[ \left\vert Z_n - \hat{Z}_n^{\Delta} \right\vert^2 \right] \right) \right\},
\end{aligned}
\label{eq33}
\end{equation}
where we also used~\eqref{eq23}.

{\it Step 2.} By taking $\mathbb{E}_n\left[ \cdot \right]$ in~\eqref{eq15} and using the relation for $\hat{Z}_n^{\Delta}$ in~\eqref{eq26}, we have
\begin{equation*}
   Z_n - \hat{Z}_n^{\Delta} = \mathbb{E}_n\left[ D_n Y_{n+1} - D_n Y_{n+1}^{\Delta, \hat{\theta}} \right] + \mathbb{E}_n\left[ \int_{t_{n}}^{t_{n+1}} f_D\left(s, \mathbf{X}_s, \mathbf{D}_n \mathbf{X}_s\right) - f_D\left(t_n, \hat{\mathbf{X}}_n^{\Delta}, \mathbf{D}_n \hat{\mathbf{X}}_n^{\Delta}\right)\,ds \right],
\end{equation*} 
where $Z_n = D_n Y_n$ due to Theorem~\ref{theorem4}. Similarly as in the previous step, namely applying Jensen inequality for the second term above, using the $H^2\left([0, T]; \mathbb{R}^d \right)$ Cauchy-Schwarz inequality and the Young inequality~\eqref{eq31}, we have
\begin{align*}
    \mathbb{E}\left[\left\vert Z_n - \hat{Z}_n^{\Delta} \right\vert^2\right] & \leq   \left(1 + \nu \Delta t_n \right) \mathbb{E}\left[\left\vert \mathbb{E}_n\left[ D_n Y_{n+1} - D_n Y_{n+1}^{\Delta, \hat{\theta}} \right]  \right\vert^2\right] \\
    & \quad + \frac{1}{\nu}\left( 1 +  \nu \Delta t_n \right) \mathbb{E}\left[  \int_{t_{n}}^{t_{n+1}} \left\vert f_D\left(s, \mathbf{X}_s, \mathbf{D}_n \mathbf{X}_s\right) - f_D\left(t_n, \hat{\mathbf{X}}_n^{\Delta}, \mathbf{D}_n \hat{\mathbf{X}}_n^{\Delta}\right)\right\vert^2 ds  \right].
\end{align*}
From the Lipschitz and Hölder continuity of $f_D$ in~\eqref{eq20} and the inequality $\left(\sum_{\ifrac=1}^7 c_{\ifrac}\right)^2 \leq 8 \sum_{\ifrac=1}^7 c_{\ifrac}^2$, we get
\begin{equation*}
    \begin{aligned}
    & \mathbb{E}\left[ \int_{t_{n}}^{t_{n+1}} \left\vert f_D\left(s, \mathbf{X}_s, \mathbf{D}_n \mathbf{X}_s\right) -  f_D\left(t_n, \hat{\mathbf{X}}_n^{\Delta}, \mathbf{D}_n \hat{\mathbf{X}}_n^{\Delta}\right) \right\vert^2 ds \right]\\
    & \leq 8 L_{f_D}^2 \left( \int_{t_{n}}^{t_{n+1}} \left\vert s - t_n \right\vert ds + \int_{t_{n}}^{t_{n+1}} \mathbb{E}\left[\left\vert X_s - X_n^{\Delta} \right\vert^2\right] ds + \int_{t_{n}}^{t_{n+1}} \mathbb{E}\left[\left\vert Y_s - \hat{Y}_n^{\Delta} \right\vert^2\right] ds \right.\\
    & \left. \quad + \int_{t_{n}}^{t_{n+1}} \mathbb{E}\left[\left\vert Z_s - \hat{Z}_n^{\Delta} \right\vert^2\right] ds + \int_{t_{n}}^{t_{n+1}} \mathbb{E}\left[\left\vert D_n X_s - D_n X_n^{\Delta} \right\vert^2\right] ds\right.\\
    & \left. \quad + \int_{t_{n}}^{t_{n+1}} \mathbb{E}\left[\left\vert D_n Y_s - D_n \hat{Y}_n^{\Delta} \right\vert^2\right] ds + \int_{t_{n}}^{t_{n+1}} \mathbb{E}\left[\left\vert D_n Z_s - D_n \hat{Z}_n^{\Delta} \right\vert^2\right] ds \vphantom{\int_{t_{n}}^{t_{n+1}}} \right).
\end{aligned}
\end{equation*}
The mean squared continuities~\eqref{eq21} and the inequality $\left( c_1 + c_2 \right)^2 \leq 2\left( c_1^2 + c_2^2 \right)$ yields
\begin{align*}
    \mathbb{E}\left[\left\vert Z_n - \hat{Z}_n^{\Delta} \right\vert^2\right] & \leq   \left(1 + \nu \Delta t_n \right) \mathbb{E}\left[\left\vert \mathbb{E}_n\left[ D_n Y_{n+1} - D_n Y_{n+1}^{\Delta, \hat{\theta}} \right]  \right\vert^2\right] \\
    & \quad + \frac{8 L_{f_D}^2}{\nu} \left( 1 +  \nu \Delta t_n \right) \left\{ \vphantom{\mathbb{E}\left[\int_{t_{n}}^{t_{n+1}}\right]} C |\Delta|^2 + 2 \Delta t_n\left( \mathbb{E}\left[ \left\vert X_n - X_n^{\Delta} \right\vert^2 \right] + \mathbb{E}\left[ \left\vert Y_n - \hat{Y}_n^{\Delta} \right\vert^2 \right] \right. \right. \\
    & \left. \left. \quad + \mathbb{E}\left[ \left\vert Z_n - \hat{Z}_n^{\Delta} \right\vert^2 \right]  \right)  + 2 \Delta t_n\left( \mathbb{E}\left[ \left\vert D_n X_n - D_n X_n^{\Delta} \right\vert^2 \right] + \mathbb{E}\left[ \left\vert D_n Y_n - D_n \hat{Y}_n^{\Delta} \right\vert^2 \right]\right) \right.\\
    & \left. \quad + \mathbb{E}\left[\int_{t_{n}}^{t_{n+1}} \left\vert D_n Z_s - D_n \hat{Z}_n^{\Delta} \right\vert^2 ds\right] \right\}.
\end{align*}
By using~\eqref{eq23}, $D_n Y_n - D_n \hat{Y}_n^{\Delta} = Z_n - \hat{Z}_n^{\Delta}$ and $\mathbb{E}\left[ \left\vert D_n X_n - D_n X_n^{\Delta} \right\vert^2 \right] = 0$ (due to Assumption~\ref{AX4}), we have
\begin{align*}
    \mathbb{E}\left[\left\vert Z_n - \hat{Z}_n^{\Delta} \right\vert^2\right] & \leq   \left(1 + \nu \Delta t_n \right) \mathbb{E}\left[\left\vert \mathbb{E}_n\left[ D_n Y_{n+1} - D_n Y_{n+1}^{\Delta, \hat{\theta}} \right]  \right\vert^2\right] \\
    & \quad + \frac{8 L_{f_D}^2}{\nu} \left( 1 +  \nu \Delta t_n \right) \left\{ \vphantom{\mathbb{E}\left[\int_{t_{n}}^{t_{n+1}}\right]} C |\Delta|^2 + 2 \Delta t_n\left(\mathbb{E}\left[ \left\vert Y_n - \hat{Y}_n^{\Delta} \right\vert^2 \right]+ 2\mathbb{E}\left[ \left\vert Z_n - \hat{Z}_n^{\Delta} \right\vert^2 \right]  \right) \right. \\
    & \left. \quad + \mathbb{E}\left[\int_{t_{n}}^{t_{n+1}} \left\vert D_n Z_s - D_n \hat{Z}_n^{\Delta} \right\vert^2 ds\right] \right\}.
\end{align*}
By the definition of $\hat{DZ}_n$ in~\eqref{eq25}, the last term above is given as
\begin{equation}
        \mathbb{E}\left[\int_{t_{n}}^{t_{n+1}} \left\vert D_n Z_s - D_n \hat{Z}_n^{\Delta} \right\vert^2 ds\right] =\mathbb{E}\left[\int_{t_{n}}^{t_{n+1}} \left\vert D_n Z_s - \hat{DZ}_n \right\vert^2 ds\right] + \Delta t_n \mathbb{E}\left[\left\vert \hat{DZ}_n - D_n \hat{Z}_n^{\Delta} \right\vert^2\right].
\label{eq34}
\end{equation}
Hence, we get
\begin{equation}
\begin{aligned}
    \mathbb{E}\left[\left\vert Z_n - \hat{Z}_n^{\Delta} \right\vert^2\right] & \leq   \left(1 + \nu \Delta t_n \right) \mathbb{E}\left[\left\vert \mathbb{E}_n\left[ D_n Y_{n+1} - D_n Y_{n+1}^{\Delta, \hat{\theta}} \right]  \right\vert^2\right] \\
    & \quad + \frac{8 L_{f_D}^2}{\nu} \left( 1 +  \nu \Delta t_n \right) \left\{ \vphantom{\mathbb{E}\left[\int_{t_{n}}^{t_{n+1}}\right]} C |\Delta|^2 + 2 \Delta t_n\left(\mathbb{E}\left[ \left\vert Y_n - \hat{Y}_n^{\Delta} \right\vert^2 \right]+ 2\mathbb{E}\left[ \left\vert  Z_n - \hat{Z}_n^{\Delta} \right\vert^2 \right]  \right) \right. \\
    & \left. \quad + \mathbb{E}\left[\int_{t_{n}}^{t_{n+1}} \left\vert D_n Z_s - \hat{DZ}_n \right\vert^2 ds\right] + \Delta t_n \mathbb{E}\left[\left\vert \hat{DZ}_n - D_n \hat{Z}_n^{\Delta} \right\vert^2\right] \right\}.
\end{aligned}
\label{eq35}
\end{equation}
Next, we find an upper bound for $\Delta t_n \mathbb{E}\left[\left\vert \hat{DZ}_n - D_n \hat{Z}_n^{\Delta} \right\vert^2\right]$ in~\eqref{eq35}. By multiplying both sides of~\eqref{eq15} with $\Delta W_n$, taking $\mathbb{E}_n[\cdot]$, using the It\^o's isometry and~\eqref{eq25}, we have together with~\eqref{eq26}
\begin{equation*} 
    \begin{aligned}
    \Delta t_n \left(\hat{DZ}_n - D_n \hat{Z}_n^{\Delta} \right)& = \mathbb{E}_n\left[  \Delta W_n \left(D_n Y_{n+1} - D_n Y_{n+1}^{\Delta, \hat{\theta}} - \mathbb{E}_n\left[   D_n Y_{n+1} - D_n Y_{n+1}^{\Delta, \hat{\theta}} \right] \right) \right]\\
    & \quad + \mathbb{E}_n\left[ \Delta W_n \int_{t_{n}}^{t_{n+1}} f_D\left(s, \mathbf{X}_s, \mathbf{D}_n \mathbf{X}_s\right) ds \right],
    \end{aligned} 
\end{equation*} 
after rewriting the relation for $\hat{\Gamma}_n^{\Delta}$ in~\eqref{eq26} as $\Delta t_n D_n \hat{Z}_n^{\Delta} = \mathbb{E}_n\left[ \Delta W_n D_n Y_{n+1}^{\Delta, \hat{\theta}} \right]$ and that $\mathbb{E}_n\left[  \Delta W_n \mathbb{E}_n\left[ D_n Y_{n+1} - D_n Y_{n+1}^{\Delta, \hat{\theta}}\right] \right] = 0$. The conditional $\mathbb{L}^2\left( \Omega; \mathbb{R}^d \right)$ Cauchy-Schwarz inequality in the Frobenius norm and the independence of Brownian motion increments implies
\begin{align*}
    \Delta t_n \left\vert \hat{DZ}_n - D_n \hat{Z}_n^{\Delta} \right\vert & \leq \left( d \Delta t_n \right)^\frac{1}{2} \left( \mathbb{E}_n\left[ \left\vert D_n Y_{n+1} - D_n Y_{n+1}^{\Delta, \hat{\theta}} - \mathbb{E}_n\left[ D_n Y_{n+1} - D_n Y_{n+1}^{\Delta, \hat{\theta}} \right] \right\vert^2 \right]  \right)^\frac{1}{2}\\
    & \quad + \left( d \Delta t_n \right)^\frac{1}{2}\left(\mathbb{E}_n\left[ \left\vert \int_{t_{n}}^{t_{n+1}} f_D\left(s, \mathbf{X}_s, \mathbf{D}_n \mathbf{X}_s\right)ds \right \vert^2 \right] \right)^\frac{1}{2}.
\end{align*} 
By applying the $H^2\left([0, T];\mathbb{R}^d\right)$ Cauchy-Schwarz inequality for the last term above, using the inequality $\left( c_1 + c_2 \right)^2 \leq 2\left( c_1^2 + c_2^2 \right)$ and the law of total expectation yields
\begin{equation}    
\begin{aligned}
    \Delta t_n \mathbb{E}\left[\left\vert \hat{DZ}_n - D_n \hat{Z}_n^{\Delta} \right\vert^2 \right] & \leq 2d\left( \mathbb{E}\left[ \left\vert D_n Y_{n+1} - D_n Y_{n+1}^{\Delta, \hat{\theta}} \right\vert^2\right] - \mathbb{E}\left[\left\vert \mathbb{E}_n\left[   D_n Y_{n+1} - D_n Y_{n+1}^{\Delta, \hat{\theta}} \right] \right\vert^2 \right]  \right)\\
    & \quad + 2d \Delta t_n \mathbb{E}\left[ \int_{t_{n}}^{t_{n+1}} \left\vert f_D\left(s, \mathbf{X}_s, \mathbf{D}_n \mathbf{X}_s\right) \right\vert^2 ds \right].
\end{aligned}
\label{eq36}
\end{equation}
Using the upper bound~\eqref{eq36} in~\eqref{eq35} and choosing $\nu \equiv \check{\nu} = 16 d L_{f_D}^2$, this implies
\begin{equation}    
\begin{aligned}
    \mathbb{E}\left[\left\vert Z_n - \hat{Z}_n^{\Delta} \right\vert^2\right] & \leq \left( 1 +  \check{\nu} \Delta t_n \right) \mathbb{E}\left[ \left\vert D_n Y_{n+1} - D_n Y_{n+1}^{\Delta, \hat{\theta}} \right\vert^2\right]\\
    & \quad + \frac{1}{2d} \left( 1 +  \check{\nu} \Delta t_n \right) \left\{ \vphantom{\mathbb{E}\left[\int_{t_{n}}^{t_{n+1}}\right]} C |\Delta|^2 + 2\Delta t_n\left(\mathbb{E}\left[ \left\vert Y_n - \hat{Y}_n^{\Delta} \right\vert^2 \right] + 2 \mathbb{E}\left[ \left\vert Z_n - \hat{Z}_n^{\Delta} \right\vert^2 \right]\right)\right. \\ 
    &\quad \left. + \mathbb{E}\left[\int_{t_{n}}^{t_{n+1}} \left\vert D_n Z_s - \hat{DZ}_n \right\vert^2 ds\right] \right\}\\
    &\quad + \left( 1 +  \check{\nu} \Delta t_n \right) \Delta t_n \mathbb{E}\left[ \int_{t_{n}}^{t_{n+1}} \left\vert f_D\left(s, \mathbf{X}_s, \mathbf{D}_n \mathbf{X}_s\right) \right\vert^2 ds \right].
\end{aligned}
\label{eq37}
\end{equation}

{\it Step 3.} Combining~\eqref{eq33} and~\eqref{eq37} gives
\begin{align*}
    & \mathbb{E}\left[\left\vert Y_n - \hat{Y}_n^{\Delta} \right\vert^2\right] + \mathbb{E}\left[\left\vert Z_n - \hat{Z}_n^{\Delta} \right\vert^2\right] \\
    & \leq \left( 1 +  \nu \Delta t_n \right) \mathbb{E}\left[ \left\vert Y_{n+1} - Y_{n+1}^{\Delta, \hat{\theta}} \right\vert^2\right] +  \left( 1 +  \check{\nu} \Delta t_n \right) \mathbb{E}\left[ \left\vert D_n Y_{n+1} - D_n Y_{n+1}^{\Delta, \hat{\theta}} \right\vert^2\right]\\
    &\quad + \frac{4 L_{f}^2}{\nu} \left( 1 +  \nu \Delta t_n \right) \left\{ C |\Delta|^2 + \Delta t_n \left(\mathbb{E}\left[ \left\vert Y_n - \hat{Y}_n^{\Delta} \right\vert^2 \right] +\mathbb{E}\left[ \left\vert Z_n - \hat{Z}_n^{\Delta} \right\vert^2 \right] \right) \right\}\\
    &\quad + \frac{1}{2d} \left( 1 +  \check{\nu} \Delta t_n \right)\left\{ C |\Delta|^2 +4\Delta t_n \left(\mathbb{E}\left[ \left\vert Y_n - \hat{Y}_n^{\Delta} \right\vert^2 \right] + \mathbb{E}\left[ \left\vert Z_n - \hat{Z}_n^{\Delta} \right\vert^2 \right]\right) \right.\\
    &\quad \left. + \mathbb{E}\left[\int_{t_{n}}^{t_{n+1}} \left\vert D_n Z_s - \hat{DZ}_n \right\vert^2 ds\right]\right\}\\
    &\quad + \left( 1 +  \check{\nu} \Delta t_n \right) \Delta t_n\mathbb{E}\left[ \int_{t_{n}}^{t_{n+1}} \left\vert f_D\left(s, \mathbf{X}_s, \mathbf{D}_n \mathbf{X}_s\right) \right\vert^2 ds \right].
\end{align*}
Moreover,
\begin{align*}
    & \left(1 - \left( \frac{4 L_{f}^2}{\nu} \left( 1 +  \nu \Delta t_n \right) + \frac{2}{d} \left( 1 +  \check{\nu} \Delta t_n \right) \right)\Delta t_n \right)\left(\mathbb{E}\left[\left\vert Y_n - \hat{Y}_n^{\Delta} \right\vert^2\right] + \mathbb{E}\left[\left\vert Z_n - \hat{Z}_n^{\Delta} \right\vert^2\right] \right)\\
    & \leq \left( 1 +  \nu \Delta t_n \right) \mathbb{E}\left[ \left\vert Y_{n+1} - Y_{n+1}^{\Delta, \hat{\theta}} \right\vert^2\right] +  \left( 1 +  \check{\nu} \Delta t_n \right) \mathbb{E}\left[ \left\vert D_n Y_{n+1} - D_n Y_{n+1}^{\Delta, \hat{\theta}} \right\vert^2\right]\\
    & \quad + \frac{4 L_{f}^2}{\nu} \left( 1 +  \nu \Delta t_n \right) C |\Delta|^2 + \frac{1}{2d}\left( 1 +  \check{\nu} \Delta t_n \right) \left\{ C |\Delta|^2 + \mathbb{E}\left[\int_{t_{n}}^{t_{n+1}} \left\vert D_n Z_s - \hat{DZ}_n \right\vert^2 ds\right]\right\}\\
    &\quad + \left( 1 +  \check{\nu} \Delta t_n \right) \Delta t_n\mathbb{E}\left[ \int_{t_{n}}^{t_{n+1}} \left\vert f_D\left(s, \mathbf{X}_s, \mathbf{D}_n \mathbf{X}_s\right) \right\vert^2 ds \right].
\end{align*}
Then, for any given $\nu >0$ and $|\Delta|$ sufficiently small:
\begin{equation}
\begin{aligned}
    & \mathbb{E}\left[\left\vert Y_n - \hat{Y}_n^{\Delta} \right\vert^2\right] + \mathbb{E}\left[\left\vert Z_n - \hat{Z}_n^{\Delta} \right\vert^2\right]\\
    & \leq \left( 1 +  C |\Delta| \right) \mathbb{E}\left[ \left\vert Y_{n+1} - Y_{n+1}^{\Delta, \hat{\theta}} \right\vert^2\right] + \left( 1 +  C |\Delta| \right)\mathbb{E}\left[ \left\vert D_n Y_{n+1} - D_n Y_{n+1}^{\Delta, \hat{\theta}} \right\vert^2\right]+ C|\Delta|^2\\
     & \quad + C\mathbb{E}\left[\int_{t_{n}}^{t_{n+1}} \left\vert D_n Z_s - \hat{DZ}_n \right\vert^2 ds\right]+ C |\Delta| \mathbb{E}\left[ \int_{t_{n}}^{t_{n+1}} \left\vert f_D\left(s, \mathbf{X}_s, \mathbf{D}_n \mathbf{X}_s\right) \right\vert^2 ds \right].
\end{aligned}
\label{eq38}
\end{equation}
Using the Young inequality in the form
\begin{equation}
  \left(1 - \nu \right)c_1^2 - \frac{1}{\nu} c_2^2 \leq \left(1 - \nu \right)c_1^2 + \left( 1 - \frac{1}{\nu} \right) c_2^2 \leq \left( c_1 + c_2\right)^2,\quad \nu>0,
  \label{eq39}
\end{equation}
we have for $\nu = |\Delta|$ that
\begin{align*}
    \left(1 - |\Delta| \right) \mathbb{E}\left[\left\vert Y_n - Y_n^{\Delta, \hat{\theta}} \right\vert^2 \right] - \frac{1}{|\Delta|} \mathbb{E}\left[\left\vert Y_n^{\Delta, \hat{\theta}} - \hat{Y}_n^{\Delta} \right\vert^2 \right] & \leq \mathbb{E}\left[\left\vert Y_n - \hat{Y}_n^{\Delta} \right\vert^2 \right],\\
    \left(1 - |\Delta| \right) \mathbb{E}\left[\left\vert Z_n - Z_n^{\Delta, \hat{\theta}} \right\vert^2 \right] - \frac{1}{|\Delta|} \mathbb{E}\left[\left\vert Z_n^{\Delta, \hat{\theta}} - \hat{Z}_n^{\Delta} \right\vert^2 \right] & \leq \mathbb{E}\left[\left\vert Z_n - \hat{Z}_n^{\Delta} \right\vert^2 \right].    
\end{align*}
Then, for small enough $|\Delta|$,~\eqref{eq38} becomes
\begin{align*}
    & \mathbb{E}\left[\left\vert Y_n - Y_n^{\Delta, \hat{\theta}} \right\vert^2\right] + \mathbb{E}\left[\left\vert Z_n - Z_n^{\Delta, \hat{\theta}} \right\vert^2\right]\\
    & \leq \left( 1 +  C|\Delta| \right) \mathbb{E}\left[ \left\vert Y_{n+1} - Y_{n+1}^{\Delta, \hat{\theta}} \right\vert^2\right] + \left( 1 +  C |\Delta| \right)\mathbb{E}\left[ \left\vert D_n Y_{n+1} - D_n Y_{n+1}^{\Delta, \hat{\theta}} \right\vert^2\right]+ C |\Delta|^2\\
    &\quad + C \mathbb{E}\left[\int_{t_{n}}^{t_{n+1}} \left\vert D_n Z_s - \hat{DZ}_n \right\vert^2 ds\right]+ C |\Delta|\mathbb{E}\left[ \int_{t_{n}}^{t_{n+1}} \left\vert f_D\left(s, \mathbf{X}_s, \mathbf{D}_n \mathbf{X}_s\right) \right\vert^2 ds \right]\\
     & \quad + C N \mathbb{E}\left[\left\vert \hat{Y}_n^{\Delta} - Y_n^{\Delta, \hat{\theta}} \right\vert^2 \right] + C N \mathbb{E}\left[\left\vert \hat{Z}_n^{\Delta} - Z_n^{\Delta, \hat{\theta}} \right\vert^2 \right].
\end{align*}
Using the discrete Grönwall lemma in the above equation, we get
\begin{align*}    
    & \max_{0 \leq n \leq N}\mathbb{E}\left[\left\vert Y_n - Y_n^{\Delta, \hat{\theta}} \right\vert^2\right] + \max_{0 \leq n \leq N} \mathbb{E}\left[\left\vert Z_n - Z_n^{\Delta, \hat{\theta}} \right\vert^2\right]\\
    & \leq C\mathbb{E}\left[ \left\vert g(X_T) - g\left(X_N^{\Delta}\right) \right\vert^2\right] + C \mathbb{E}\left[ \left\vert \nabla_x g(X_T) D_{N-1} X_N - \nabla_x g\left(X_N^{\Delta}\right) D_{N-1} X_N^{\Delta} \right\vert^2\right]\\
     & \quad + C|\Delta| + C\sum_{n=0}^{N-1} \mathbb{E}\left[\int_{t_{n}}^{t_{n+1}} \left\vert D_n Z_s - \hat{DZ}_n \right\vert^2 ds\right]\\
    &\quad + C |\Delta| \sum_{n=0}^{N-1} \mathbb{E}\left[ \int_{t_{n}}^{t_{n+1}} \left\vert f_D\left(s, \mathbf{X}_s, \mathbf{D}_n \mathbf{X}_s\right) \right\vert^2 ds \right]+ C N \sum_{n=0}^{N-1}\mathbb{E}\left[\left\vert \hat{Y}_n^{\Delta} - Y_n^{\Delta, \hat{\theta}} \right\vert^2 \right]\\
     & \quad  + C N \sum_{n=0}^{N-1} \mathbb{E}\left[\left\vert \hat{Z}_n^{\Delta} - Z_n^{\Delta, \hat{\theta}} \right\vert^2 \right].
\end{align*}
The second term in the above inequality can be written as
\begin{align*}    
    & \mathbb{E}\left[ \left\vert \nabla_x g(X_T) D_{N-1} X_N - \nabla_x g\left(X_N^{\Delta}\right) D_{N-1} X_N^{\Delta} \right\vert^2\right]\\
    & = \mathbb{E}\left[ \left\vert \left(\nabla_x g(X_T) - \nabla_x g\left(X_N^{\Delta}\right)\right)D_{N-1} X_N + \nabla_x g\left(X_N^{\Delta}\right)\left( D_{N-1} X_N - D_{N-1} X_N^{\Delta}\right)\right\vert^2\right],\\
    &\leq 2 \mathbb{E}\left[ \left\vert \nabla_x g(X_T) - \nabla_x g\left(X_N^{\Delta}\right)\right\vert^2 \left\vert D_{N-1} X_N\right\vert^2\right] + 2 \mathbb{E}\left[\left\vert \nabla_x g\left(X_N^{\Delta}\right)\right\vert^2\left \vert D_{N-1} X_N - D_{N-1} X_N^{\Delta}\right\vert^2\right],\\
    &\leq C \mathbb{E}\left[ \left\vert \nabla_x g(X_T) - \nabla_x g\left(X_N^{\Delta}\right)\right\vert^2 \right] + C \mathbb{E}\left[\left \vert D_{N-1} X_N - D_{N-1} X_N^{\Delta}\right\vert^2\right]
\end{align*}
where we used the submultiplicative property of the Frobenius norm and $\left( c_1 + c_2 \right)^2 \leq 2\left( c_1^2 + c_2^2 \right)$ for the first inequality, the boundedness of $DX$ and $\nabla_x g(X)$ for the second inequality. Therefore, we get from~\eqref{eq24} that 
\begin{equation}
    \begin{aligned}
    \mathbb{E}\left[ \left\vert \nabla_x g(X_T) D_{N-1} X_N - \nabla_x g\left(X_N^{\Delta}\right) D_{N-1} X_N^{\Delta} \right\vert^2\right] \leq C \mathbb{E}\left[ \left\vert \nabla_x g(X_T) - \nabla_x g\left(X_N^{\Delta}\right)\right\vert^2\right] + C |\Delta|.
    \end{aligned}
\label{eq40}
\end{equation}       
From~\eqref{eq22}, the $\mathbb{L}^2$-regularity of $DZ$~\eqref{eq25} and the inequality~\eqref{eq40}, we have
\begin{equation}    
\begin{aligned}    
    & \max_{0 \leq n \leq N}\mathbb{E}\left[\left\vert Y_n - Y_n^{\Delta, \hat{\theta}} \right\vert^2\right] + \max_{0 \leq n \leq N} \mathbb{E}\left[\left\vert Z_n - Z_n^{\Delta, \hat{\theta}} \right\vert^2\right]\\
    & \leq C \mathbb{E}\left[ \left\vert g(X_T) - g\left(X_N^{\Delta}\right) \right\vert^2\right] + C \mathbb{E}\left[ \left\vert \nabla_x g(X_T) - \nabla_x g\left(X_N^{\Delta}\right) \right\vert^2\right] +  C |\Delta| +  C \varepsilon^{DZ}\left( |\Delta| \right)\\
     & \quad 
     + C N \sum_{n=0}^{N-1} \mathbb{E}\left[\left\vert \hat{Y}_n^{\Delta} - Y_n^{\Delta, \hat{\theta}} \right\vert^2 \right] +  C N \sum_{n=0}^{N-1} \mathbb{E}\left[\left\vert \hat{Z}_n^{\Delta} - Z_n^{\Delta, \hat{\theta}} \right\vert^2 \right].
\end{aligned}
\label{eq41}
\end{equation}

{\it Step 4.} Let us fix $n \in \{0, 1, \ldots, N-1\}$. By using relations~\eqref{eq28} and~\eqref{eq29}, and recalling the definition of $\left( \hat{Z}_n^{\Delta}, D_n \hat{Z}_n^{\Delta}\right)$ as an $\mathbb{L}^2-$projection of $\left(\hat{Z}_t, D_n \hat{Z}_t\right)$, we can rewrite the loss function~\eqref{eq19} as
\begin{equation}    
    \begin{aligned}
        \mathbf{L}_n^{\Delta}\left( \theta_n \right) & = \omega_1\left(\hat{\mathbf{L}}^{y,\Delta}_n\left( \theta_n\right) + \mathbb{E}\left[ \int_{t_n}^{t_{n+1}} \left \vert \hat{Z}_s - \hat{Z}_n^{\Delta} \right \vert^2 ds \right]\right)\\
        &\quad + \omega_2\left( 
        \hat{\mathbf{L}}^{z,\Delta}_n\left( \theta_n\right) + \mathbb{E}\left[ \int_{t_n}^{t_{n+1}} \left \vert D_n \hat{Z}_s - D_n \hat{Z}_n^{\Delta} \right \vert^2 ds \right]\right),  
    \end{aligned}
\label{eq42}
\end{equation}
where
\begin{equation}    
    \begin{aligned}
        \hat{\mathbf{L}}^{y,\Delta}_n\left( \theta_n\right) & := \mathbb{E}\left[  \left \vert \hat{Y}_n^{\Delta} - \phi^y_n\left( X_n^{\Delta} ; \theta^y_n\right) + \left( f\left(t_n, \mathbf{X}_n^{\Delta, \theta}\right) - f\left(t_n, \hat{\mathbf{X}}_n^{\Delta}\right)\right)\Delta t_n \right \vert^2 \right]\\
        & \quad + \Delta t_n \mathbb{E}\left[  \left \vert \hat{Z}_n^{\Delta} - \phi^z_n\left( X_n^{\Delta} ; \theta^z_n\right) \right \vert^2 \right],
    \end{aligned}
\label{eq43}
\end{equation}
and 
\begin{equation}    
    \begin{aligned}
        \hat{\mathbf{L}}^{z,\Delta}_n\left( \theta_n\right) & := \mathbb{E}\left[  \left \vert \hat{Z}_n^{\Delta} - \phi^z_n\left( X_n^{\Delta} ; \theta^z_n\right) + \left( f_D\left(t_n, \mathbf{X}_n^{\Delta, \theta}, \mathbf{D}_n \mathbf{X}_n^{\Delta, \theta} \right) \right. \right. \right. \\ 
        &  \qquad \left. \left. \left. -  f_D\left(t_n, \hat{\mathbf{X}}_n^{\Delta}, \mathbf{D}_n \hat{\mathbf{X}}_n^{\Delta}\right)\right)\Delta t_n \right \vert^2 \right]\\
        & \quad + \Delta t_n \mathbb{E}\left[  \left \vert \left( \hat{\Gamma}_n^{\Delta} - \phi^{\gamma}_n\left( X_n^{\Delta} ; \theta^{\gamma}_n\right)\right) b\left(t_n, X_n^{\Delta}\right) \right \vert^2 \right].
    \end{aligned}
\label{eq44}
\end{equation}
By using the Young inequality~\eqref{eq31}, the Lipschitz and Hölder continuity of $f$~\eqref{eq20} and the inequality $\left( c_1 + c_2 \right)^2 \leq 2 \left( c_1^2 + c_2^2\right)$, we have that~\eqref{eq43} is bounded by
\begin{equation}    
    \hat{\mathbf{L}}^{y,\Delta}_n\left( \theta_n\right) \leq \left(1+C\Delta t_n \right) \mathbb{E}\left[  \left \vert \hat{Y}_n^{\Delta} - \phi^y_n\left( X_n^{\Delta} ; \theta^y_n\right) \right \vert^2 \right] + C \Delta t_n \mathbb{E}\left[  \left \vert \hat{Z}_n^{\Delta} - \phi^z_n\left( X_n^{\Delta} ; \theta^z_n\right) \right \vert^2 \right].
\label{eq45}
\end{equation}
By performing similar steps for~\eqref{eq44}, we get 
\begin{equation}
    \begin{aligned}        
    \hat{\mathbf{L}}^{z,\Delta}_n\left( \theta_n\right) & \leq C\Delta t_n \mathbb{E}\left[  \left \vert \hat{Y}_n^{\Delta} - \phi^y_n\left( X_n^{\Delta} ; \theta^y_n\right) \right \vert^2 \right] + \left(1+C\Delta t_n \right) \mathbb{E}\left[  \left \vert \hat{Z}_n^{\Delta} - \phi^z_n\left( X_n^{\Delta} ; \theta^z_n\right) \right \vert^2 \right]\\
    & \quad + C \Delta t_n \mathbb{E}\left[  \left \vert \left(\hat{\Gamma}_n^{\Delta} - \phi^{\gamma}_n\left( X_n^{\Delta} ; \theta^{\gamma}_n\right)\right) b(t_n, X_n^{\Delta}) \right \vert^2 \right].
    \end{aligned}
\label{eq46}
\end{equation}
We define $\hat{\mathbf{L}}_n^{\Delta} := \omega_1\hat{\mathbf{L}}^{y,\Delta}_n + \omega_2\hat{\mathbf{L}}^{z,\Delta}_n.$ Using the bounds in~\eqref{eq45} and~\eqref{eq46} yields
\begin{equation}
    \begin{aligned}        
    \hat{\mathbf{L}}_n^{\Delta}\left( \theta_n\right) & \leq \omega_1\left(1+C\Delta t_n \right) \mathbb{E}\left[  \left \vert \hat{Y}_n^{\Delta} - \phi^y_n\left( X_n^{\Delta} ; \theta^y_n\right) \right \vert^2 \right] + \omega_2C\Delta t_n  \mathbb{E}\left[  \left \vert \hat{Y}_n^{\Delta} - \phi^y_n\left( X_n^{\Delta} ; \theta^y_n\right) \right \vert^2 \right]\\
    &\quad+ \omega_2\left(1+C\Delta t_n \right)\mathbb{E}\left[  \left \vert \hat{Z}_n^{\Delta} - \phi^z_n\left( X_n^{\Delta} ; \theta^z_n\right) \right \vert^2 \right] + \omega_1C\Delta t_n \mathbb{E}\left[  \left \vert \hat{Z}_n^{\Delta} - \phi^z_n\left( X_n^{\Delta} ; \theta^z_n\right) \right \vert^2 \right]\\
    & \quad + \omega_2 C \Delta t_n \mathbb{E}\left[  \left \vert \left(\hat{\Gamma}_n^{\Delta} - \phi^{\gamma}_n\left( X_n^{\Delta} ; \theta^{\gamma}_n\right)\right) b(t_n, X_n^{\Delta}) \right \vert^2 \right].
    \end{aligned}
\label{eq47}
\end{equation}
Similarly as above, we get
\begin{equation}    
    \hat{\mathbf{L}}^{y,\Delta}_n\left( \theta_n\right) \geq \left(1- C \Delta t_n \right) \mathbb{E}\left[  \left \vert \hat{Y}_n^{\Delta} - \phi^y_n\left( X_n^{\Delta} ; \theta^y_n\right) \right \vert^2 \right]  + \frac{\Delta t_n}{2} \mathbb{E}\left[  \left \vert \hat{Z}_n^{\Delta} - \phi^z_n\left( X_n^{\Delta} ; \theta^z_n\right) \right \vert^2 \right],
\label{eq48}
\end{equation}
and
\begin{equation}    
    \begin{aligned}        
    \hat{\mathbf{L}}^{z,\Delta}_n\left( \theta_n\right) & \geq \left(1- C \Delta t_n \right) \mathbb{E}\left[  \left \vert \hat{Z}_n^{\Delta} - \phi^z_n\left( X_n^{\Delta} ; \theta^z_n\right) \right \vert^2 \right] - \frac{\Delta t_n}{2}\mathbb{E}\left[  \left \vert \hat{Y}_n^{\Delta} - \phi^y_n\left( X_n^{\Delta} ; \theta^y_n\right) \right \vert^2 \right]\\
    &\quad + \frac{\Delta t_n}{2} \mathbb{E}\left[  \left \vert \left(\hat{\Gamma}_n^{\Delta} - \phi^{\gamma}_n\left( X_n^{\Delta} ; \theta^{\gamma}_n\right)\right) b(t_n, X_n^{\Delta}) \right \vert^2 \right].
    \end{aligned}
\label{eq49}
\end{equation}
By using the bounds in~\eqref{eq48} and~\eqref{eq49}, we have for $\hat{\mathbf{L}}_n^{\Delta}$ that
\begin{equation}
    \begin{aligned}        
    \hat{\mathbf{L}}_n^{\Delta}\left( \theta_n\right) & \geq \omega_1\left(1- C \Delta t_n \right) \mathbb{E}\left[  \left \vert \hat{Y}_n^{\Delta} - \phi^y_n\left( X_n^{\Delta} ; \theta^y_n\right) \right \vert^2 \right] - \frac{ \omega_2}{2}\Delta t_n\mathbb{E}\left[  \left \vert \hat{Y}_n^{\Delta} - \phi^y_n\left( X_n^{\Delta} ; \theta^y_n\right) \right \vert^2 \right]\\
    &\quad + \omega_2\left(1 - C \Delta t_n\right)\mathbb{E}\left[  \left \vert \hat{Z}_n^{\Delta} - \phi^z_n\left( X_n^{\Delta} ; \theta^z_n\right) \right \vert^2 \right]+ \frac{\omega_1}{2}\Delta t_n\mathbb{E}\left[  \left \vert \hat{Z}_n^{\Delta} - \phi^z_n\left( X_n^{\Delta} ; \theta^z_n\right) \right \vert^2 \right]\\
    & \quad + \omega_2 \frac{\Delta t_n}{2} \mathbb{E}\left[  \left \vert \left(\hat{\Gamma}_n^{\Delta} - \phi^{\gamma}_n\left( X_n^{\Delta} ; \theta^{\gamma}_n\right)\right) b(t_n, X_n^{\Delta}) \right \vert^2 \right],\\
    & \geq \omega_1\left(1- C \Delta t_n \right) \mathbb{E}\left[  \left \vert \hat{Y}_n^{\Delta} - \phi^y_n\left( X_n^{\Delta} ; \theta^y_n\right) \right \vert^2 \right] + \omega_2\left(1- C \Delta t_n \right) \mathbb{E}\left[  \left \vert \hat{Z}_n^{\Delta} - \phi^z_n\left( X_n^{\Delta} ; \theta^z_n\right) \right \vert^2 \right] \\
    & \quad + \omega_2 \frac{\Delta t_n}{2} \mathbb{E}\left[  \left \vert \left(\hat{\Gamma}_n^{\Delta} - \phi^{\gamma}_n\left( X_n^{\Delta} ; \theta^{\gamma}_n\right)\right) b(t_n, X_n^{\Delta}) \right \vert^2 \right].
    \end{aligned}
\label{eq50}
\end{equation}

{\it Step 5.} Let us fix $n \in \{0, 1, \ldots, N-1\}$. We assume that $\hat{\theta}_n$ is a perfect approximation of the optimal parameters $\theta_n^{\star} = \left( \theta_n^{y, \star}, \theta_n^{z, \star}, \theta_n^{\gamma, \star} \right) \in \argmin_{\theta_n \in \Theta_n} \mathbf{L}_n^{\Delta}\left(\theta_n\right)$ so that $Y_n^{\Delta, \hat{\theta}} = \phi^y_n\left(\cdot; \theta_n^{y, \star} \right)$, $Z_n^{\Delta, \hat{\theta}} = \phi^z_n\left(\cdot; \theta_n^{z, \star} \right)$ and $\Gamma_n^{\Delta, \hat{\theta}} = \phi^{\gamma}_n\left(\cdot; \theta_n^{\gamma, \star} \right)$. In other words, we assume that the SGD method is not trapped in a local minimum, and we neglect the estimation error resulting from minimizing an empirical loss function. We also have that $\theta_n^{\star} \in \argmin_{\theta_n \in \Theta_n} \hat{\mathbf{L}}_n^{\Delta}\left(\theta_n\right)$ from~\eqref{eq42}. Hence, $\hat{\mathbf{L}}_n^{\Delta}\left(\theta_n^{\star}\right) \leq \hat{\mathbf{L}}_n^{\Delta}\left(\theta_n\right)$ for any $\theta_n$. By using
 the upper bound~\eqref{eq47} and the lower bound~\eqref{eq50} of $\hat{\mathbf{L}}_n^{\Delta}\left(\theta_n\right)$, we then have for all $\theta_n$
\begin{align*}
    &\omega_1 \left(1- C \Delta t_n\right) \mathbb{E}\left[  \left \vert \hat{Y}_n^{\Delta} - Y_n^{\Delta, \hat{\theta}} \right \vert^2 \right] +\omega_2 \left(1-C \Delta t_n \right)\mathbb{E}\left[  \left \vert \hat{Z}_n^{\Delta} - Z_n^{\Delta, \hat{\theta}} \right \vert^2 \right] \\
    &+\omega_2 \frac{\Delta t_n}{2} \mathbb{E}\left[  \left \vert \left(\hat{\Gamma}_n^{\Delta} - \Gamma_n^{\Delta, \hat{\theta}}\right) b(t_n, X_n^{\Delta}) \right \vert^2 \right] \leq \hat{\mathbf{L}}_n^{\Delta}\left( \theta_n^{\star}\right) \leq \hat{\mathbf{L}}_n^{\Delta}\left( \theta_n\right) \leq \\ 
    &\omega_1\left(1+C\Delta t_n \right) \mathbb{E}\left[  \left \vert \hat{Y}_n^{\Delta} - \phi^y_n\left( X_n^{\Delta} ; \theta^y_n\right) \right \vert^2 \right] + \omega_2 C\Delta t_n\mathbb{E}\left[  \left \vert \hat{Y}_n^{\Delta} - \phi^y_n\left( X_n^{\Delta} ; \theta^y_n\right) \right \vert^2 \right]\\
    &+\omega_2\left(1+C\Delta t_n \right)\mathbb{E}\left[  \left \vert \hat{Z}_n^{\Delta} - \phi^z_n\left( X_n^{\Delta} ; \theta^z_n\right) \right \vert^2 \right] + \omega_1C\Delta t_n\mathbb{E}\left[  \left \vert \hat{Z}_n^{\Delta} - \phi^z_n\left( X_n^{\Delta} ; \theta^z_n\right) \right \vert^2 \right]\\
    &+\omega_2 C \Delta t_n \mathbb{E}\left[  \left \vert \left(\hat{\Gamma}_n^{\Delta} - \phi^{\gamma}_n\left( X_n^{\Delta} ; \theta^{\gamma}_n\right)\right) b(t_n, X_n^{\Delta}) \right \vert^2 \right].   
\end{align*}
For $\Delta t_n$ sufficiently small satisfying $C \Delta t_n \leq \frac{1}{2}$ and using~\eqref{eq27}, we have
\begin{equation}    
\begin{aligned}
    & \omega_1 \mathbb{E}\left[  \left \vert \hat{Y}_n^{\Delta} - Y_n^{\Delta, \hat{\theta}} \right \vert^2 \right] + \omega_2\mathbb{E}\left[  \left \vert \hat{Z}_n^{\Delta} - Z_n^{\Delta, \hat{\theta}} \right \vert^2 \right] + \omega_2 \Delta t_n \mathbb{E}\left[  \left \vert \left(\hat{\Gamma}_n^{\Delta} - \Gamma_n^{\Delta, \hat{\theta}}\right) b(t_n, X_n^{\Delta}) \right \vert^2 \right]\\
    & \leq \omega_1 C  \varepsilon_n^{y}+ \omega_2 C \Delta t_n  \varepsilon_n^{y}+ \omega_2 C  \varepsilon_n^{z} + \omega_1 C \Delta t_n  \varepsilon_n^{z} + \omega_2 C \Delta t_n  \varepsilon_n^{\gamma}.
\end{aligned}
\label{eq51}
\end{equation}
After inserting the last inequality into~\eqref{eq41}, we obtain
\begin{equation}
    \begin{aligned}
        & \max_{0 \leq n \leq N}\mathbb{E}\left[\left\vert Y_n - Y_n^{\Delta, \hat{\theta}} \right\vert^2\right] + \max_{0 \leq n \leq N} \mathbb{E}\left[\left\vert Z_n - Z_n^{\Delta, \hat{\theta}} \right\vert^2\right]\\
        & \leq C \mathbb{E}\left[ \left\vert g(X_T) - g\left(X_N^{\Delta}\right) \right\vert^2\right] + C \mathbb{E}\left[ \left\vert \nabla_x g(X_T) - \nabla_x g\left(X_N^{\Delta}\right) \right\vert^2\right] +  C |\Delta| +  C \varepsilon^{DZ}\left( |\Delta| \right)\\
        & \quad 
        + C N \omega_1 \sum_{n=0}^{N-1} \varepsilon_n^{y} + C \omega_2 \sum_{n=0}^{N-1} \varepsilon_n^{y} + C N \omega_2 \sum_{n=0}^{N-1} \varepsilon_n^{z} + C \omega_1 \sum_{n=0}^{N-1} \varepsilon_n^{z} + C \omega_2 \sum_{n=0}^{N-1} \varepsilon_n^{\gamma}.
    \end{aligned}
\label{eq52}
\end{equation}
This completes the proof of the consistency of processes $Y$ and $Z$ in Theorem~\ref{theorem6}.

{\it Step 6.} It now remains to prove the consistency for the process $\Gamma$. Let us fix $n \in \{0, 1, \ldots, N-1\}$. Using~\eqref{eq36} in~\eqref{eq34}, we get
\begin{align*}
   &\mathbb{E}\left[\int_{t_{n}}^{t_{n+1}} \left\vert D_n Z_s - D_n \hat{Z}_n^{\Delta} \right\vert^2 ds\right]\\
   &\leq \mathbb{E}\left[\int_{t_{n}}^{t_{n+1}} \left\vert D_n Z_s - \hat{DZ}_n \right\vert^2 ds\right]+ 2d\left( \mathbb{E}\left[ \left\vert D_n Y_{n+1} - D_n Y_{n+1}^{\Delta, \hat{\theta}} \right\vert^2\right] \right.\\ 
   & \quad \left.- \mathbb{E}\left[\left\vert \mathbb{E}_n\left[   D_n Y_{n+1} - D_n Y_{n+1}^{\Delta, \hat{\theta}} \right] \right\vert^2 \right]  \right)+ 2d \Delta t_n \mathbb{E}\left[\int_{t_{n}}^{t_{n+1}} \left\vert f_D\left(s, \mathbf{X}_s, \mathbf{D}_n \mathbf{X}_s\right) \right\vert^2 ds\right].
\end{align*}
Summing from $n=0,\ldots,N-1$, using~\eqref{eq22} and~\eqref{eq25} gives
\begin{equation}    
\begin{aligned}
   &\mathbb{E}\left[\sum_{n=0}^{N-1} \int_{t_{n}}^{t_{n+1}} \left\vert D_n Z_s - D_n \hat{Z}_n^{\Delta} \right\vert^2 ds\right]\\
   & \leq \varepsilon^{DZ}\left( |\Delta| \right) + C |\Delta| +  2d\mathbb{E}\left[ \left\vert D_{N-1} Y_{N} - D_{N-1} Y_{N}^{\Delta, \hat{\theta}} \right\vert^2\right]\\
   &\quad + 2d\sum_{n=1}^{N-1}\left( \mathbb{E}\left[ \left\vert D_{n-1} Y_{n} - D_{n-1} Y_{n}^{\Delta, \hat{\theta}} \right\vert^2\right] - \mathbb{E}\left[\left\vert \mathbb{E}_n\left[   D_n Y_{n+1} - D_n Y_{n+1}^{\Delta, \hat{\theta}} \right] \right\vert^2 \right]  \right),\\
   &\leq \varepsilon^{DZ}\left( |\Delta| \right) + C |\Delta| +  C\mathbb{E}\left[ \left\vert\nabla_x g\left(X_{N}\right) - \nabla_x g\left(X_{N}^{\Delta}\right) \right\vert^2\right]\\
   &\quad + 2d\sum_{n=1}^{N-1}\left( \mathbb{E}\left[ \left\vert D_{n-1} Y_{n} - D_{n-1} Y_{n}^{\Delta, \hat{\theta}} \right\vert^2\right] - \mathbb{E}\left[\left\vert \mathbb{E}_n\left[    D_n Y_{n+1} - D_n Y_{n+1}^{\Delta, \hat{\theta}} \right] \right\vert^2 \right]  \right),\\
\end{aligned}
\label{eq53}
\end{equation}
where the summation index is changed for the last summation in the first inequality, and~\eqref{eq40} is used in the second inequality. 

Using similar steps as for~\eqref{eq40}, we have that
\begin{align*}
    & \mathbb{E}\left[ \left\vert D_{n-1} Y_{n} - D_{n-1} Y_{n}^{\Delta, \hat{\theta}} \right\vert^2\right] - \mathbb{E}\left[\left\vert \mathbb{E}_n\left[   D_{n} Y_{n+1} - D_{n} Y_{n+1}^{\Delta, \hat{\theta}} \right] \right\vert^2 \right]\\
    & \leq C \mathbb{E}\left[ \left\vert Z_n - Z_n^{\Delta, \hat{\theta}} \right\vert^2\right] + C | \Delta | - \mathbb{E}\left[\left\vert \mathbb{E}_n\left[ D_n Y_{n+1} - D_n Y_{n+1}^{\Delta, \hat{\theta}}\right] \right\vert^2 \right].
\end{align*}
Moreover, using
\begin{equation*}    
    \left(1 - |\Delta| \right) \mathbb{E}\left[\left\vert Z_n - Z_n^{\Delta, \hat{\theta}} \right\vert^2 \right] - \frac{1}{|\Delta|} \mathbb{E}\left[\left\vert Z_n^{\Delta, \hat{\theta}} - \hat{Z}_n^{\Delta}\right\vert^2 \right] \leq \mathbb{E}\left[\left\vert Z_n - \hat{Z}_n^{\Delta} \right\vert^2 \right]
\end{equation*}
and~\eqref{eq37}, we have for $|\Delta|$ small enough:
\begin{align*}
    & \mathbb{E}\left[ \left\vert D_{n-1} Y_{n} - D_{n-1} Y_{n}^{\Delta, \hat{\theta}} \right\vert^2\right] - \mathbb{E}\left[\left\vert \mathbb{E}_n\left[   D_{n} Y_{n+1} - D_{n} Y_{n+1}^{\Delta, \hat{\theta}} \right] \right\vert^2 \right]\\
    &\leq C \mathbb{E}\left[ \left\vert D_n Y_{n+1} - D_n Y_{n+1}^{\Delta, \hat{\theta}} \right\vert^2\right] + C |\Delta| + C |\Delta| \mathbb{E}\left[ \left\vert Y_n - \hat{Y}_n^{\Delta} \right\vert^2 \right] +  C |\Delta| \mathbb{E}\left[ \left\vert Z_n - \hat{Z}_n^{\Delta} \right\vert^2 \right]\\
    & \quad + C\mathbb{E}\left[\int_{t_{n}}^{t_{n+1}} \left\vert D_n Z_s - \hat{DZ}_n \right\vert^2 ds\right] + C |\Delta|\mathbb{E}\left[ \int_{t_{n}}^{t_{n+1}} \left\vert f_D\left(s, \mathbf{X}_s, \mathbf{D}_n \mathbf{X}_s\right) \right\vert^2 ds \right]\\
    & \quad + C N \mathbb{E}\left[\left\vert \hat{Z}_n^{\Delta} - Z_n^{\Delta, \hat{\theta}} \right\vert^2 \right].
\end{align*}
Hence,~\eqref{eq53} becomes 
\begin{align*}
   &\mathbb{E}\left[\sum_{n=0}^{N-1} \int_{t_{n}}^{t_{n+1}} \left\vert D_n Z_s - D_n \hat{Z}_n^{\Delta} \right\vert^2 ds\right]\\
   & \leq C \varepsilon^{DZ}\left( |\Delta| \right) + C |\Delta| +  C\mathbb{E}\left[ \left\vert\nabla_x g\left(X_{N}\right) - \nabla_x g\left(X_{N}^{\Delta}\right) \right\vert^2\right]\\
   & \quad + C \sum_{n=1}^{N-1} \mathbb{E}\left[ \left\vert D_n Y_{n+1} - D_n Y_{n+1}^{\Delta, \hat{\theta}} \right\vert^2\right] + C |\Delta| \sum_{n=1}^{N-1}\mathbb{E}\left[ \left\vert Y_n - \hat{Y}_n^{\Delta} \right\vert^2 \right]\\
   & \quad + C |\Delta| \sum_{n=1}^{N-1}\mathbb{E}\left[ \left\vert Z_n - \hat{Z}_n^{\Delta} \right\vert^2 \right] + C N \sum_{n=1}^{N-1} \mathbb{E}\left[\left\vert \hat{Z}_n^{\Delta} - Z_n^{\Delta, \hat{\theta}} \right\vert^2 \right],
\end{align*}
where we used~\eqref{eq22} and~\eqref{eq25}. From~\eqref{eq38} and~\eqref{eq41}, we have that
\begin{equation}    
\begin{aligned}
   &\mathbb{E}\left[\sum_{n=0}^{N-1} \int_{t_{n}}^{t_{n+1}} \left\vert D_n Z_s - D_n \hat{Z}_n^{\Delta} \right\vert^2 ds\right]\\
   & \leq C \mathbb{E}\left[ \left\vert g(X_T) - g\left(X_N^{\Delta}\right) \right\vert^2\right] + C \mathbb{E}\left[ \left\vert \nabla_x g(X_T) - \nabla_x g\left(X_N^{\Delta}\right) \right\vert^2\right] +  C |\Delta| +  C \varepsilon^{DZ}\left( |\Delta| \right)\\
     & \quad 
     + C N \sum_{n=0}^{N-1} \mathbb{E}\left[\left\vert \hat{Y}_n^{\Delta} - Y_n^{\Delta, \hat{\theta}} \right\vert^2 \right] +  C N \sum_{n=0}^{N-1} \mathbb{E}\left[\left\vert \hat{Z}_n^{\Delta} - Z_n^{\Delta, \hat{\theta}} \right\vert^2 \right].
\end{aligned}
\label{eq54}
\end{equation}
Finally, using
\begin{align*}
    \mathbb{E}\left[ \int_{t_{n}}^{t_{n+1}} \left\vert D_n Z_s - D_n Z_n^{\Delta, \hat{\theta}} \right\vert^2 ds\right]  & \leq 2 \mathbb{E}\left[ \int_{t_{n}}^{t_{n+1}} \left\vert D_n Z_s - D_n \hat{Z}_n^{\Delta} \right\vert^2 ds\right]\\
    & \quad + 2 \Delta t_n \mathbb{E}\left[ \left\vert D_n \hat{Z}_n^{\Delta} - D_n Z_n^{\Delta, \hat{\theta}} \right\vert^2 ds\right], 
\end{align*}
summing over $n=0,\ldots,N-1$ and applying the inequalities~\eqref{eq51} and~\eqref{eq54}, we derive the proof of the consistency of process $\Gamma$ as expressed in~\eqref{eq52}. This concludes the proof of Theorem~\ref{theorem6}.
\end{proof}

According to Theorem~\ref{theorem6}, the total approximation error of the DLBDP scheme consists of six terms. The first two terms correspond to the strong approximation of the terminal condition and its gradient, depending on the Euler-Maruyama scheme and the functions $\left(g(x), \nabla_x g(x)\right)$. The third term represents the strong approximation of the Euler-Maruyama scheme and the path regularity of the processes $\left(Y, Z\right)$, see Theorem~\ref{theorem5}. The fourth term represents the $\mathbb{L}^2$-regularity of $DZ$. All the afformentioned terms converge to zero as $|\Delta|$ goes to zero, with a rate of $|\Delta|$ when Assumptions~\ref{AX4} and~\ref{AY4} are satisfied. For the last two terms, the better the DNNs are able to estimate the functions~\eqref{eq27}, the smaller is their contribution in the total approximation error. Note that from the UAT~\cite{hornik1989multilayer,cybenko1989approximation}, the approximation error from the DNNs can be made arbitrarily small for a sufficiently large number of hidden neurons. It is crucial noting that, in contrast to both the DBDP scheme and the method outlined in~\cite{negyesi2024one}, the DLBDP scheme provides a means to manage the impact of the DNN's approximation error. This is accomplished by selecting the values of $\omega_1$ and $\omega_2$, resulting in improved accuracy for the processes $\left(Y, Z, \Gamma\right)$, as we demonstrate in the next section.

\section{Numerical results}
\label{sec6}
In this section, we illustrate the improved performance of the DLBDP scheme compared to the DBDP scheme not only when approximating the solution, but also its gradient and the Hessian matrix. As high-accurate gradient approximations are of great importance in finance, we consider linear and nonlinear option pricing examples. All the experiments below were run in PYTHON using TensorFlow on the PLEIADES cluster (no parallelization), which consists of 268 workernodes and additionally 5 GPU nodes with 8 NVidia HGX A100 GPUs (128 cores each, 2 TB memory, 16 GB per thread). We run the algorithms on the GPU nodes. For more information, see~PLEIADES documentation\footnote{\url{https://pleiadesbuw.github.io/PleiadesUserDocumentation/}}.

\subsection{Experimental setup}
\label{subsec61}
In all the following examples, we consider the same hyperparameters for our scheme and the DBDP scheme for a fair comparison. For the DNNs, we choose $L = 2$ hidden layers and $\eta= 100 + d$ neurons per hidden layer. The input is normalized based on the true moments. The input is not normalized at discrete time point $t_0,$ as the standard deviation is zero. A hyperbolic tangent activation $\tanh(\cdot)$ is applied on each hidden layer. It is crucial to mention that one can't apply batch normalization for the hidden layers as AD is required to approximate the process $\Gamma$ in the DBDP scheme. This is because using batch normalization creates dependence for the gradients in the batch, since it normalizes across the batch dimension. Using the method tf.GradientTape.batch\_jacobian to approximate $\Gamma$ when the DNN that approximates $Z$ involves tf.keras.layers.BatchNormalization layers  returns something with the expected shape, but its contents have an unclear meaning, see~TensorFlow documentation\footnote{\url{https://www.tensorflow.org/guide/advanced\_autodiff\#batch\_jacobian}}, batch Jacobian section. Therefore, batch normalization is emitted not only in the DBDP scheme, but also in our scheme to ensure a fair comparison. For the SGD iterations, we use the Adam optimizer with a stepwise learning rate decay approach. We choose a batch size of $B=1024$ for each of $\kappa$ optimization steps. At the discrete time point $t_{N-1}$, we consider $\Kf = 24000$ optimization steps, where the learning rate $\alpha$ is adjusted as follows 
\begin{equation*}
  \alpha_{\kappa}=\begin{cases}
    \num{1e-2}, & \text{for $1 \leq \kappa \leq 2000$},\\
    \num{3e-3}, & \text{for $2000 < \kappa \leq 4000$},\\
    \num{1e-3}, & \text{for $4000 < \kappa \leq 8000$},\\
    \num{3e-4}, & \text{for $8000 < \kappa \leq 12000$},\\
    \num{1e-4}, & \text{for $12000 < \kappa \leq 16000$},\\
    \num{3e-5}, & \text{for $16000 < \kappa \leq 20000$},\\
    \num{1e-5}, & \text{for $20000 < \kappa \leq \Kf$}.\\
    \end{cases}
\end{equation*}
For the next discrete time points (i.e., $t_{N-2}, \ldots, t_0$), we make use of the transfer learning approach, and reduce the number of optimization steps to $\Kf = 10000$, and use the following learning rates
\begin{equation*}
  \alpha_{\kappa}=\begin{cases}
    \num{1e-3}, & \text{for $1 \leq \kappa \leq 2000$},\\
    \num{3e-4}, & \text{for $2000 < \kappa \leq 4000$},\\
    \num{1e-4}, & \text{for $4000 < \kappa \leq 6000$},\\
    \num{3e-5}, & \text{for $6000 < \kappa \leq 8000$},\\
    \num{1e-5}, & \text{for $8000 < \kappa \leq \Kf$}.\\
    \end{cases}
\end{equation*}
The gradient of the driver function $f$ w.r.t each variable $(x, y, z)$ and the function $g$ w.r.t to variable $x$ are calculated by using AD, namely tf.GradientTape in TensorFlow. For the gradient of the function representing $Z_t$ (when available) in~\eqref{eq3} w.r.t to variable $x$, tf.GradientTape.batch\_jacobian is used. Note that we consider a uniform time discretization $\Delta$ of $[0, T]$.

We define the following mean squared errors (MSEs) as performance metrics for a sample with the size $B$:
\begin{equation*}
    \tilde{\varepsilon}^{y}_n := \frac{1}{B}\sum_{j=1}^B \left\vert Y_{n, j} - Y_{n, j}^{\Delta, \hat{\theta}} \right\vert^2, \quad \tilde{\varepsilon}^{z}_n := \frac{1}{B}\sum_{j=1}^B \left\vert Z_{n, j} - Z_{n, j}^{\Delta, \hat{\theta}} \right\vert^2, \quad \tilde{\varepsilon}^{\gamma}_n := \frac{1}{B}\sum_{j=1}^B \left\vert \Gamma_{n, j} - \Gamma_{n, j}^{\Delta, \hat{\theta}} \right\vert^2,
\end{equation*}
for each process. To account for the stochasticity of the underlying Brownian motion and the Adam optimizer, we conduct $Q = 10$ independent runs (training's) of the algorithms and define, e.g.,
\begin{equation*}
    \overline{{\tilde{\varepsilon}}}^{y}_n := \frac{1}{Q} \sum_{q=1}^Q \tilde{\epsilon}^{y}_{n,q},
\end{equation*}
as the mean MSE for the process $Y$, and similarly for the other processes. Note that as a relative measure of the MSE, we consider, e.g.,
\begin{equation*}
    \tilde{\varepsilon}^{y, r}_n := \frac{1}{B}\sum_{j=1}^B \frac{\left\vert Y_{n, j} - Y_{n, j}^{\Delta, \hat{\theta}} \right\vert^2}{\left\vert Y_{n, j} \right\vert^2},
\end{equation*}
for the process $Y$, and similarly for the other processes. We choose a testing sample of size $B = 1024.$ The computation time (runtime) for one run of the algorithms is defined as $\tau$, and the average computation time over $Q=10$ runs as $\overline{{\tau}}: = \frac{1}{Q} \sum_{q=1}^Q \tau_q.$

\subsection{The Black-Scholes BSDE}
\label{subsec62}
We start with a linear BSDE - the Black-Scholes BSDE - which is used for pricing of European options. 
\begin{example}
The high-dimenisonal Black-Scholes BSDE reads~\cite{zhang2013sparse}
\begin{equation*}
    \begin{split}
    \left\{
        \begin{array}{rcl}
             dX_t^k &=&  a_k X_t^k\,dt + b_k X_t^k \,dW_t^k, \\
             X_0^k &=& x_0^k, \quad k = 1,\ldots, d,\\  
           -dY_t &=& - \left( R Y_t + \sum_{k=1}^d  \frac{ a_k - R + \delta_k}{b_k} Z_t^k\right)\,dt- Z_t \,dW_t,\\ 
   		   	   Y_T &=& \left(\prod_{k=1}^d \left(X_T^k\right)^{c_k}-K\right)^+,
        \end{array}
    \right. \\ 
    \end{split}
\end{equation*}
\label{ex1}
\end{example}
where $c_k >0$ and $\sum_{k=1}^d c_k = 1$. Note that $a_k$ represents the expected return of the stock $X_t^k$, $b_k$ the volatility of the stock returns, $\delta_k$ is its dividend rate, and $x_0^k$ is the price of the stock at $t =0$. Moreover, $X_T$ is the price of the stocks at time $T$, which denotes the maturity of the option contract. The value $K$ represents the contract's strike price. Finally, $R$ corresponds to the risk-free interest rate. The analytic solution (the option price $Y_t$ and its delta hedging strategy $Z_t$) is given by
\begin{equation}
 \begin{split}
    \left\{
        \begin{array}{rcl}
            Y_t &=& u(t, X_t) = \exp\left(-\check{\delta} \left(T-t\right)\right) \prod_{k=1}^d \left(X_t^k\right)^{c_k} \Phi \left(\check{d}_1\right)-\exp\left(-R\left(T-t\right)\right) K \Phi \left(\check{d}_2\right),\\
           Z_t^k &=& \frac{\partial u}{\partial x_k} b_k X_t^k = c_k \exp\left(-\check{\delta} \left(T-t\right)\right) \prod_{k=1}^d \left(X_t^k\right)^{c_k} \Phi \left(\check{d}_1\right)b_k, \quad k = 1, \ldots, d,\\
           \check{d}_1 &=& \frac{\ln\left(\frac{\prod_{k=1}^d \left(X_t^k\right)^{c_k}}{K}\right) + \left( R-\check{\delta} + \frac{\check{b}^2}{2} \right) \left(T-t\right)}{\check{b} \sqrt{T-t}},\\
            \check{d}_2 &=& \check{d}_1 - \check{b}\sqrt{T-t},\\
            \check{b}^2 &=& \sum_{k=1}^d (b_k c_k)^2,\quad \check{\delta} = \sum_{k=1}^d c_k\left(\delta_k + \frac{b_k^2}{2} \right) - \frac{\check{b}^2}{2},               
        \end{array}
    \right. \\
    \end{split}
    \label{eq55}
\end{equation}
where $\Phi \left(\cdot\right)$ is the standard normal cumulative distribution function. The analytical solution $\Gamma_t = \nabla_x \left( \nabla_x u\left(t, X_t\right) b\left(t, X_t\right) \right)$ is calculated by using AD. As we mentioned in Section~\ref{sec5}, when dealing with a forward SDE represented by the GBM, we apply the ln-transformation to ensure that the theoretical analysis is applicable to our numerical experiments. We define $\check{X}_t :=\ln\left(X_t\right)$ and $\check{u}(t, \check{X}_t) := u(t, X_t)$. Using the Feynman-Kac formula, we write the Black-Scholes BSDE in the ln-domain
\begin{equation}
    \begin{split}
    \left\{
        \begin{array}{rcl}
             d\check{X}_t^k &=&  \left(a_k - \frac{1}{2} b_k^2\right)\,dt + b_k \,dW_t^k, \\
             \check{X}_0^k &=& \ln\left(x_0^k\right), \quad k = 1,\ldots, d,\\  
            -d\check{Y}_t &=& - \left( R \check{Y}_t + \sum_{k=1}^d  \frac{ a_k - R + \delta_k}{b_k} \check{Z}_t^k\right)\,dt- \check{Z}_t \,dW_t,\\ 
   		   	   \check{Y}_T &=& \left( \exp\left(\sum_{k=1}^dc_k\check{X}_T^k\right)-K\right)^+.
        \end{array}
    \right. \\ 
    \end{split}
    \label{eq56}
\end{equation}
The ln-transformation simplifies the Malliavin derivatives as $D_n X_n^k = b_k X_n^k$, $D_n X_{n+1}^k = b_k X_{n+1}^k$ and $D_n \check{X}_{n}^k = D_n \check{X}_{n+1}^k = b_k$ for $k=1,\ldots,d$. Note that $\left( \check{Y}_t, \check{Z}_t \right) = \left( Y_t, Z_t \right)$ since $\check{Y}_t = \check{u}(t, \check{X}_t) = u(t, X_t) = Y_t$ and $\check{Z}_t^k = \frac{\partial \check{u}}{\partial \check{x}_k} b_k = \frac{\partial u}{\partial x_k} b_k X_t^k =  Z_t^k$ for $k = 1, \ldots, d$. Hence, we can compare the approximated solution of~\eqref{eq56} in the ln-domain with the exact solution of Example~\ref{ex1} given in~\eqref{eq55}. In case of the process $\Gamma$, we have that $\check{\Gamma}_t^{k_1, k_2} \frac{1}{X_t^{k_2}} = \Gamma_t^{k_1, k_2}$ for $k_1, k_2= 1, \ldots, d$. In the following tests, for $k=1,\ldots, d$, we set $x_0^k = 100$, $a_k = 0.05$, $b_k = 0.2$, $R = 0.03$, $c_k = \frac{1}{d}$ and $\delta_k = 0$. Moreover, we set $K = 100$, $T = 1$ and $d \in \{1, 10, 50\}$. 

We start with $d=1$ as we can also visualize the exact and approximated values of each process over the discrete time domain. In Figure~\ref{fig1}, we display the exact and approximated value of the processes $\left( Y, Z, \Gamma\right)$ from the first run of DBDP and DLBDP schemes at arbitrary discrete time points $\left(t_2, t_{32}, t_{63} \right) = \left(0.0312, 0.5000, 0.9844 \right)$ using $N = 64$. Moreover, we show only $256$ out of $1024$ testing samples for better visualization. 
\begin{figure}[tbhp]
	\centering
	\subfloat[Process $Y$ at $t_{63} = 0.9844$.]{
        \pgfplotstableread{"Figures/Example1/d1/Y_t_0.9844.dat"}{\table}
        \begin{tikzpicture} 
            \begin{axis}[
                xmin = 60, xmax = 200,
                ymin = -1, ymax = 100,
                xtick distance = 40,
                ytick distance = 20,
                width = 0.33\textwidth,
                height = 0.35\textwidth,
                xlabel = {$X_{63}^{\Delta}$},
                xticklabel style={/pgf/number format/1000 sep=},
                legend cell align = {left},
                legend pos = north west,
                legend style={nodes={scale=0.6, transform shape}}]
                \addplot[only marks, red, mark = o, mark options={scale=0.5}] table [x = 0, y =1] {\table}; 
                \addplot[only marks, blue, mark = square, mark options={scale=0.5}] table [x =0, y = 2] {\table};    
                \addplot[only marks, green, mark = triangle, mark options={scale=0.5}] table [x =0, y = 3] {\table};
                \legend{
                    $Y_{63}$, 
                    $Y_{63}^{\Delta, \hat{\theta}}$-DBDP,
                    $Y_{63}^{\Delta, \hat{\theta}}$-DLBDP
                } 
            \end{axis}
            \end{tikzpicture}
		\label{fig1a}
	}
	\subfloat[Process $Z$ at $t_{63} = 0.9844$.]{
        \pgfplotstableread{"Figures/Example1/d1/Z_t_0.9844.dat"}{\table}
        \begin{tikzpicture} 
            \begin{axis}[
                xmin = 60, xmax = 200,
                ymin = -1, ymax = 42,
                xtick distance = 40,
                ytick distance = 10,
                width = 0.33\textwidth,
                height = 0.35\textwidth,
                xlabel = {$X_{63}^{\Delta}$},
                xticklabel style={/pgf/number format/1000 sep=},
                legend cell align = {left},
                legend pos = south east,
                legend style={nodes={scale=0.6, transform shape}}]
                \addplot[only marks, red, mark = o, mark options={scale=0.5}] table [x = 0, y =1] {\table}; 
                \addplot[only marks, blue, mark = square, mark options={scale=0.5}] table [x =0, y = 2] {\table};                
                \addplot[only marks, green, mark = triangle, mark options={scale=0.5}] table [x =0, y = 3] {\table};                
                \legend{
                    $Z_{63}$, 
                    $Z_{63}^{\Delta, \hat{\theta}}$-DBDP,
                    $Z_{63}^{\Delta, \hat{\theta}}$-DLBDP
                } 
            \end{axis}
            \end{tikzpicture}
		\label{fig1b}
	}
	\subfloat[Process $\Gamma$ at $t_{63} = 0.9844$.]{
        \pgfplotstableread{"Figures/Example1/d1/Gamma_t_0.9844.dat"}{\table}
        \begin{tikzpicture} 
            \begin{axis}[
                xmin = 60, xmax = 200,
                ymin = -0.1, ymax = 3.5,
                xtick distance = 40,
                ytick distance = 1,
                width = 0.33\textwidth,
                height = 0.35\textwidth,
                xlabel = {$X_{63}^{\Delta}$},
                xticklabel style={/pgf/number format/1000 sep=},
                legend cell align = {left},
                legend pos = north east,
                legend style={nodes={scale=0.6, transform shape}}]
                \addplot[only marks, red, mark = o, mark options={scale=0.5}] table [x = 0, y =1] {\table}; 
                \addplot[only marks, blue, mark = square, mark options={scale=0.5}] table [x =0, y = 2] {\table};                
                \addplot[only marks, green, mark = triangle, mark options={scale=0.5}] table [x =0, y = 3] {\table};                
                \legend{
                    $\Gamma_{63}$, 
                    $\Gamma_{63}^{\Delta, \hat{\theta}}$-DBDP,
                    $\Gamma_{63}^{\Delta, \hat{\theta}}$-DLBDP
                } 
            \end{axis}
            \end{tikzpicture}
		\label{fig1c}
	}
        \hfill
	\subfloat[Process $Y$ at $t_{32} = 0.5000$.]{
        \pgfplotstableread{"Figures/Example1/d1/Y_t_0.5000.dat"}{\table}
        \begin{tikzpicture} 
            \begin{axis}[
                xmin = 68, xmax = 160,
                ymin = -1, ymax = 60,
                xtick distance = 20,
                ytick distance = 10,
                width = 0.33\textwidth,
                height = 0.35\textwidth,
                xlabel = {$X_{32}^{\Delta}$},
                xticklabel style={/pgf/number format/1000 sep=},
                legend cell align = {left},
                legend pos = north west,
                legend style={nodes={scale=0.6, transform shape}}]
                \addplot[only marks, red, mark = o, mark options={scale=0.5}] table [x = 0, y =1] {\table}; 
                \addplot[only marks, blue, mark = square, mark options={scale=0.5}] table [x =0, y = 2] {\table};    
                \addplot[only marks, green, mark = triangle, mark options={scale=0.5}] table [x =0, y = 3] {\table};
                \legend{
                    $Y_{32}$, 
                    $Y_{32}^{\Delta, \hat{\theta}}$-DBDP,
                    $Y_{32}^{\Delta, \hat{\theta}}$-DLBDP
                } 
            \end{axis}
            \end{tikzpicture}
		\label{fig1d}
	}
	\subfloat[Process $Z$ at $t_{32} = 0.5000$.]{
        \pgfplotstableread{"Figures/Example1/d1/Z_t_0.5000.dat"}{\table}
        \begin{tikzpicture} 
            \begin{axis}[
                xmin = 68, xmax = 160,
                ymin = -1, ymax = 35,
                xtick distance = 40,
                ytick distance = 10,
                width = 0.33\textwidth,
                height = 0.35\textwidth,
                xlabel = {$X_{32}^{\Delta}$},
                xticklabel style={/pgf/number format/1000 sep=},
                legend cell align = {left},
                legend pos = south east,
                legend style={nodes={scale=0.6, transform shape}}]
                \addplot[only marks, red, mark = o, mark options={scale=0.5}] table [x = 0, y =1] {\table}; 
                \addplot[only marks, blue, mark = square, mark options={scale=0.5}] table [x =0, y = 2] {\table};                
                \addplot[only marks, green, mark = triangle, mark options={scale=0.5}] table [x =0, y = 3] {\table};                
                \legend{
                    $Z_{32}$, 
                    $Z_{32}^{\Delta, \hat{\theta}}$-DBDP,
                    $Z_{32}^{\Delta, \hat{\theta}}$-DLBDP
                } 
            \end{axis}
            \end{tikzpicture}
		\label{fig1e}
	}
	\subfloat[Process $\Gamma$ at $t_{32} = 0.5000$.]{
        \pgfplotstableread{"Figures/Example1/d1/Gamma_t_0.5000.dat"}{\table}
        \begin{tikzpicture} 
            \begin{axis}[
                xmin = 68, xmax = 160,
                ymin = 0, ymax = 0.78,
                xtick distance = 40,
                ytick distance = 0.2,
                width = 0.33\textwidth,
                height = 0.35\textwidth,
                xlabel = {$X_{32}^{\Delta}$},
                xticklabel style={/pgf/number format/1000 sep=},
                legend cell align = {left},
                legend pos = north east,
                legend style={nodes={scale=0.6, transform shape}}]
                \addplot[only marks, red, mark = o, mark options={scale=0.5}] table [x = 0, y =1] {\table}; 
                \addplot[only marks, blue, mark = square, mark options={scale=0.5}] table [x =0, y = 2] {\table};                
                \addplot[only marks, green, mark = triangle, mark options={scale=0.5}] table [x =0, y = 3] {\table};                
                \legend{
                    $\Gamma_{32}$, 
                    $\Gamma_{32}^{\Delta, \hat{\theta}}$-DBDP,
                    $\Gamma_{32}^{\Delta, \hat{\theta}}$-DLBDP
                } 
            \end{axis}
            \end{tikzpicture}
		\label{fig1f}
	}
        \hfill
	\subfloat[Process $Y$ at $t_{2} = 0.0312$.]{
        \pgfplotstableread{"Figures/Example1/d1/Y_t_0.0312.dat"}{\table}
        \begin{tikzpicture} 
            \begin{axis}[
                xmin = 93.5, xmax = 110,
                ymin = 6, ymax = 17,
                xtick distance = 5,
                ytick distance = 2,
                width = 0.33\textwidth,
                height = 0.35\textwidth,
                xlabel = {$X_{2}^{\Delta}$},
                xticklabel style={/pgf/number format/1000 sep=},
                legend cell align = {left},
                legend pos = north west,
                legend style={nodes={scale=0.6, transform shape}}]
                \addplot[only marks, red, mark = o, mark options={scale=0.5}] table [x = 0, y =1] {\table}; 
                \addplot[only marks, blue, mark = square, mark options={scale=0.5}] table [x =0, y = 2] {\table};    
                \addplot[only marks, green, mark = triangle, mark options={scale=0.5}] table [x =0, y = 3] {\table};
                \legend{
                    $Y_{2}$, 
                    $Y_{2}^{\Delta, \hat{\theta}}$-DBDP,
                    $Y_{2}^{\Delta, \hat{\theta}}$-DLBDP
                } 
            \end{axis}
            \end{tikzpicture}
		\label{fig1g}
	}
	\subfloat[Process $Z$ at $t_{2} = 0.0312$.]{
        \pgfplotstableread{"Figures/Example1/d1/Z_t_0.0312.dat"}{\table}
        \begin{tikzpicture} 
            \begin{axis}[
                xmin = 93.5, xmax = 110,
                ymin = 9, ymax = 17,
                xtick distance = 5,
                ytick distance = 2,
                width = 0.33\textwidth,
                height = 0.35\textwidth,
                xlabel = {$X_{2}^{\Delta}$},
                xticklabel style={/pgf/number format/1000 sep=},
                legend cell align = {left},
                legend pos = south east,
                legend style={nodes={scale=0.6, transform shape}}]
                \addplot[only marks, red, mark = o, mark options={scale=0.5}] table [x = 0, y =1] {\table}; 
                \addplot[only marks, blue, mark = square, mark options={scale=0.5}] table [x =0, y = 2] {\table};                
                \addplot[only marks, green, mark = triangle, mark options={scale=0.5}] table [x =0, y = 3] {\table};                
                \legend{
                    $Z_{2}$, 
                    $Z_{2}^{\Delta, \hat{\theta}}$-DBDP,
                    $Z_{2}^{\Delta, \hat{\theta}}$-DLBDP
                } 
            \end{axis}
            \end{tikzpicture}
		\label{fig1h}
	}
	\subfloat[Process $\Gamma$ at $t_{2} = 0.0312$.]{
        \pgfplotstableread{"Figures/Example1/d1/Gamma_t_0.0312.dat"}{\table}
        \begin{tikzpicture} 
            \begin{axis}[
                xmin = 93.5, xmax = 110,
                ymin = 0.36, ymax = 0.56,
                xtick distance = 5,
                ytick distance = 0.05,
                width = 0.33\textwidth,
                height = 0.35\textwidth,
                xlabel = {$X_{2}^{\Delta}$},
                xticklabel style={/pgf/number format/1000 sep=},
                legend cell align = {left},
                legend pos = south west,
                legend style={nodes={scale=0.6, transform shape}}]
                \addplot[only marks, red, mark = o, mark options={scale=0.5}] table [x = 0, y =1] {\table}; 
                \addplot[only marks, blue, mark = square, mark options={scale=0.5}] table [x =0, y = 2] {\table};                
                \addplot[only marks, green, mark = triangle, mark options={scale=0.5}] table [x =0, y = 3] {\table};                
                \legend{
                    $\Gamma_{2}$, 
                    $\Gamma_{2}^{\Delta, \hat{\theta}}$-DBDP,
                    $\Gamma_{2}^{\Delta, \hat{\theta}}$-DLBDP
                } 
            \end{axis}
            \end{tikzpicture}
		\label{fig1i}
	}
    \caption{Exact and approximated values of the processes $\left(Y, Z, \Gamma \right)$ from the first run of DBDP and DLBDP schemes at discrete time points $\left(t_2, t_{32}, t_{63} \right) = \left(0.0312, 0.5000, 0.9844 \right)$ using $256$ samples of the testing sample in Example~\ref{ex1}, for $d=1$ and $N = 64$.}
    \label{fig1}
\end{figure}
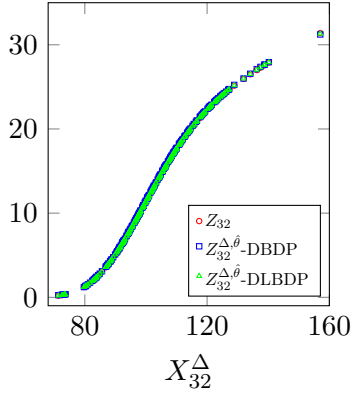
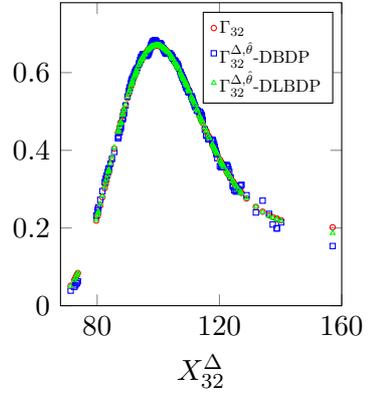
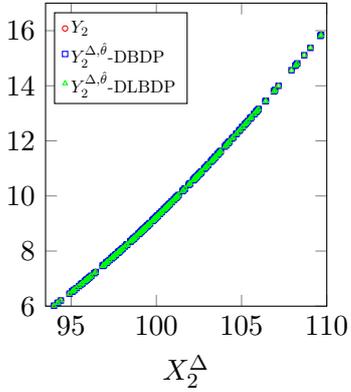
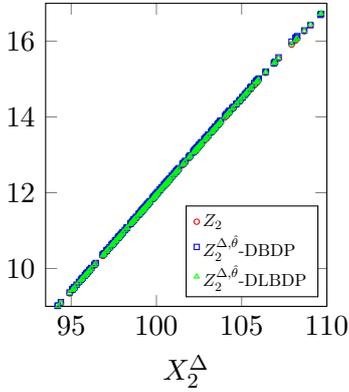
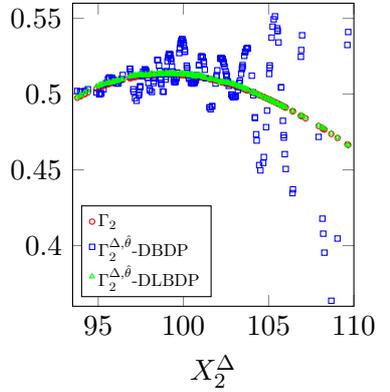
Our scheme outperforms the DBDP scheme in approximating the process $\Gamma$, particularly as we approach $t_0$. It is difficult to observe any improvement from our scheme for the processes $Y$ and $Z$ in Figure~\ref{fig1}. Therefore, to provide a clearer comparison using the entire testing sample across the discrete domain $\Delta$, we visualize in Figure~\ref{fig2} the mean MSE values for each process for $d \in \{1, 10, 50\}$. The STD of the MSE values is given in the shaded area.
\begin{figure}[tbhp]
	\centering
	\subfloat[Process $Y$, $d=1$.]{
        \pgfplotstableread{"Figures/Example1/d1/vareps_y_over_t.dat"}{\table}        
        \begin{tikzpicture} 
            \begin{axis}[
                xmin = 0, xmax = 1,
                ymin = 5e-5, ymax = 4e-3,
                xtick distance = 0.2,
                ymode=log, 
                grid = both,
                width = 0.33\textwidth,
                height = 0.35\textwidth,
                xlabel = {$t_n$},
                legend cell align = {left},
                legend pos = north west,
                legend style={nodes={scale=0.5, transform shape}}]
                \addplot[smooth, ultra thick, blue, dashed] table [x = 0, y =1] {\table}; 
                \addplot[smooth, ultra thick, green, solid] table [x = 0, y =4] {\table}; 
                \addplot [name path=upper,draw=none] table[x=0,y=2] {\table};
                \addplot [name path=lower,draw=none] table[x=0,y=3] {\table};
                \addplot [fill=blue!40] fill between[of=upper and lower];           
                \addplot [name path=upper,draw=none] table[x=0,y=5] {\table};
                \addplot [name path=lower,draw=none] table[x=0,y=6] {\table};
                \addplot [fill=green!40] fill between[of=upper and lower];                
                \legend{
                   $\overline{{\tilde{\varepsilon}}}^{y}_n$-DBDP,
                   $\overline{{\tilde{\varepsilon}}}^{y}_n$-DLBDP
                }
            \end{axis}
            \end{tikzpicture}
		\label{fig2a}
	} 
	\subfloat[Process $Z$, $d=1$.]{ \pgfplotstableread{"Figures/Example1/d1/vareps_z_over_t.dat"}{\table}
        \begin{tikzpicture} 
            \begin{axis}[
                xmin = 0, xmax = 1,
                ymin = 1e-4, ymax = 8e-2,
                xtick distance = 0.2,
                ymode=log,
                grid = both,
                width = 0.33\textwidth,
                height = 0.35\textwidth,
                xlabel = {$t_n$},
                legend cell align = {left},
                legend pos = north west,
                legend style={nodes={scale=0.5, transform shape}}]
                \addplot[smooth, ultra thick, blue, dashed] table [x = 0, y =1] {\table}; 
                \addplot[smooth, ultra thick, green, solid] table [x = 0, y =4] {\table}; 
                \addplot [name path=upper,draw=none] table[x=0,y=2] {\table};
                \addplot [name path=lower,draw=none] table[x=0,y=3] {\table};
                \addplot [fill=blue!40] fill between[of=upper and lower];           
                \addplot [name path=upper,draw=none] table[x=0,y=5] {\table};
                \addplot [name path=lower,draw=none] table[x=0,y=6] {\table};
                \addplot [fill=green!40] fill between[of=upper and lower];                
                \legend{
                   $\overline{{\tilde{\varepsilon}}}^{z}_n$-DBDP,
                   $\overline{{\tilde{\varepsilon}}}^{z}_n$-DLBDP
                }
            \end{axis}
            \end{tikzpicture}
		\label{fig2b}
	} 
        \subfloat[Process $\Gamma$, $d=1$.]{ \pgfplotstableread{"Figures/Example1/d1/vareps_gamma_over_t.dat"}{\table}
        \begin{tikzpicture} 
            \begin{axis}[
                xmin = 0, xmax = 1,
                ymin = 2e-7, ymax = 1,
                xtick distance = 0.2,
                ymode=log,
                grid = both,
                width = 0.33\textwidth,
                height = 0.35\textwidth,
                xlabel = {$t_n$},
                legend cell align = {left},
                legend pos = north east,
                legend style={nodes={scale=0.5, transform shape}}]
                \addplot[smooth, ultra thick, blue, dashed] table [x = 0, y =1] {\table}; 
                \addplot[smooth, ultra thick, green, solid] table [x = 0, y =4] {\table}; 
                \addplot [name path=upper,draw=none] table[x=0,y=2] {\table};
                \addplot [name path=lower,draw=none] table[x=0,y=3] {\table};
                \addplot [fill=blue!40] fill between[of=upper and lower];           
                \addplot [name path=upper,draw=none] table[x=0,y=5] {\table};
                \addplot [name path=lower,draw=none] table[x=0,y=6] {\table};
                \addplot [fill=green!40] fill between[of=upper and lower];                
                \legend{
                   $\overline{{\tilde{\varepsilon}}}^{\gamma}_n$-DBDP,
                   $\overline{{\tilde{\varepsilon}}}^{\gamma}_n$-DLBDP
                }
            \end{axis}
            \end{tikzpicture}
		\label{fig2c}
	} 
        \hfill
        \subfloat[Process $Y$, $d=10$.]{
        \pgfplotstableread{"Figures/Example1/d10/vareps_y_over_t.dat"}{\table}        
        \begin{tikzpicture} 
            \begin{axis}[
                xmin = 0, xmax = 1,
                ymin = 1e-5, ymax = 1e-4,
                xtick distance = 0.2,
                ymode=log,
                grid = both,
                width = 0.33\textwidth,
                height = 0.35\textwidth,
                xlabel = {$t_n$},
                legend cell align = {left},
                legend pos = north east,
                legend style={nodes={scale=0.5, transform shape}}]
                \addplot[smooth, ultra thick, blue, dashed] table [x = 0, y =1] {\table}; 
                \addplot[smooth, ultra thick, green, solid] table [x = 0, y =4] {\table}; 
                \addplot [name path=upper,draw=none] table[x=0,y=2] {\table};
                \addplot [name path=lower,draw=none] table[x=0,y=3] {\table};
                \addplot [fill=blue!40] fill between[of=upper and lower];           
                \addplot [name path=upper,draw=none] table[x=0,y=5] {\table};
                \addplot [name path=lower,draw=none] table[x=0,y=6] {\table};
                \addplot [fill=green!40] fill between[of=upper and lower];                
                \legend{
                   $\overline{{\tilde{\varepsilon}}}^{y}_n$-DBDP,
                   $\overline{{\tilde{\varepsilon}}}^{y}_n$-DLBDP
                }
            \end{axis}
            \end{tikzpicture}
		\label{fig2d}
	} 
	\subfloat[Process $Z$, $d=10$.]{ \pgfplotstableread{"Figures/Example1/d10/vareps_z_over_t.dat"}{\table}
        \begin{tikzpicture} 
            \begin{axis}[
                xmin = 0, xmax = 1,
                ymin = 3e-5, ymax = 2e-3,
                xtick distance = 0.2,
                ymode=log,
                grid = both,
                width = 0.33\textwidth,
                height = 0.35\textwidth,
                xlabel = {$t_n$},
                legend cell align = {left},
                legend pos = north west,
                legend style={nodes={scale=0.5, transform shape}}]
                \addplot[smooth, ultra thick, blue, dashed] table [x = 0, y =1] {\table}; 
                \addplot[smooth, ultra thick, green, solid] table [x = 0, y =4] {\table}; 
                \addplot [name path=upper,draw=none] table[x=0,y=2] {\table};
                \addplot [name path=lower,draw=none] table[x=0,y=3] {\table};
                \addplot [fill=blue!40] fill between[of=upper and lower];           
                \addplot [name path=upper,draw=none] table[x=0,y=5] {\table};
                \addplot [name path=lower,draw=none] table[x=0,y=6] {\table};
                \addplot [fill=green!40] fill between[of=upper and lower];                
                \legend{
                   $\overline{{\tilde{\varepsilon}}}^{z}_n$-DBDP,
                   $\overline{{\tilde{\varepsilon}}}^{z}_n$-DLBDP
                }
            \end{axis}
            \end{tikzpicture}
		\label{fig2e}
	} 
        \subfloat[Process $\Gamma$, $d=10$.]{ \pgfplotstableread{"Figures/Example1/d10/vareps_gamma_over_t.dat"}{\table}
        \begin{tikzpicture} 
            \begin{axis}[
                xmin = 0, xmax = 1,
                ymin = 5e-8, ymax = 5e-2,
                xtick distance = 0.2,
                ymode=log,
                grid = both,
                width = 0.33\textwidth,
                height = 0.35\textwidth,
                xlabel = {$t_n$},
                legend cell align = {left},
                legend pos = north east,
                legend style={nodes={scale=0.5, transform shape}}]
                \addplot[smooth, ultra thick, blue, dashed] table [x = 0, y =1] {\table}; 
                \addplot[smooth, ultra thick, green, solid] table [x = 0, y =4] {\table}; 
                \addplot [name path=upper,draw=none] table[x=0,y=2] {\table};
                \addplot [name path=lower,draw=none] table[x=0,y=3] {\table};
                \addplot [fill=blue!40] fill between[of=upper and lower];           
                \addplot [name path=upper,draw=none] table[x=0,y=5] {\table};
                \addplot [name path=lower,draw=none] table[x=0,y=6] {\table};
                \addplot [fill=green!40] fill between[of=upper and lower];                
                \legend{
                   $\overline{{\tilde{\varepsilon}}}^{\gamma}_n$-DBDP,
                   $\overline{{\tilde{\varepsilon}}}^{\gamma}_n$-DLBDP
                }
            \end{axis}
            \end{tikzpicture}
		\label{fig2f}
	} 
        \hfill
        \subfloat[Process $Y$, $d=50$.]{
        \pgfplotstableread{"Figures/Example1/d50/vareps_y_over_t.dat"}{\table}        
        \begin{tikzpicture} 
            \begin{axis}[
                xmin = 0, xmax = 1,
                ymin = 4e-6, ymax = 1e-4,
                xtick distance = 0.2,
                ymode=log,
                grid = both,
                width = 0.33\textwidth,
                height = 0.35\textwidth,
                xlabel = {$t_n$},
                legend cell align = {left},
                legend pos = north east,
                legend style={nodes={scale=0.5, transform shape}}]
                \addplot[smooth, ultra thick, blue, dashed] table [x = 0, y =1] {\table}; 
                \addplot[smooth, ultra thick, green, solid] table [x = 0, y =4] {\table}; 
                \addplot [name path=upper,draw=none] table[x=0,y=2] {\table};
                \addplot [name path=lower,draw=none] table[x=0,y=3] {\table};
                \addplot [fill=blue!40] fill between[of=upper and lower];           
                \addplot [name path=upper,draw=none] table[x=0,y=5] {\table};
                \addplot [name path=lower,draw=none] table[x=0,y=6] {\table};
                \addplot [fill=green!40] fill between[of=upper and lower];                
                \legend{
                   $\overline{{\tilde{\varepsilon}}}^{y}_n$-DBDP,
                   $\overline{{\tilde{\varepsilon}}}^{y}_n$-DLBDP
                }
            \end{axis}
            \end{tikzpicture}
		\label{fig2g}
	} 
	\subfloat[Process $Z$, $d=50$.]{ \pgfplotstableread{"Figures/Example1/d50/vareps_z_over_t.dat"}{\table}
        \begin{tikzpicture} 
            \begin{axis}[
                xmin = 0, xmax = 1,
                ymin = 4e-5, ymax = 1e-3,
                xtick distance = 0.2,
                ymode=log,
                grid = both,
                width = 0.33\textwidth,
                height = 0.35\textwidth,
                xlabel = {$t_n$},
                legend cell align = {left},
                legend pos = north west,
                legend style={nodes={scale=0.5, transform shape}}]
                \addplot[smooth, ultra thick, blue, dashed] table [x = 0, y =1] {\table}; 
                \addplot[smooth, ultra thick, green, solid] table [x = 0, y =4] {\table}; 
                \addplot [name path=upper,draw=none] table[x=0,y=2] {\table};
                \addplot [name path=lower,draw=none] table[x=0,y=3] {\table};
                \addplot [fill=blue!40] fill between[of=upper and lower];           
                \addplot [name path=upper,draw=none] table[x=0,y=5] {\table};
                \addplot [name path=lower,draw=none] table[x=0,y=6] {\table};
                \addplot [fill=green!40] fill between[of=upper and lower];                
                \legend{
                   $\overline{{\tilde{\varepsilon}}}^{z}_n$-DBDP,
                   $\overline{{\tilde{\varepsilon}}}^{z}_n$-DLBDP
                }
            \end{axis}
            \end{tikzpicture}
		\label{fig2h}
	} 
        \subfloat[Process $\Gamma$, $d=50$.]{ \pgfplotstableread{"Figures/Example1/d50/vareps_gamma_over_t.dat"}{\table}
        \begin{tikzpicture} 
            \begin{axis}[
                xmin = 0, xmax = 1,
                ymin = 9e-8, ymax = 1e-2,
                xtick distance = 0.2,
                ymode=log,
                grid = both,
                width = 0.33\textwidth,
                height = 0.35\textwidth,
                xlabel = {$t_n$},
                legend cell align = {left},
                legend pos = north east,
                legend style={nodes={scale=0.5, transform shape}}]
                \addplot[smooth, ultra thick, blue, dashed] table [x = 0, y =1] {\table}; 
                \addplot[smooth, ultra thick, green, solid] table [x = 0, y =4] {\table}; 
                \addplot [name path=upper,draw=none] table[x=0,y=2] {\table};
                \addplot [name path=lower,draw=none] table[x=0,y=3] {\table};
                \addplot [fill=blue!40] fill between[of=upper and lower];           
                \addplot [name path=upper,draw=none] table[x=0,y=5] {\table};
                \addplot [name path=lower,draw=none] table[x=0,y=6] {\table};
                \addplot [fill=green!40] fill between[of=upper and lower];                
                \legend{
                   $\overline{{\tilde{\varepsilon}}}^{\gamma}_n$-DBDP,
                   $\overline{{\tilde{\varepsilon}}}^{\gamma}_n$-DLBDP
                }
            \end{axis}
            \end{tikzpicture}
		\label{fig2i}
	} 
    \caption{Mean MSE values of the processes $\left(Y, Z, \Gamma \right)$ from DBDP and DLBDP schemes over the discrete time points $\{t_n\}_{n=0}^{N-1}$ using the testing sample in Example~\ref{ex1}, for $d \in \{1, 10, 50\}$ and $N = 64$. The STD of MSE values is given in the shaded area.}
\label{fig2}
\end{figure}
For the case of $d=1$, Figure~\ref{fig2c} 
 clearly shows a substantial improvement in approximating the process $\Gamma$ across the discrete time points $\{t_n\}_{n=0}^{N-1}$ achieved by our scheme compared to the DBDP scheme. Furthermore, Figure~\ref{fig2b} demonstrates that the DLBDP scheme also outperforms in approximating the process $Z$. However, there is no improvement achieved with our scheme for the process $Y$, as shown in Figure~\ref{fig2a}. As the dimension increases to $d=10$ and $d=50$, our scheme further exhibits a higher accuracy for approximating the processes $\left(Z, \Gamma \right)$. Moreover, an improvement in approximating the process $Y$ is evident for $d=
50$ from the DLBDP scheme compared to DBDP scheme, as displayed in Figure~\ref{fig2g}.

Next, we report in Table~\ref{tab1} the mean relative MSE of each process at $t_0$ while varying $N$ for $d \in \{1, 10, 50 \}$, along with the average computation time from both the DBDP and DLBDP schemes. The STD of the relative MSE values at $t_0$ is given in the brackets.
\begin{table}[tbhp]
    \begin{subtable}[h]{\textwidth}		
    \centering
      \resizebox{\textwidth}{!}{\begin{tabular}{| c | c | c | c | c |}
      \hline
        \multirow{3}{*}{Metric} & N = 2 & N = 8 & N = 32 & N = 64\\ \
        & DBDP & DBDP & DBDP & DBDP\\ 
        & DLBDP & DLBDP & DLBDP & DLBDP\\ \hline 
        \multirow{2}{*}{$\overline{{\tilde{\varepsilon}}}^{y, r}_0$} & $\num{1.10e-05}$ $(\num{1.33e-05})$ & $\num{3.88e-06}$ $(\num{1.89e-06})$ & $\num{3.13e-06}$ $(\num{4.34e-06})$ & $\num{1.05e-06}$ $(\num{1.43e-06})$ \\
        & $\num{1.04e-05}$ $(\num{1.11e-05})$ & $\num{3.40e-06}$ $(\num{3.89e-06})$ & $\num{3.26e-06}$ $(\num{4.43e-06})$ & $\num{1.02e-06}$ $(\num{1.60e-06})$ \\ \hline    
        \multirow{2}{*}{$\overline{{\tilde{\varepsilon}}}^{z, r}_0$} & $\num{3.22e-03}$ $(\num{3.64e-04})$ & $\num{2.04e-04}$ $(\num{3.16e-05})$ & $\num{1.87e-05}$ $(\num{8.02e-06})$ & $\num{4.77e-06}$ $(\num{6.59e-06})$ \\
        & $\num{9.55e-04}$ $(\num{1.29e-04})$ & $\num{7.42e-05}$ $(\num{1.20e-05})$ & $\num{5.76e-06}$ $(\num{1.75e-06})$ & $\num{2.05e-06}$ $(\num{8.60e-07})$ \\ \hline    
        \multirow{2}{*}{$\overline{{\tilde{\varepsilon}}}^{\gamma, r}_0$} & $\num{1.15e+00}$ $(\num{2.18e-02})$ & $\num{9.92e-01}$ $(\num{2.44e-03})$ & $\num{9.89e-01}$ $(\num{3.57e-03})$ & $\num{9.93e-01}$ $(\num{4.45e-03})$ \\
        & $\mathbf{\num{8.10e-04}}$ $(\mathbf{\num{6.60e-05}})$ & $\mathbf{\num{7.35e-05}}$ $(\mathbf{\num{2.26e-05}})$ & $\mathbf{\num{4.93e-06}}$ $(\mathbf{\num{4.82e-06}})$ & $\mathbf{\num{2.39e-06}}$ $(\mathbf{\num{3.41e-06}})$ \\ \hline    
        \multirow{2}{*}{$\overline{\tau}$} & $\num{9.49e+02}$ & $\num{2.74e+03}$ & $\num{1.05e+04}$ & $\num{2.24e+04}$ \\
         & $\num{7.44e+02}$ & $\num{2.11e+03}$ & $\num{8.33e+03}$ & $\num{1.80e+04}$ \\ \hline   
       \end{tabular}}
       \caption{$d=1.$}
        \label{tab1a} 
    \end{subtable}
     \\
    \begin{subtable}[h]{\textwidth}
      \resizebox{\textwidth}{!}{\begin{tabular}{| c | c | c | c | c |}
      \hline
        \multirow{3}{*}{Metric} & N = 2 & N = 8 & N = 32 & N = 64\\ \
        & DBDP & DBDP & DBDP & DBDP\\ 
        & DLBDP & DLBDP & DLBDP & DLBDP\\ \hline
        \multirow{2}{*}{$\overline{{\tilde{\varepsilon}}}^{y, r}_0$} & $\num{4.11e-04}$ $(\num{1.04e-04})$ & $\num{2.04e-05}$ $(\num{1.27e-05})$ & $\num{4.77e-06}$ $(\num{5.61e-06})$ & $\num{2.50e-06}$ $(\num{4.50e-06})$ \\
        & $\num{4.22e-05}$ $(\num{3.30e-05})$ & $\num{1.08e-05}$ $(\num{1.11e-05})$ & $\num{4.42e-06}$ $(\num{4.17e-06})$ & $\num{3.92e-06}$ $(\num{7.07e-06})$ \\ \hline    
        \multirow{2}{*}{$\overline{{\tilde{\varepsilon}}}^{z, r}_0$} & $\num{1.77e-02}$ $(\num{5.89e-04})$ & $\num{1.10e-03}$ $(\num{1.46e-04})$ & $\num{7.75e-05}$ $(\num{1.98e-05})$ & $\num{2.53e-05}$ $(\num{1.97e-05})$ \\
        & $\num{5.65e-03}$ $(\num{1.81e-04})$ & $\num{4.15e-04}$ $(\num{3.86e-05})$ & $\num{2.35e-05}$ $(\num{9.87e-06})$ & $\num{8.61e-06}$ $(\num{5.70e-06})$ \\ \hline    
        \multirow{2}{*}{$\overline{{\tilde{\varepsilon}}}^{\gamma, r}_0$} & $\num{1.00e+00}$ $(\num{3.23e-03})$ & $\num{1.00e+00}$ $(\num{4.33e-04})$ & $\num{1.00e+00}$ $(\num{1.19e-03})$ & $\num{1.00e+00}$ $(\num{2.62e-03})$ \\
        & $\mathbf{\num{6.89e-04}}$ $(\mathbf{\num{6.98e-05}})$ & $\mathbf{\num{8.66e-06}}$ $(\mathbf{\num{3.51e-06}})$ & $\mathbf{\num{7.33e-06}}$ $(\mathbf{\num{3.97e-06}})$ & $\mathbf{\num{7.28e-06}}$ $(\mathbf{\num{5.97e-06}})$ \\ \hline    
        \multirow{2}{*}{$\overline{\tau}$} & $\num{1.02e+03}$ & $\num{3.22e+03}$ & $\num{1.74e+04}$ & $\num{5.10e+04}$ \\
         & $\num{8.11e+02}$ & $\num{2.68e+03}$ & $\num{1.54e+04}$ & $\num{4.61e+04}$ \\ \hline   
       \end{tabular}}
    \caption{$d=10$.}
    \label{tab1b} 
    \end{subtable}
     \\
    \begin{subtable}[h]{\textwidth}
      \resizebox{\textwidth}{!}{\begin{tabular}{| c | c | c | c | c |}
      \hline
        \multirow{3}{*}{Metric} & N = 2 & N = 8 & N = 32 & N = 64\\ \
        & DBDP & DBDP & DBDP & DBDP\\ 
        & DLBDP & DLBDP & DLBDP & DLBDP\\ \hline
        \multirow{2}{*}{$\overline{{\tilde{\varepsilon}}}^{y, r}_0$} & $\num{5.40e-03}$ $(\num{3.12e-04})$ & $\num{4.30e-04}$ $(\num{9.66e-05})$ & $\num{4.77e-05}$ $(\num{3.65e-05})$ & $\num{1.68e-05}$ $(\num{1.35e-05})$ \\
        & $\num{2.01e-05}$ $(\num{1.35e-05})$ & $\num{7.54e-06}$ $(\num{7.04e-06})$ & $\num{5.58e-06}$ $(\num{5.87e-06})$ & $\num{2.32e-06}$ $(\num{2.21e-06})$ \\ \hline    
        \multirow{2}{*}{$\overline{{\tilde{\varepsilon}}}^{z, r}_0$} & $\num{5.74e-02}$ $(\num{1.41e-03})$ & $\num{4.17e-03}$ $(\num{3.41e-04})$ & $\num{2.69e-04}$ $(\num{6.50e-05})$ & $\num{8.41e-05}$ $(\num{2.57e-05})$ \\
        & $\num{2.28e-02}$ $(\num{3.59e-04})$ & $\num{1.50e-03}$ $(\num{5.84e-05})$ & $\num{1.01e-04}$ $(\num{2.35e-05})$ & $\num{2.37e-05}$ $(\num{9.35e-06})$ \\ \hline    
        \multirow{2}{*}{$\overline{{\tilde{\varepsilon}}}^{\gamma, r}_0$} & $\num{1.00e+00}$ $(\num{5.22e-05})$ & $\num{1.00e+00}$ $(\num{2.07e-04})$ & $\num{1.00e+00}$ $(\num{1.35e-04})$ & $\num{1.00e+00}$ $(\num{2.12e-04})$ \\
        & $\mathbf{\num{6.17e-02}}$ $(\mathbf{\num{1.85e-03}})$ & $\mathbf{\num{1.34e-03}}$ $(\mathbf{\num{2.33e-04}})$ & $\mathbf{\num{7.06e-05}}$ $(\mathbf{\num{6.49e-05}})$ & $\mathbf{\num{5.24e-05}}$ $(\mathbf{\num{5.97e-05}})$ \\ \hline    
        \multirow{2}{*}{$\overline{\tau}$} & $\num{1.22e+03}$ & $\num{5.46e+03}$ & $\num{5.15e+04}$ & $\num{1.82e+05}$ \\
         & $\num{1.03e+03}$ & $\num{4.88e+03}$ & $\num{4.99e+04}$ & $\num{1.78e+05}$ \\ \hline   
       \end{tabular}}
    \caption{$d=50$.}
    \label{tab1c} 
    \end{subtable}
    \caption{Mean relative MSE values of $\left(Y_0, Z_0, \Gamma_0 \right)$ from DBDP and DLBDP schemes and their average runtimes in Example~\ref{ex1} for $d \in \{1, 10, 50\}$ and $N \in \{2, 8, 32, 64\}$. The STD of the relative MSE values at $t_0$ is given in the brackets.}
    \label{tab1} 
\end{table}
The mean relative MSE of $\left(Y_0, Z_0\right)$ decreases as $N$ increases for each dimension in both schemes. This trend is also observed for $\Gamma_0$ in our scheme, but not in the DBDP scheme, which actually diverges. Note that the mean relative MSE values start to flatten out for $N =64$, indicating that the overall contribution of the approximation error from the DNNs increases for higher $N$ and becomes larger than the discretization error. This is consistent with the error analysis in Section~\ref{sec5} (see~\cite{hure2020deep}, Theorem 4.1 for the DBDP scheme). Moreover, our scheme consistently yields the smallest mean relative MSE for each process, especially as the dimension increases. The average computation time of the DLBDP algorithm is shorter compared to that of the DBDP algorithm. This is because the AD used to approximate the process $\Gamma$ in the DBDP scheme incurs higher computational costs than its approximation via a DNN in the DLBDP scheme. Note that we compare the computational time of both schemes including the computation of $\Gamma$ at each optimization step. In~\cite{negyesi2024one} it is mentioned that the runtime of their algorithm is roughly double of the DBDP one, as it requires solving two optimization problems per discrete time step. Hence, one can reasonably infer that our algorithm may be at least twice as fast as the one proposed in~\cite{negyesi2024one}.

To train the algorithms, we set a high number of optimization steps (and a high number of hidden neurons) as described in Section~\ref{subsec61} such that the same hyperparameters are used for each example. However, the computation time of the algorithms can be reduced, e.g., by reducing the  number of optimization steps. This can be seen in Figure~\ref{fig3}, where we display the mean loss and MSE values of each process for both the algorithms using a validation sample $B=1024$, at discrete time points $\left(t_{32}, t_{63} \right)$ in case of $d=50$ and using $N =64$. The mean loss is defined as $\overline{\mathbf{L}}_n^{\Delta}\left( \hat{\theta}_n \right) := \frac{1}{Q} \sum_{q=1}^{Q} \mathbf{L}_{n, q}^{\Delta}\left( \hat{\theta}_n \right)$. The STD of the loss and MSE values is given in the shaded area.
\begin{figure}[tbhp]
	\centering
	\subfloat[Loss, $t_{63} = 0.9844$.]{
        \pgfplotstableread{"Figures/Example1/d50/L_valid_t_0.9844.dat"}{\table}        
        \begin{tikzpicture} 
            \begin{axis}[
                xmin = 0, xmax = 24000,
                ymin = 5e-4, ymax = 1e-1,
                xtick distance = 4000,
                ymode=log, 
                grid = both,
                width = 0.48\textwidth,
                height = 0.3\textwidth,
                xlabel = {$\Kf$},
                legend cell align = {left},
                legend pos = north east,
                legend style={nodes={scale=0.5, transform shape}}]
                \addplot[smooth, ultra thick, blue, dashed] table [x = 0, y =1] {\table}; 
                \addplot[smooth, ultra thick, green, solid] table [x = 0, y =4] {\table}; 
                \addplot [name path=upper,draw=none] table[x=0,y=2] {\table};
                \addplot [name path=lower,draw=none] table[x=0,y=3] {\table};
                \addplot [fill=blue!40] fill between[of=upper and lower];           
                \addplot [name path=upper,draw=none] table[x=0,y=5] {\table};
                \addplot [name path=lower,draw=none] table[x=0,y=6] {\table};
                \addplot [fill=green!40] fill between[of=upper and lower];                
                \legend{
                   $\overline{\mathbf{L}}_n^{\Delta}\left( \hat{\theta}_n \right)$-DBDP,
                   $\overline{\mathbf{L}}_n^{\Delta}\left( \hat{\theta}_n \right)$-DLBDP
                }
            \end{axis}
            \end{tikzpicture}
		\label{fig3a}
	} 
	\subfloat[Loss, $t_{32}=0.5000$.]{ \pgfplotstableread{"Figures/Example1/d50/L_valid_t_0.5000.dat"}{\table}
        \begin{tikzpicture} 
            \begin{axis}[
                xmin = 0, xmax = 10000,
                ymin = 3e-6, ymax = 1e-2,
                xtick distance = 2000,
                ymode=log,
                grid = both,
                width = 0.48\textwidth,
                height = 0.3\textwidth,
                xlabel = {$\Kf$},
                legend cell align = {left},
                legend pos = north east,
                legend style={nodes={scale=0.5, transform shape}}]
                \addplot[smooth, ultra thick, blue, dashed] table [x = 0, y =1] {\table}; 
                \addplot[smooth, ultra thick, green, solid] table [x = 0, y =4] {\table}; 
                \addplot [name path=upper,draw=none] table[x=0,y=2] {\table};
                \addplot [name path=lower,draw=none] table[x=0,y=3] {\table};
                \addplot [fill=blue!40] fill between[of=upper and lower];           
                \addplot [name path=upper,draw=none] table[x=0,y=5] {\table};
                \addplot [name path=lower,draw=none] table[x=0,y=6] {\table};
                \addplot [fill=green!40] fill between[of=upper and lower];                
                \legend{
                   $\overline{\mathbf{L}}_n^{\Delta}\left( \hat{\theta}_n \right)$-DBDP,
                   $\overline{\mathbf{L}}_n^{\Delta}\left( \hat{\theta}_n \right)$-DLBDP
                }
            \end{axis}
            \end{tikzpicture}
		\label{fig3b}
	} 
    \hfill
    \subfloat[Process $Y$, $t_{63}=0.9844$.]{
        \pgfplotstableread{"Figures/Example1/d50/moments_vareps_y_valid_t_0.9844.dat"}{\table}        
        \begin{tikzpicture} 
            \begin{axis}[
                xmin = 0, xmax = 24000,
                ymin = 8e-6, ymax = 1e-1,
                xtick distance = 4000,
                ymode=log, 
                grid = both,
                width = 0.48\textwidth,
                height = 0.3\textwidth,
                xlabel = {$\Kf$},
                legend cell align = {left},
                legend pos = north east,
                legend style={nodes={scale=0.5, transform shape}}]
                \addplot[smooth, ultra thick, blue, dashed] table [x = 0, y =1] {\table}; 
                \addplot[smooth, ultra thick, green, solid] table [x = 0, y =4] {\table}; 
                \addplot [name path=upper,draw=none] table[x=0,y=2] {\table};
                \addplot [name path=lower,draw=none] table[x=0,y=3] {\table};
                \addplot [fill=blue!40] fill between[of=upper and lower];           
                \addplot [name path=upper,draw=none] table[x=0,y=5] {\table};
                \addplot [name path=lower,draw=none] table[x=0,y=6] {\table};
                \addplot [fill=green!40] fill between[of=upper and lower];                
                \legend{
                   $\overline{{\tilde{\varepsilon}}}^{y}_n$-DBDP,
                   $\overline{{\tilde{\varepsilon}}}^{y}_n$-DLBDP
                }
            \end{axis}
            \end{tikzpicture}
		\label{fig3c}
	} 
	\subfloat[Process $Y$, $t_{32}=0.5000$.]{ \pgfplotstableread{"Figures/Example1/d50/moments_vareps_y_valid_t_0.5000.dat"}{\table}
        \begin{tikzpicture} 
            \begin{axis}[
                xmin = 0, xmax = 10000,
                ymin = 4e-6, ymax = 1e-3,
                xtick distance = 2000,
                ymode=log,
                grid = both,
                width = 0.48\textwidth,
                height = 0.3\textwidth,
                xlabel = {$\Kf$},
                legend cell align = {left},
                legend pos = north east,
                legend style={nodes={scale=0.5, transform shape}}]
                \addplot[smooth, ultra thick, blue, dashed] table [x = 0, y =1] {\table}; 
                \addplot[smooth, ultra thick, green, solid] table [x = 0, y =4] {\table}; 
                \addplot [name path=upper,draw=none] table[x=0,y=2] {\table};
                \addplot [name path=lower,draw=none] table[x=0,y=3] {\table};
                \addplot [fill=blue!40] fill between[of=upper and lower];           
                \addplot [name path=upper,draw=none] table[x=0,y=5] {\table};
                \addplot [name path=lower,draw=none] table[x=0,y=6] {\table};
                \addplot [fill=green!40] fill between[of=upper and lower];                
                \legend{
                   $\overline{{\tilde{\varepsilon}}}^{y}_n$-DBDP,
                   $\overline{{\tilde{\varepsilon}}}^{y}_n$-DLBDP
                }
            \end{axis}
            \end{tikzpicture}
		\label{fig3d}
	} 
    \hfill
    \subfloat[Process $Z$, $t_{63}=0.9844$.]{
        \pgfplotstableread{"Figures/Example1/d50/moments_vareps_z_valid_t_0.9844.dat"}{\table}        
        \begin{tikzpicture} 
            \begin{axis}[
                xmin = 0, xmax = 24000,
                ymin = 1e-4, ymax = 1e+0,
                xtick distance = 4000,
                ymode=log, 
                grid = both,
                width = 0.48\textwidth,
                height = 0.3\textwidth,
                xlabel = {$\Kf$},
                legend cell align = {left},
                legend pos = north east,
                legend style={nodes={scale=0.5, transform shape}}]
                \addplot[smooth, ultra thick, blue, dashed] table [x = 0, y =1] {\table}; 
                \addplot[smooth, ultra thick, green, solid] table [x = 0, y =4] {\table}; 
                \addplot [name path=upper,draw=none] table[x=0,y=2] {\table};
                \addplot [name path=lower,draw=none] table[x=0,y=3] {\table};
                \addplot [fill=blue!40] fill between[of=upper and lower];           
                \addplot [name path=upper,draw=none] table[x=0,y=5] {\table};
                \addplot [name path=lower,draw=none] table[x=0,y=6] {\table};
                \addplot [fill=green!40] fill between[of=upper and lower];                
                \legend{
                   $\overline{{\tilde{\varepsilon}}}^{z}_n$-DBDP,
                   $\overline{{\tilde{\varepsilon}}}^{z}_n$-DLBDP
                }
            \end{axis}
            \end{tikzpicture}
		\label{fig3e}
	} 
	\subfloat[Process $Z$, $t_{32}=0.5000$.]{ \pgfplotstableread{"Figures/Example1/d50/moments_vareps_z_valid_t_0.5000.dat"}{\table}
        \begin{tikzpicture} 
            \begin{axis}[
                xmin = 0, xmax = 10000,
                ymin = 2e-5, ymax = 5e-3,
                xtick distance = 2000,
                ymode=log,
                grid = both,
                width = 0.48\textwidth,
                height = 0.3\textwidth,
                xlabel = {$\Kf$},
                legend cell align = {left},
                legend pos = north east,
                legend style={nodes={scale=0.5, transform shape}}]
                \addplot[smooth, ultra thick, blue, dashed] table [x = 0, y =1] {\table}; 
                \addplot[smooth, ultra thick, green, solid] table [x = 0, y =4] {\table}; 
                \addplot [name path=upper,draw=none] table[x=0,y=2] {\table};
                \addplot [name path=lower,draw=none] table[x=0,y=3] {\table};
                \addplot [fill=blue!40] fill between[of=upper and lower];           
                \addplot [name path=upper,draw=none] table[x=0,y=5] {\table};
                \addplot [name path=lower,draw=none] table[x=0,y=6] {\table};
                \addplot [fill=green!40] fill between[of=upper and lower];                
                \legend{
                   $\overline{{\tilde{\varepsilon}}}^{z}_n$-DBDP,
                   $\overline{{\tilde{\varepsilon}}}^{z}_n$-DLBDP
                }
            \end{axis}
            \end{tikzpicture}
		\label{fig3f}
	} 
    \hfill
    \subfloat[Process $\Gamma$, $t_{63}=0.9844$.]{
        \pgfplotstableread{"Figures/Example1/d50/moments_vareps_gamma_valid_t_0.9844.dat"}{\table}        
        \begin{tikzpicture} 
            \begin{axis}[
                xmin = 0, xmax = 24000,
                ymin = 4e-5, ymax = 5e-2,
                xtick distance = 4000,
                ymode=log, 
                grid = both,
                width = 0.48\textwidth,
                height = 0.3\textwidth,
                xlabel = {$\Kf$},
                legend cell align = {left},
                legend pos = north east,
                legend style={nodes={scale=0.5, transform shape}}]
                \addplot[smooth, ultra thick, blue, dashed] table [x = 0, y =1] {\table}; 
                \addplot[smooth, ultra thick, green, solid] table [x = 0, y =4] {\table}; 
                \addplot [name path=upper,draw=none] table[x=0,y=2] {\table};
                \addplot [name path=lower,draw=none] table[x=0,y=3] {\table};
                \addplot [fill=blue!40] fill between[of=upper and lower];           
                \addplot [name path=upper,draw=none] table[x=0,y=5] {\table};
                \addplot [name path=lower,draw=none] table[x=0,y=6] {\table};
                \addplot [fill=green!40] fill between[of=upper and lower];                
                \legend{
                   $\overline{{\tilde{\varepsilon}}}^{\gamma}_n$-DBDP,
                   $\overline{{\tilde{\varepsilon}}}^{\gamma}_n$-DLBDP
                }
            \end{axis}
            \end{tikzpicture}
		\label{fig3g}
	} 
	\subfloat[Process $\Gamma$, $t_{32}=0.5000$.]{ \pgfplotstableread{"Figures/Example1/d50/moments_vareps_gamma_valid_t_0.5000.dat"}{\table}
        \begin{tikzpicture} 
            \begin{axis}[
                xmin = 0, xmax = 10000,
                ymin = 2e-7, ymax = 1e-4,
                xtick distance = 2000,
                ymode=log,
                grid = both,
                width = 0.48\textwidth,
                height = 0.3\textwidth,
                xlabel = {$\Kf$},
                legend cell align = {left},
                legend pos = north east,
                legend style={nodes={scale=0.5, transform shape}}]
                \addplot[smooth, ultra thick, blue, dashed] table [x = 0, y =1] {\table}; 
                \addplot[smooth, ultra thick, green, solid] table [x = 0, y =4] {\table}; 
                \addplot [name path=upper,draw=none] table[x=0,y=2] {\table};
                \addplot [name path=lower,draw=none] table[x=0,y=3] {\table};
                \addplot [fill=blue!40] fill between[of=upper and lower];           
                \addplot [name path=upper,draw=none] table[x=0,y=5] {\table};
                \addplot [name path=lower,draw=none] table[x=0,y=6] {\table};
                \addplot [fill=green!40] fill between[of=upper and lower];                
                \legend{
                   $\overline{{\tilde{\varepsilon}}}^{\gamma}_n$-DBDP,
                   $\overline{{\tilde{\varepsilon}}}^{\gamma}_n$-DLBDP
                }
            \end{axis}
            \end{tikzpicture}
		\label{fig3h}
	} 
    \caption{Mean loss and MSE values of the process $\left(Y, Z, \Gamma \right)$ from DBDP and DLBDP schemes at discrete time points $\left(t_{32}, t_{63} \right) = \left(0.5000, 0.9844\right)$ using the validation sample in Example~\ref{ex1}, for $d=50$ and $N=64$. The STD of the loss and MSE values is given in the shaded area.}
\label{fig3}
\end{figure}
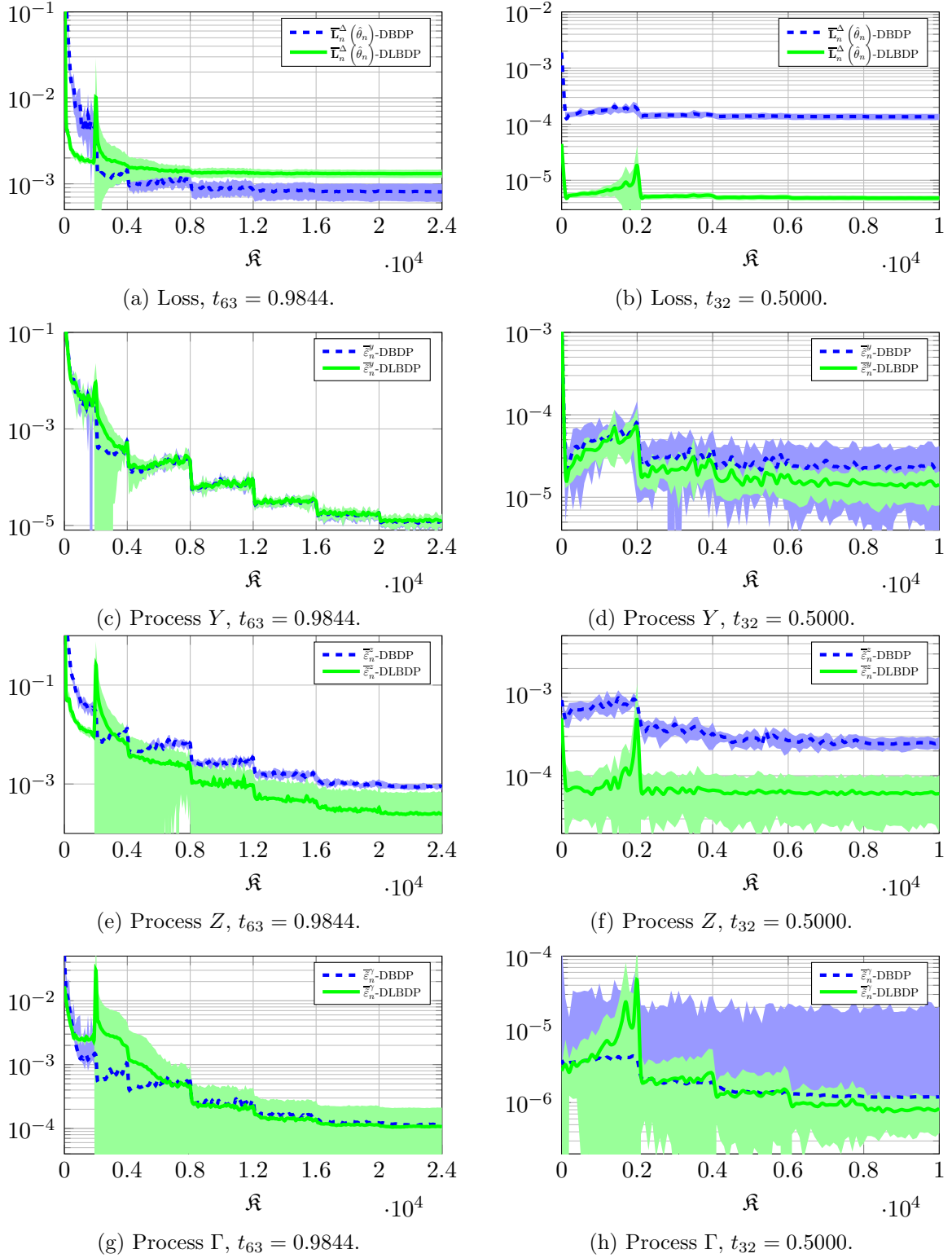
By choosing for instance $\Kf = 16000$ at $t_{63}$ and $\Kf = 5000$ at other discrete time points, the runtime of the algorithms is substantially reduced with almost an insignificant loss of accuracy.

\subsection{Option pricing with different interest rates }
\label{subsec63}
We now consider a pricing problem involving a European option in a financial market where the different interest rates for borrowing and lending are different. This model, originally introduced in~\cite{bergman1995option}, and has been addressed in e.g.,~\cite{weinan2017deep,weinan2019multilevel,teng2021review, teng2022gradient} is represented by a nonlinear BSDE.
\begin{example}
The high-dimensional nonlinear BSDE for pricing European options with different interest rates reads
\begin{equation*}
    \begin{split}
        \left\{
            \begin{array}{rcl}
                dX_t & = & a X_t\,dt + b X_t\, dW_t,\\
                X_0 & = & x_0,\\
                -dY_t & = & \left(-R_1Y_t - \frac{a - R_1}{b} \sum_{k=1}^{d}Z_t^k+ \left( R_2 - R_1\right) \max \left( \frac{1}{b} \sum_{k=1}^{d}Z_t^k - Y_t , 0 \right)\right)\,dt\\
                & & -Z_t \,dW_t,\\  
   	   	    	Y_T & = & g(X_T),
            \end{array}
        \right.
    \end{split}
\end{equation*}
\label{ex2}
\end{example}
where $R_1$ and $R_2$ are the interest rates for lending and borrowing, respectively, and $g(x)$ is the payoff function. Note that instead of solving the above BSDE directly, we solve the transformed BSDE in the ln-domain.

In the case of $d=1$, we consider an European call option with $g(X_T)=\left(X_T-K\right)^+$. This setting agrees with the setting in~\cite{gobet2005regression} (Section 6.3.1), where it is noted that solving the above nonlinear BSDE is the same as solving the linear BSDE in Example~\ref{ex1} with $R = R_2$. This is a good example to compare the approximation of the process $\left(Y, Z, \Gamma \right)$ in case of a nonlinear BSDE from both algorithms with the exact solution (given in~\eqref{eq55}) on the entire discrete time domain. We set $T=0.5$, $K = 100$, $x_0 = 100$, $a = 0.06$, $b = 0.2$, $R_1 = 0.04$ and $R_2 = 0.06$. In Figure~\ref{fig4}, we display the mean MSE values for each process over discrete domain $\Delta$ using the testing sample and $N=64$, where the STD of the MSE values is visualized in the shaded area.
\begin{figure}[tbhp]
	\centering
	\subfloat[Process $Y$.]{
        \pgfplotstableread{"Figures/Example2/d1/vareps_y_over_t.dat"}{\table}        
        \begin{tikzpicture} 
            \begin{axis}[
                xmin = 0, xmax = 0.5,
                ymin = 1e-5, ymax = 1e-3,
                xtick distance = 0.1,
                ymode=log, 
                grid = both,
                width = 0.33\textwidth,
                height = 0.35\textwidth,
                xlabel = {$t_n$},
                legend cell align = {left},
                legend pos = north west,
                legend style={nodes={scale=0.5, transform shape}}]
                \addplot[smooth, ultra thick, blue, dashed] table [x = 0, y =1] {\table}; 
                \addplot[smooth, ultra thick, green, solid] table [x = 0, y =4] {\table}; 
                \addplot [name path=upper,draw=none] table[x=0,y=2] {\table};
                \addplot [name path=lower,draw=none] table[x=0,y=3] {\table};
                \addplot [fill=blue!40] fill between[of=upper and lower];           
                \addplot [name path=upper,draw=none] table[x=0,y=5] {\table};
                \addplot [name path=lower,draw=none] table[x=0,y=6] {\table};
                \addplot [fill=green!40] fill between[of=upper and lower];                
                \legend{
                   $\overline{{\tilde{\varepsilon}}}^{y}_n$-DBDP,
                   $\overline{{\tilde{\varepsilon}}}^{y}_n$-DLBDP
                }
            \end{axis}
            \end{tikzpicture}
		\label{fig4a}
	} 
	\subfloat[Process $Z$.]{ \pgfplotstableread{"Figures/Example2/d1/vareps_z_over_t.dat"}{\table}
        \begin{tikzpicture} 
            \begin{axis}[
                xmin = 0, xmax = 0.5,
                ymin = 3e-5, ymax = 1e-2,
                xtick distance = 0.1,
                ymode=log,
                grid = both,
                width = 0.33\textwidth,
                height = 0.35\textwidth,
                xlabel = {$t_n$},
                legend cell align = {left},
                legend pos = north west,
                legend style={nodes={scale=0.5, transform shape}}]
                \addplot[smooth, ultra thick, blue, dashed] table [x = 0, y =1] {\table}; 
                \addplot[smooth, ultra thick, green, solid] table [x = 0, y =4] {\table}; 
                \addplot [name path=upper,draw=none] table[x=0,y=2] {\table};
                \addplot [name path=lower,draw=none] table[x=0,y=3] {\table};
                \addplot [fill=blue!40] fill between[of=upper and lower];           
                \addplot [name path=upper,draw=none] table[x=0,y=5] {\table};
                \addplot [name path=lower,draw=none] table[x=0,y=6] {\table};
                \addplot [fill=green!40] fill between[of=upper and lower];                
                \legend{
                   $\overline{{\tilde{\varepsilon}}}^{z}_n$-DBDP,
                   $\overline{{\tilde{\varepsilon}}}^{z}_n$-DLBDP
                }
            \end{axis}
            \end{tikzpicture}
		\label{fig4b}
	} 
        \subfloat[Process $\Gamma$.]{ \pgfplotstableread{"Figures/Example2/d1/vareps_gamma_over_t.dat"}{\table}
        \begin{tikzpicture} 
            \begin{axis}[
                xmin = 0, xmax = 0.5,
                ymin = 2e-7, ymax = 1,
                xtick distance = 0.1,
                ymode=log,
                grid = both,
                width = 0.33\textwidth,
                height = 0.35\textwidth,
                xlabel = {$t_n$},
                legend cell align = {left},
                legend pos = north east,
                legend style={nodes={scale=0.5, transform shape}}]
                \addplot[smooth, ultra thick, blue, dashed] table [x = 0, y =1] {\table}; 
                \addplot[smooth, ultra thick, green, solid] table [x = 0, y =4] {\table}; 
                \addplot [name path=upper,draw=none] table[x=0,y=2] {\table};
                \addplot [name path=lower,draw=none] table[x=0,y=3] {\table};
                \addplot [fill=blue!40] fill between[of=upper and lower];           
                \addplot [name path=upper,draw=none] table[x=0,y=5] {\table};
                \addplot [name path=lower,draw=none] table[x=0,y=6] {\table};
                \addplot [fill=green!40] fill between[of=upper and lower];                
                \legend{
                   $\overline{{\tilde{\varepsilon}}}^{\gamma}_n$-DBDP,
                   $\overline{{\tilde{\varepsilon}}}^{\gamma}_n$-DLBDP
                }
            \end{axis}
            \end{tikzpicture}
		\label{fig4c}
	} 
    \caption{Mean MSE values of the processes $\left(Y, Z, \Gamma \right)$ from DBDP and DLBDP schemes over the discrete time points $\{t_n\}_{n=0}^{N-1}$ using the testing sample in Example~\ref{ex2}, for $d=1$ and $N = 64$. The STD of MSE values is given in the shaded area.}
\label{fig4}
\end{figure}
We see that our scheme outperforms the DBDP scheme in approximating the processes $\left(Z, \Gamma \right)$ on the entire discrete domain $\Delta$, similarly as in Example~\ref{ex1}. In Table~\ref{tab2}, we report the mean relative MSE values at $t_0$ for each process and the algorithm average runtime using $N \in \{ 2, 8, 32, 64\}$.  The STD of the relative MSE values at $t_0$ is given in the brackets.
\begin{table}[htb!]
{\footnotesize
\begin{center}
  \begin{tabular}{| c | c | c | c | c |}
  \hline
    \multirow{3}{*}{Metric} & N = 2 & N = 8 & N = 32 & N = 64\\ \
    & DBDP & DBDP & DBDP & DBDP\\ 
    & DLBDP & DLBDP & DLBDP & DLBDP\\ \hline
        \multirow{2}{*}{$\overline{{\tilde{\varepsilon}}}^{y, r}_0$} & $\num{6.23e-06}$ $(\num{6.59e-06})$ & $\num{2.62e-06}$ $(\num{2.65e-06})$ & $\num{2.93e-06}$ $(\num{3.37e-06})$ & $\num{8.93e-07}$ $(\num{8.21e-07})$ \\
        & $\num{6.04e-06}$ $(\num{6.47e-06})$ & $\num{2.67e-06}$ $(\num{2.68e-06})$ & $\num{3.07e-06}$ $(\num{3.62e-06})$ & $\num{1.00e-06}$ $(\num{1.29e-06})$ \\ \hline    
        \multirow{2}{*}{$\overline{{\tilde{\varepsilon}}}^{z, r}_0$} & $\num{2.01e-04}$ $(\num{7.06e-05})$ & $\num{1.53e-05}$ $(\num{8.48e-06})$ & $\num{2.95e-06}$ $(\num{3.21e-06})$ & $\num{1.72e-06}$ $(\num{2.02e-06})$ \\
        & $\num{2.83e-05}$ $(\num{1.78e-05})$ & $\num{1.42e-06}$ $(\num{8.28e-07})$ & $\num{6.81e-07}$ $(\num{7.68e-07})$ & $\num{2.97e-07}$ $(\num{3.05e-07})$ \\ \hline    
        \multirow{2}{*}{$\overline{{\tilde{\varepsilon}}}^{\gamma, r}_0$} & $\num{1.09e+00}$ $(\num{1.63e-02})$ & $\num{9.96e-01}$ $(\num{7.92e-04})$ & $\num{9.92e-01}$ $(\num{7.82e-03})$ & $\num{9.96e-01}$ $(\num{2.26e-03})$ \\
        & $\mathbf{\num{3.22e-04}}$ $(\mathbf{\num{3.61e-05}})$ & $\mathbf{\num{2.56e-05}}$ $(\mathbf{\num{1.01e-05}})$ & $\mathbf{\num{1.26e-06}}$ $(\mathbf{\num{1.27e-06}})$ & $\mathbf{\num{1.18e-06}}$ $(\mathbf{\num{1.86e-06}})$ \\ \hline    
        \multirow{2}{*}{$\overline{\tau}$} & $\num{8.08e+02}$ & $\num{2.33e+03}$ & $\num{9.09e+03}$ & $\num{1.96e+04}$ \\
         & $\num{5.81e+02}$ & $\num{1.72e+03}$ & $\num{6.91e+03}$ & $\num{1.53e+04}$ \\ \hline   
   \end{tabular}
  \end{center}
\caption{Mean relative MSE values of $\left(Y_0, Z_0, \Gamma_0 \right)$ from DBDP and DLBDP schemes and their average runtimes in Example~\ref{ex2} for $d=1$ and $N \in \{2, 8, 32, 64\}$. The STD of the relative MSE values at $t_0$ is given in the brackets.}
\label{tab2} 
}
\end{table}
The same conclusion can be drawn that the DLBDP scheme yields convergent results for $N \in \{2, 8, 32, 64\}$ for each process (whereas DBDP diverges for $\Gamma_0$) and outperforms the DBDP scheme. Additionally, our scheme exhibits smaller runtimes, similarly to Example~\ref{ex1}.

Next, we test both schemes in the case of $d=50$, using the payoff function 
$$Y_T = \max \left(\max_{k = 1, \ldots, d} (X_{T}^k - K_1, 0\right)-2\max \left(\max_{k = 1, \ldots, d} (X_{T}^k - K_2, 0\right),$$
where $K_1=120$ and $K_2 = 150$. The benchmark value is $Y_0 \doteq 17.9743$, which is computed using the multilevel Monte Carlo approach~\cite{weinan2019multilevel} with 7 Picard iterations and $Q=10$ independent runs. For $N \in \{2, 8, 32, 64\}$, we show in Table~\ref{tab3} the approximation for $Y_0$ (the reference results for $Z_0$ are not available) from both algorithms and their average runtime. More precisely, we report the mean approximation of $Y_0$ defined as $\overline{Y}_0^{\Delta, \hat{\theta}} := \frac{1}{Q} \sum_{q=1}^Q Y_{0,q}^{\Delta, \hat{\theta}}$, the mean relative MSE and their STD given in the brackets.
\begin{table}[htb!]
{\footnotesize
\begin{center}
  \begin{tabular}{| c | c | c | c | c |}
  \hline
    \multirow{3}{*}{Metric} & N = 2 & N = 8 & N = 32 & N = 64\\ \
    & DBDP & DBDP & DBDP & DBDP\\ 
    & DLBDP & DLBDP & DLBDP & DLBDP\\ \hline
        $Y_0$~\cite{weinan2019multilevel} & \multicolumn{4}{c|}{$17.9743$} \\ \hline
        \multirow{2}{*}{$\overline{Y}_0^{\Delta, \hat{\theta}}$} & $17.5165$ $(\num{5.79e-01})$ & $17.5915$ $(\num{8.68e-01})$ & $18.1236$ $(\num{6.73e-01})$ & $17.7246$ $(\num{5.55e-01})$ \\
        & $17.6329$ $(\num{3.65e-01})$ & $17.4472$ $(\num{1.03e+00})$ & $17.9746$ $(\num{1.04e-01})$ & $17.7875$ $(\num{2.70e-01})$\\ \hline    
        \multirow{2}{*}{$\overline{{\tilde{\varepsilon}}}^{y, r}_0$} & $\num{1.69e-03}$ $(\num{3.35e-03})$ & $\num{2.79e-03}$ $(\num{7.40e-03})$ & $\num{1.47e-03}$ $(\num{1.32e-03})$ & $\num{1.14e-03}$ $(\num{1.80e-03})$ \\
        & $\num{7.74e-04}$ $(\num{1.50e-03})$ & $\num{4.12e-03}$ $(\num{1.12e-02})$ & $\num{3.33e-05}$ $(\num{4.38e-05})$ & $\num{3.33e-04}$ $(\num{6.55e-04})$ \\ \hline    
        \multirow{2}{*}{$\overline{\tau}$} & $\num{5.56e+02}$ & $\num{3.54e+03}$ & $\num{4.31e+04}$ & $\num{1.67e+05}$ \\
         & $\num{4.59e+02}$ & $\num{3.23e+03}$ & $\num{4.17e+04}$ & $\num{1.65e+05}$ \\ \hline   
   \end{tabular}
  \end{center}
\caption{Mean approximation of $Y_0$, its mean relative MSE from DBDP and DLBDP schemes and their average runtimes in Example~\ref{ex2} for $d=50$ and $N \in \{2, 8, 32, 64\}$. The STD of the approximations of $Y_0$ and its relative MSE values are given in the brackets.}
\label{tab3} 
}
\end{table}
We observe that our scheme provides higher accurate approximations of $Y_0$ for the $50$-dimensional nonlinear BSDE in Example~\ref{ex2} compared to DBDP scheme, resulting in smaller relative MSE value, for shorter computation time. Note that the mean relative MSE can be further reduced by increasing the number of hidden neurons or layers provided that the optimization error is sufficiently small.

\subsection{The Hamilton-Jacobi-Bellman equation}
\label{subsec64}
The next example is a Hamilton-Jacobi-Bellman (HJB) equation which admits a semi-explicit solution~\cite{weinan2017deep}. In finance, specifically portfolio optimization, solving the HJB equation provides insights into the optimal investment strategy that maximizes expected utility of the investors terminal wealth. Hence, the process $Y$ is related to the wealth of the portfolio and the process $Z$ the holding on each asset.
\begin{example}
The high-dimensional HJB BSDE reads
\begin{equation*}
    \begin{split}
        \left\{
            \begin{array}{rcl}
                dX_t & = & b \, dW_t,\\
                X_0 & = & x_0,\\
                -dY_t & = & -\sum_{k=1}^{d}\left(\frac{Z_t^k}{b}\right)^2\,dt-Z_t \,dW_t,\\  
                Y_T & = & g(X_T).
            \end{array}
        \right.
    \end{split}
\end{equation*}
\label{ex3}
\end{example}
This BSDE admits the semi-explicit solution as given in~\cite{weinan2017deep}
$$
Y_t = u(t, X_t) = -\ln\left( \mathbb{E} \left[ \exp\left( -g( X_t +  b\left( W_T - W_t \right) ) \right) \right] \right).
$$
The semi-explicit solution of $\left( Z_t, \Gamma_t \right)$ is calculated using relations~\eqref{eq17}. Note that it is quite time consuming to approximate highly accurate pathwise reference solutions $\left( Y_t, Z_t, \Gamma_t \right)$ for $t \in [0, T]$. Hence, we only calculate a reference solution at $t_0$. We set $T=0.5$, $d=50$, $X_0 = \mathbf{1}_d$, $b = \sqrt{0.2}$ and $g(x) = \ln\left( \frac{1}{2}\left( 1 + \left\vert x \right\vert^2 \right) \right)$. Using $10^7$ Borwnian motion samples and $50$ independent runs, we calculate the mean approximations of $\left(Y_0, Z_0, \Gamma_0 \right)$ and use as reference values to test the accuracy of the DBDP and DLBDP schemes. In Table~\ref{tab4}, we report the relative MSE values at $t_0$ for each process, the corresponding STD (given in the brackets) and the algorithm average runtime using $N \in \{ 2, 8, 32, 64\}$. 
\begin{table}[htb!]
{\footnotesize
\begin{center}
  \begin{tabular}{| c | c | c | c | c |}
  \hline
    \multirow{3}{*}{Metric} & N = 2 & N = 8 & N = 32 & N = 64\\ \
    & DBDP & DBDP & DBDP & DBDP\\ 
    & DLBDP & DLBDP & DLBDP & DLBDP\\ \hline
        \multirow{2}{*}{$\overline{{\tilde{\varepsilon}}}^{y, r}_0$} & $\num{5.68e-07}$ $(\num{3.14e-07})$ & $\num{2.23e-07}$ $(\num{1.78e-07})$ & $\num{2.22e-07}$ $(\num{1.61e-07})$ & $\num{2.36e-07}$ $(\num{2.83e-07})$ \\
        & $\num{3.77e-07}$ $(\num{3.17e-07})$ & $\num{2.12e-07}$ $(\num{2.05e-07})$ & $\num{8.26e-08}$ $(\num{1.16e-07})$ & $\num{1.43e-07}$ $(\num{1.47e-07})$ \\ \hline    
        \multirow{2}{*}{$\overline{{\tilde{\varepsilon}}}^{z, r}_0$} & $\num{2.57e-04}$ $(\num{6.23e-05})$ & $\num{5.61e-04}$ $(\num{1.66e-04})$ & $\num{6.57e-04}$ $(\num{1.63e-04})$ & $\num{7.39e-04}$ $(\num{1.33e-04})$ \\
        & $\num{6.37e-05}$ $(\num{8.55e-06})$ & $\num{1.04e-04}$ $(\num{1.90e-05})$ & $\num{8.60e-05}$ $(\num{1.74e-05})$ & $\num{9.39e-05}$ $(\num{1.89e-05})$ \\ \hline    
        \multirow{2}{*}{$\overline{{\tilde{\varepsilon}}}^{\gamma, r}_0$} & $\num{9.34e-01}$ $(\num{5.67e-03})$ & $\num{9.70e-01}$ $(\num{1.39e-02})$ & $\num{8.13e-01}$ $(\num{9.51e-03})$ & $\num{8.42e-01}$ $(\num{4.27e-03})$ \\
        & $\mathbf{\num{3.04e-04}}$ $(\mathbf{\num{1.30e-05}})$ & $\mathbf{\num{8.35e-04}}$ $(\mathbf{\num{2.91e-05}})$ & $\mathbf{\num{1.13e-03}}$ $(\mathbf{\num{3.24e-05}})$ & $\mathbf{\num{1.52e-03}}$ $(\mathbf{\num{4.08e-05}})$ \\ \hline    
        \multirow{2}{*}{$\overline{\tau}$} & $\num{4.75e+02}$ & $\num{2.77e+03}$ & $\num{3.03e+04}$ & $\num{1.14e+05}$ \\
         & $\num{3.65e+02}$ & $\num{2.43e+03}$ & $\num{2.88e+04}$ & $\num{1.15e+05}$ \\ \hline   
   \end{tabular}
  \end{center}
\caption{Mean relative MSE values of $\left(Y_0, Z_0, \Gamma_0 \right)$ from DBDP and DLBDP schemes and their average runtimes in Example~\ref{ex3} for $N \in \{2, 8, 32, 64\}$. The STD of the relative MSE values at $t_0$ is given in the brackets.}
\label{tab4} 
}
\end{table}
Our scheme consistently outperforms the DBDP scheme in approximating each process, particularly for $\Gamma_0$, where the DBDP scheme yields high mean relative MSE values.

\subsection{The Black-Scholes extended with local volatility}
\label{subsec65}
Our final example is taken from~\cite{ruijter2016fourier} in order to demonstrate the effectiveness of our scheme in case of a time dependent diffusion function. Consider an European call option as in Example~\ref{ex1}, where each underlying asset follows a GBM with time-dependent drift and diffusion. 
\begin{example}
The high-dimenisonal Black-Scholes BSDE with local volatility reads~\cite{ruijter2016fourier}
\begin{equation*}
    \begin{split}
    \left\{
        \begin{array}{rcl}
             dX_t &=&  a(t) X_t\,dt + b(t) X_t \,dW_t, \\
             X_0 &=& x_0,\\  
           -dY_t &=& - \left( R Y_t + \sum_{k=1}^d  \frac{ a(t) - R + \delta}{b(t)} Z_t^k\right)\,dt- Z_t \,dW_t,\\ 
   		   	   Y_T &=& \left(\prod_{k=1}^d \left(X_T^k\right)^{c_k}-K\right)^+,
        \end{array}
    \right. \\ 
    \end{split}
\end{equation*}
\label{ex4}
\end{example}
 where for $a(t)$ and $b(t)$ we choose the following periodic functions
 \begin{align*}
     a(t) & = a_0 + a_1\sin\left( \frac{2\pi}{C_1} t\right) + a_2\sin\left( \frac{2\pi}{C_2} t\right),\\
    b(t) & = b_0 + b_1\sin\left( \frac{2\pi}{C_1} t\right) + b_2\sin\left( \frac{2\pi}{C_2} t\right).
 \end{align*}
The exact solution of this local volatility model is given by the Black-Scholes formula with volatility parameter $ \bar{b} =  \sqrt{\frac{1}{T-t} \int_{t}^T b(s)^2 ds}$. More precisely, the exact solution is given by~\eqref{eq55} with
$$\check{b} = \sum_{k=1}^d (\bar{b} c_k)^2,\quad \check{\delta} = \sum_{k=1}^d c_k\left(\delta_k + \frac{\bar{b}^2}{2} \right) - \frac{\check{b}^2}{2}, \quad Z_t^k = \frac{\partial u}{\partial x_k} b(t) X_t^k.$$
We apply the ln-transformation in this example, which is similar as in the case of Example~\ref{ex1}. Moreover, we set $T = 0.25$, $d=50$ and the other following parameter values
\begin{align*}
    & X_0 = 100, K = 100, R = 0.1, a_0 = 0.2, a_1 = 0.1, a_2 = 0.02,\\
    & c_k = \frac{1}{d}, \delta = 0, C_1 = 1, C_2 = 0.25, b_0 = 0.25, b_1 = 0.125, b_2 = 0.025. 
\end{align*}
Using $N=32$, the mean MSE values for each process over discrete domain $\Delta$ are visualized in Figure~\ref{fig5} for the testing sample. The STD of the MSE values is displayed in the shaded area.
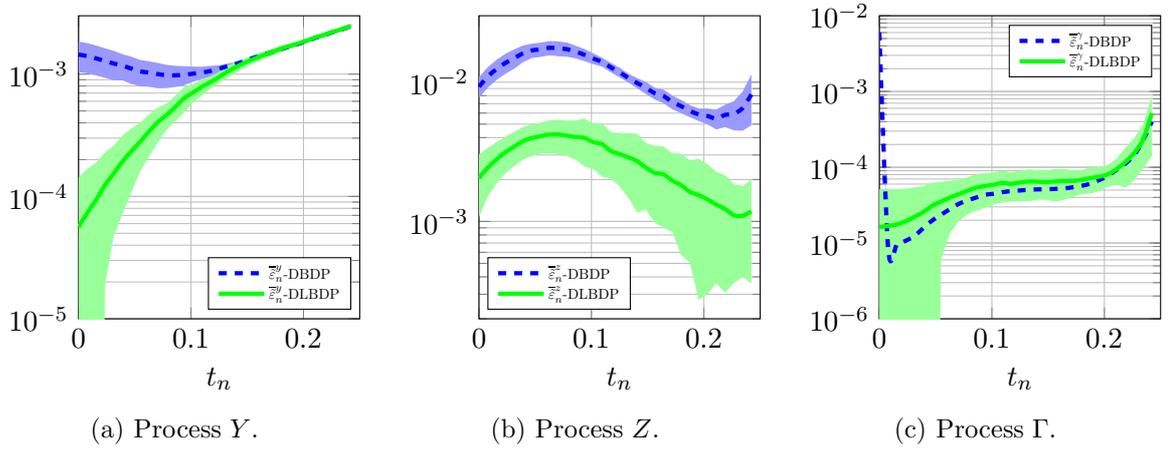
\begin{figure}[tbhp]
	\centering
	\subfloat[Process $Y$.]{
        \pgfplotstableread{"Figures/Example4/d50/vareps_y_over_t.dat"}{\table}        
        \begin{tikzpicture} 
            \begin{axis}[
                xmin = 0, xmax = 0.25,
                ymin = 1e-5, ymax = 3e-3,
                xtick distance = 0.1,
                ymode=log, 
                grid = both,
                width = 0.33\textwidth,
                height = 0.35\textwidth,
                xlabel = {$t_n$},
                legend cell align = {left},
                legend pos = south east,
                legend style={nodes={scale=0.5, transform shape}}]
                \addplot[smooth, ultra thick, blue, dashed] table [x = 0, y =1] {\table}; 
                \addplot[smooth, ultra thick, green, solid] table [x = 0, y =4] {\table}; 
                \addplot [name path=upper,draw=none] table[x=0,y=2] {\table};
                \addplot [name path=lower,draw=none] table[x=0,y=3] {\table};
                \addplot [fill=blue!40] fill between[of=upper and lower];           
                \addplot [name path=upper,draw=none] table[x=0,y=5] {\table};
                \addplot [name path=lower,draw=none] table[x=0,y=6] {\table};
                \addplot [fill=green!40] fill between[of=upper and lower];                
                \legend{
                   $\overline{{\tilde{\varepsilon}}}^{y}_n$-DBDP,
                   $\overline{{\tilde{\varepsilon}}}^{y}_n$-DLBDP
                }
            \end{axis}
            \end{tikzpicture}
		\label{fig5a}
	} 
	\subfloat[Process $Z$.]{ \pgfplotstableread{"Figures/Example4/d50/vareps_z_over_t.dat"}{\table}
        \begin{tikzpicture} 
            \begin{axis}[
                xmin = 0, xmax = 0.25,
                ymin = 2e-4, ymax = 3e-2,
                xtick distance = 0.1,
                ymode=log,
                grid = both,
                width = 0.33\textwidth,
                height = 0.35\textwidth,
                xlabel = {$t_n$},
                legend cell align = {left},
                legend pos = south west,
                legend style={nodes={scale=0.5, transform shape}}]
                \addplot[smooth, ultra thick, blue, dashed] table [x = 0, y =1] {\table}; 
                \addplot[smooth, ultra thick, green, solid] table [x = 0, y =4] {\table}; 
                \addplot [name path=upper,draw=none] table[x=0,y=2] {\table};
                \addplot [name path=lower,draw=none] table[x=0,y=3] {\table};
                \addplot [fill=blue!40] fill between[of=upper and lower];           
                \addplot [name path=upper,draw=none] table[x=0,y=5] {\table};
                \addplot [name path=lower,draw=none] table[x=0,y=6] {\table};
                \addplot [fill=green!40] fill between[of=upper and lower];                
                \legend{
                   $\overline{{\tilde{\varepsilon}}}^{z}_n$-DBDP,
                   $\overline{{\tilde{\varepsilon}}}^{z}_n$-DLBDP
                }
            \end{axis}
            \end{tikzpicture}
		\label{fig5b}
	} 
        \subfloat[Process $\Gamma$.]{ \pgfplotstableread{"Figures/Example4/d50/vareps_gamma_over_t.dat"}{\table}
        \begin{tikzpicture} 
            \begin{axis}[
                xmin = 0, xmax = 0.25,
                ymin = 1e-6, ymax = 1e-2,
                xtick distance = 0.1,
                ymode=log,
                grid = both,
                width = 0.33\textwidth,
                height = 0.35\textwidth,
                xlabel = {$t_n$},
                legend cell align = {left},
                legend pos = north east,
                legend style={nodes={scale=0.5, transform shape}}]
                \addplot[smooth, ultra thick, blue, dashed] table [x = 0, y =1] {\table}; 
                \addplot[smooth, ultra thick, green, solid] table [x = 0, y =4] {\table}; 
                \addplot [name path=upper,draw=none] table[x=0,y=2] {\table};
                \addplot [name path=lower,draw=none] table[x=0,y=3] {\table};
                \addplot [fill=blue!40] fill between[of=upper and lower];           
                \addplot [name path=upper,draw=none] table[x=0,y=5] {\table};
                \addplot [name path=lower,draw=none] table[x=0,y=6] {\table};
                \addplot [fill=green!40] fill between[of=upper and lower];                
                \legend{
                   $\overline{{\tilde{\varepsilon}}}^{\gamma}_n$-DBDP,
                   $\overline{{\tilde{\varepsilon}}}^{\gamma}_n$-DLBDP
                }
            \end{axis}
            \end{tikzpicture}
		\label{fig5c}
	} 
    \caption{Mean MSE values of the processes $\left(Y, Z, \Gamma \right)$ from DBDP and DLBDP schemes over the discrete time points $\{t_n\}_{n=0}^{N-1}$ using the testing sample in Example~\ref{ex4} for $N = 32$. The STD of MSE values is given in the shaded area.}
\label{fig5}
\end{figure}
Compared to previous examples, we notice significant improvements not only in approximating the process $Z$ but also the process $Y$ through our approach. In the case of the process $\Gamma$, such improvements are evident only near $t_0$. In Table~\ref{tab5}, we report the mean relative MSE values at $t_0$ for each process from both schemes, using $N \in \{2, 8, 16, 32\}$. The corresponding STD is given in the brackets. The average runtime of the algorithms are also included.
\begin{table}[htb!]
{\footnotesize
\begin{center}
  \begin{tabular}{| c | c | c | c | c |}
  \hline
    \multirow{3}{*}{Metric} & N = 2 & N = 8 & N = 16 & N = 32\\ \
    & DBDP & DBDP & DBDP & DBDP\\ 
    & DLBDP & DLBDP & DLBDP & DLBDP\\ \hline
        \multirow{2}{*}{$\overline{{\tilde{\varepsilon}}}^{y, r}_0$} & $\num{9.00e-03}$ $(\num{4.33e-04})$ & $\num{5.93e-03}$ $(\num{5.42e-04})$ & $\num{2.05e-03}$ $(\num{2.23e-04})$ & $\num{5.67e-04}$ $(\num{1.59e-04})$ \\
        & $\num{1.89e-04}$ $(\num{5.52e-05})$ & $\num{6.84e-06}$ $(\num{1.01e-05})$ & $\num{3.41e-05}$ $(\num{5.64e-05})$ & $\num{2.22e-05}$ $(\num{3.39e-05})$ \\ \hline    
        \multirow{2}{*}{$\overline{{\tilde{\varepsilon}}}^{z, r}_0$} & $\num{1.77e-01}$ $(\num{1.79e-03})$ & $\num{2.76e-02}$ $(\num{1.48e-03})$ & $\num{6.73e-03}$ $(\num{6.17e-04})$ & $\num{1.58e-03}$ $(\num{2.68e-04})$ \\
        & $\num{1.14e-01}$ $(\num{9.54e-04})$ & $\num{9.58e-03}$ $(\num{5.82e-04})$ & $\num{1.60e-03}$ $(\num{5.51e-04})$ & $\num{3.50e-04}$ $(\num{1.68e-04})$ \\ \hline    
        \multirow{2}{*}{$\overline{{\tilde{\varepsilon}}}^{\gamma, r}_0$} & $\num{1.00e+00}$ $(\num{1.52e-03})$ & $\num{1.00e+00}$ $(\num{1.38e-04})$ & $\num{1.00e+00}$ $(\num{2.16e-04})$ & $\num{1.00e+00}$ $(\num{4.81e-04})$ \\
        & $\mathbf{\num{4.72e-01}}$ $(\mathbf{\num{4.22e-03}})$ & $\mathbf{\num{1.73e-03}}$ $(\mathbf{\num{8.26e-04}})$ & $\mathbf{\num{3.71e-03}}$ $(\mathbf{\num{6.75e-03}})$ & $\mathbf{\num{2.72e-03}}$ $(\mathbf{\num{5.84e-03}})$ \\ \hline    
        \multirow{2}{*}{$\overline{\tau}$} & $\num{1.39e+03}$ & $\num{7.13e+03}$ & $\num{2.18e+04}$ & $\num{8.32e+04}$ \\
         & $\num{1.23e+03}$ & $\num{6.65e+03}$ & $\num{2.09e+04}$ & $\num{8.18e+04}$ \\ \hline   
   \end{tabular}
  \end{center}
\caption{Mean relative MSE values of $\left(Y_0, Z_0, \Gamma_0 \right)$ from DBDP and DLBDP schemes and their average runtimes in Example~\ref{ex4} for $N \in \{2, 8, 16, 32\}$. The STD of the relative MSE values at $t_0$ is given in the brackets.}
\label{tab5} 
}
\end{table}
Our scheme gives the smallest relative MSE values for each process for lower runtime. In this example, the improvement in approximating $Y_0$ is more evident than in previous examples.

\section{Conclusions}
\label{sec7}
In this work, we introduce a novel backward scheme which utilizes the differential deep learning approach to solve high-dimensional nonlinear BSDEs. By applying Malliavin calculus, we transform the BSDEs into a differential deep learning problem. This transformation results in a system of BSDEs that requires the estimation of the solution, its gradient, and the Hessian matrix, given by the triple of processes $\left(Y, Z, \Gamma \right)$ in the BSDE system. To approximate this solution triple, we discretize the integrals within the system using the Euler-Maruyama method and parameterize their discrete version using DNNs. The DNN parameters are iteratively optimized backwardly at each time step by minimizing a differential learning type loss function, constructed as a weighted sum of the dynamics of the discretized BSDE system. An error analysis is conducted to demonstrate the convergence of the proposed algorithm. Our formulation provides additional information to the SGD method to give more accurate approximations compared to deep learning-based approaches, as our loss function includes not only the dynamics of the process $Y$ but also $Z$. The introduced differential deep learning-based approach can be used to other deep learning based schemes, e.g., ~\cite{weinan2017deep,raissi2024forward,kapllani2024deep}. The proficiency of our algorithm in terms of accuracy and computational efficiency is demonstrated through numerous numerical experiments involving pricing and hedging nonlinear options up to $50$ dimensions. The proposed algorithm holds promise for applications in pricing and hedging financial derivatives in high-dimensional settings.

\bibliographystyle{siamplain}
\bibliography{bibfile}

\end{document}